\newtheorem{assumption}{Assumption}[section]
\renewcommand\@biblabel[1]{#1.}
\newcommand{\R}{\mathbb{R}}
\newcommand{\Rext}{\R\cup\{+\infty\}}
\newcommand{\abs}[1]{\left\vert#1\right\vert}
\newcommand{\set}[1]{\left\{#1\right\}}
\newcommand{\sets}[1]{\{#1\}}
\newcommand{\norm}[1]{\left\Vert#1\right\Vert}
\newcommand{\norms}[1]{\Vert#1\Vert}
\newcommand{\Eproof}{\hfill $\square$}
\newcommand{\prox}{\mathrm{prox}}
\newcommand{\proj}{\mathrm{proj}}
\newcommand{\dom}[1]{\mathrm{dom}(#1)}
\newcommand{\zero}[1]{{\boldsymbol{0}}}
\newcommand{\Exps}[2]{\mathbb{E}_{#1}\left[#2\right]}
\newcommand{\Prob}[1]{\mathbf{Prob}\left(#1\right)}
\newcommand{\Ac}{\mathcal{A}}
\newcommand{\Pbb}{\mathbb{P}}
\newcommand{\Bc}{\mathcal{B}}
\newcommand{\Ub}{\mathbf{U}}
\newcommand{\Xc}{\mathcal{X}}
\newcommand{\Sc}{\mathcal{S}}
\newcommand{\Tc}{\mathcal{T}}
\newcommand{\Fc}{\mathcal{F}}
\newcommand{\bsigma}{\boldsymbol{\sigma}}
\newcommand{\iprods}[1]{\langle #1\rangle}
\newcommand{\Exp}[1]{\mathbb{E}\left[#1\right]}
\newcommand{\cExp}[2]{\mathbb{E}\left[#1\mid #2\right]}
\newcommand{\BigO}[1]{\mathcal{O}\left(#1\right)}
\newcommand{\cmark}{\ding{51}}%
\newcommand{\xmark}{\ding{55}}%
\newcommand{\done}{\rlap{$\square$}{\raisebox{2pt}{\large\hspace{1pt}\cmark}}%
\hspace{-2.5pt}}
\newcommand{\wontfix}{\rlap{$\square$}{\large\hspace{1pt}\xmark}}
\newcommand{\nhanp}[1]{{#1}}
\newcommand{\beforesec}{\vspace{-2.25ex}}
\newcommand{\aftersec}{\vspace{-1.5ex}}
\newcommand{\beforesubsec}{\vspace{-2ex}}
\newcommand{\aftersubsec}{\vspace{-1.25ex}}
\newcommand{\beforepara}{\vspace{-1.0ex}}
\begin{document}

\title{ProxSARAH: An Efficient Algorithmic Framework for Stochastic Composite Nonconvex Optimization}

\author{\name Nhan H. Pham$^{\dagger}$ \email nhanph@live.unc.edu
       \AND
       \name Lam M. Nguyen$^{\ddagger}$ \email lamnguyen.mltd@ibm.com \\
       \name Dzung T. Phan$^{\ddagger}$  \email phandu@us.ibm.com \\
       \addr $^{\ddagger}\!$IBM Research, Thomas J. Watson Research Center \\ Yorktown Heights, NY10598, USA
       \AND
       \name Quoc Tran-Dinh$^{\dagger}$ \email quoctd@email.unc.edu \\
       \addr $^{\dagger}\!$Department of Statistics and Operations Research\\
       University of North Carolina at Chapel Hill, Chapel Hill, NC27599, USA.
       }

\editor{~}
\maketitle
\vspace{-4ex}
\begin{abstract}
\vspace{-1ex}
We propose a new stochastic first-order algorithmic framework to solve stochastic  composite nonconvex optimization problems that covers both finite-sum and expectation settings.
Our algorithms rely on the SARAH estimator introduced in \citep{Nguyen2017_sarah} and consist of two steps: a proximal gradient  and an averaging step making them different from existing nonconvex proximal-type algorithms.
The algorithms only require an average smoothness assumption of the nonconvex objective term and additional bounded variance assumption if applied to expectation problems.
They work with both  constant and adaptive step-sizes, while allowing single sample and mini-batches. 
In all these cases,  we prove that our algorithms can  achieve the best-known complexity bounds.
One key step of our methods is new constant and adaptive step-sizes that help to achieve desired complexity bounds while improving practical performance.
Our constant step-size is much larger than existing methods including proximal SVRG schemes in the single sample case. 
We also specify the algorithm to the non-composite case that covers  existing state-of-the-arts in terms of complexity bounds.
Our  update also allows one to trade-off between step-sizes and mini-batch sizes to improve performance.
We test the proposed algorithms on two composite nonconvex problems and neural networks using several well-known datasets. 

\noindent\textbf{The first version of this paper was online on Arxiv on February 15, 2019.} 
\vspace{-2ex}
\end{abstract}

\begin{keywords}
Stochastic proximal gradient descent; optimal convergence rate; composite nonconvex optimization; finite-sum minimization; expectation minimization.
\end{keywords}

\beforesec
\section{Introduction}\label{sec:intro}
\aftersec 
In this paper, we consider the following stochastic composite, nonconvex, and possibly nonsmooth optimization problem:
\begin{equation}\label{eq:sopt_prob}
\min_{w\in\R^d}\Big\{ F(w) := f(w) + \psi(w) \equiv  \Exp{f(w; \xi)} + \psi(w) \Big\},
\end{equation}
where $f(w) := \Exp{f(w; \xi)}$ is the expectation of a stochastic function $f(w; \xi)$ depending on a random vector $\xi$ in a given probability space $(\Omega,\Pbb)$, and $\psi : \R^d\to\Rext$ is a proper, closed, and convex function.

As a special case of \eqref{eq:sopt_prob}, if $\xi$ is a uniformly random vector defined on a finite support set $\Omega := \set{\xi_1, \xi_2, \cdots, \xi_n}$, then \eqref{eq:sopt_prob} reduces to the following composite nonconvex finite-sum minimization problem:
\vspace{-1ex}
\begin{equation}\label{eq:finite_sum}
\min_{w\in\R^d}\Big\{ F(w) := f(w) + \psi(w) \equiv \frac{1}{n}\sum_{i=1}^nf_i(w) + \psi(w) \Big\},
\vspace{-1ex}
\end{equation}
where $f_i(w) := f(w; \xi_i)$ for $i=1,\cdots, n$.
Problem \eqref{eq:finite_sum} is  often referred to as a regularized empirical risk minimization in machine learning and finance.

\beforepara
\paragraph{Motivation:}
Problems \eqref{eq:sopt_prob} and \eqref{eq:finite_sum} cover a broad range of applications in machine learning and statistics, especially in neural networks, see, e.g. \citep{Bottou1998,bottou2010large,Bottou2018,goodfellow2016deep,sra2012optimization}. 
Hitherto, state-of-the-art numerical optimization methods for solving these problems rely on stochastic approaches, see, e.g. \citep{johnson2013accelerating,schmidt2017minimizing,Shapiro2009,SAGA}.
In the convex case, both non-composite and composite settings  \eqref{eq:sopt_prob} and \eqref{eq:finite_sum}  have been intensively studied with different schemes such as standard stochastic gradient \citep{robbins1951stochastic}, proximal stochastic gradient \citep{ghadimi2013stochastic,Nemirovski2009a}, stochastic dual coordinate descent \citep{shalev2013stochastic}, variance reduction methods (e.g., SVRG and SAGA) \citep{allen2016katyusha,SAGA,johnson2013accelerating,nitanda2014stochastic,schmidt2017minimizing,Xiao2014}, stochastic conditional gradient (Frank-Wolfe) methods \citep{reddi2016stochastic}, and stochastic primal-dual methods \citep{chambolle2017stochastic}.
Thanks to variance reduction techniques, several efficient methods with constant step-sizes have been developed for convex settings that match the lower-bound worst-case complexity \citep{agarwal2010information}.
However, variance reduction methods for nonconvex settings are still limited and heavily focus on the non-composite form of \eqref{eq:sopt_prob} and \eqref{eq:finite_sum}, i.e. $\psi = 0$, and the SVRG estimator.

Theory and stochastic methods for nonconvex problems are still in progress and require substantial effort to obtain efficient algorithms with rigorous convergence guarantees. 
It is shown in \citep{fang2018spider,zhou2019lower} that there is still a gap between the upper-bound complexity in state-of-the-art methods and the lower-bound worst-case complexity for the nonconvex problem \eqref{eq:finite_sum} 	under standard smoothness assumption.
Motivated by this fact, we make an attempt to develop a new algorithmic framework that can reduce  and at least  nearly close this gap in the composite finite-sum setting \eqref{eq:finite_sum}.
In addition to the best-known complexity bounds, we expect to \nhanp{design} practical algorithms \nhanp{advancing} beyond existing methods by providing an adaptive rule to update step-sizes with rigorous complexity analysis.
Our algorithms rely on a recent biased stochastic estimator for the objective gradient, called SARAH, introduced in \citep{Nguyen2017_sarah} for convex problems.

\beforepara
\paragraph{Related work:}
In the nonconvex case, both problems \eqref{eq:sopt_prob} and \eqref{eq:finite_sum} have been intensively studied in recent years with a  vast number of research papers. 
While numerical algorithms for solving the non-composite setting, i.e. $\psi = 0$, are well-developed and have received considerable attention \citep{allen2017natasha2,allen2018neon2,SVRG++,fang2018spider,lei2017non,Nguyen2017_sarahnonconvex,Nguyen2018_iSARAH,Nguyen2019_SARAH,reddi2016proximal,zhou2018stochastic}, methods for composite setting remain limited \citep{reddi2016proximal,wang2018spiderboost}.
In terms of algorithms, \citep{reddi2016proximal} studies a non-composite finite-sum problem as a special case of \eqref{eq:finite_sum} using SVRG estimator from \citep{johnson2013accelerating}.
Additionally, they extend their method to the composite setting by simply applying the proximal operator of $\psi$ as in the well-known forward-backward scheme.
Another related work using SVRG estimator can be found in \citep{li2018simple}.
These algorithms have some limitation as  will be discussed later.
The same technique was applied  in \citep{wang2018spiderboost} to develop other variants for both \eqref{eq:sopt_prob} and \eqref{eq:finite_sum}, but using the SARAH estimator from \citep{Nguyen2017_sarah}.
The authors derive a  large constant step-size, but at the same time control mini-batch size to achieve desired complexity bounds.
Consequently, it has an essential limitation as will also be discussed in Subsection~\ref{subsubsec:mini_batch_step_size}.
Both algorithms achieve the best-known complexity bounds for solving \eqref{eq:sopt_prob} and \eqref{eq:finite_sum}.
In \citep{reddi2016stochastic}, the authors propose a stochastic Frank-Wolfe method that can handle constraints as special cases of \eqref{eq:finite_sum}.
Recently, a stochastic  variance reduction method with momentum was studied in \citep{zhou2019momentum} for solving \eqref{eq:finite_sum} which can be viewed as a modification of SpiderBoost in \citep{wang2018spiderboost}.

Our algorithm remains a variance reduction stochastic method, but it is different from these works at two major points: an additional averaging step and two  different  step-sizes.
Having two step-sizes allows us to flexibly trade-off them and develop an adaptive update rule.
Note that our averaging step looks similar to the robust stochastic gradient method in \citep{Nemirovski2009a}, but fundamentally different since it evaluates the proximal step at the averaging point.
In fact, it is closely related to averaged fixed-point schemes in the literature, see, e.g. \citep{Bauschke2011}. 

In terms of theory, many researchers have focused on theoretical aspects of existing algorithms.
For example, \citep{ghadimi2013stochastic} appears to be one of the first pioneering works studying convergence rates of stochastic gradient descent-type methods for nonconvex and non-composite finite-sum problems.
They later extend it to the composite setting in \citep{ghadimi2016mini}.
\citep{wang2018spiderboost}  also investigate the gradient dominance case, and \citep{karimi2016linear} consider both finite-sum and composite finite-sum under different assumptions.

Whereas many researchers have been trying to improve complexity upper bounds of stochastic first-order methods using different techniques \citep{allen2017natasha2,allen2018neon2,SVRG++,fang2018spider},
other researchers attempt to  construct examples for lower-bound complexity estimates.
In the convex case, there exist numerous research papers including \citep{agarwal2010information,Nemirovskii1983,Nesterov2004}.
In \citep{fang2018spider,zhou2019lower}, the authors have constructed a lower-bound complexity for nonconvex finite-sum problem covered by \eqref{eq:finite_sum}.
They showed that the lower-bound complexity for any stochastic gradient method using only smoothness assumption to achieve an $\varepsilon$-stationary point in expectation  is $\Omega\left({n^{1/2}\varepsilon^{-2}}\right)$ given that the number of objective components $n$ does not exceed $\BigO{\varepsilon^{-4}}$. 

For the expectation problem \eqref{eq:sopt_prob}, the best-known complexity bound to achieve an $\varepsilon$-stationary point in expectation is $\BigO{\sigma\varepsilon^{-3} + \sigma^2\varepsilon^{-2}}$ as shown in \citep{fang2018spider,wang2018spiderboost}, where $\sigma$ is an upper bound of the variance (see Assumption~\ref{as:A3}).
Unfortunately, we have not seen any lower-bound complexity for the nonconvex setting of \eqref{eq:sopt_prob} under standard assumptions in the literature.

\beforepara
\paragraph{Our approach and contribution:}
We exploit the SARAH estimator, a biased stochastic recursive gradient estimator, in \citep{Nguyen2017_sarah}, to design new proximal variance reduction stochastic gradient algorithms to solve both composite  expectation and finite-sum problems  \eqref{eq:sopt_prob} and \eqref{eq:finite_sum}.
The SARAH algorithm is simply a double-loop stochastic gradient method with a flavor of SVRG \citep{johnson2013accelerating}, but using a novel \nhanp{biased} estimator that is different from SVRG. 
SARAH is a recursive method as SAGA \citep{SAGA}, but can avoid the major issue of storing gradients as in SAGA.
Our method will rely on the SARAH estimator as in SPIDER and SpiderBoost combining with an averaging proximal-gradient scheme to solve both \eqref{eq:sopt_prob} and \eqref{eq:finite_sum}.

The contribution of this paper is a new algorithmic framework that covers different variants with constant and adaptive step-sizes, single sample and mini-batch, and achieves best-known theoretical complexity bounds.
More specifically, our main contribution can be summarized as follows:

\begin{itemize}
\vspace{-1.2ex}
\item[$\mathrm{(a)}$]\textbf{Composite settings:}
We propose a general stochastic variance reduction framework relying on the SARAH estimator to solve both expectation and finite-sum problems \eqref{eq:sopt_prob} and \eqref{eq:finite_sum} in composite settings.
We analyze our framework to design appropriate constant step-sizes instead of diminishing step-sizes as in standard stochastic gradient descent methods.
As usual, the algorithm has double loops, where the outer loop can either take full gradient or mini-batch to reduce computational burden in large-scale and expectation settings.
The inner loop can work with single sample or a broad range of mini-batch sizes.

\vspace{-1.2ex}
\item[$\mathrm{(b)}$]\textbf{Best-known complexity:}
In the finite-sum setting \eqref{eq:finite_sum}, our method achieves $\BigO{n + n^{1/2}\varepsilon^{-2}}$ complexity bound to attain an $\varepsilon$-stationary point in expectation under only the smoothness of $f$.
This complexity matches the lower-bound worst-case  complexity in \citep{fang2018spider,zhou2019lower} up to a constant factor when $n \leq \BigO{\varepsilon^{-4}}$.
In the expectation setting \eqref{eq:sopt_prob}, our algorithm requires $\BigO{\sigma^2\varepsilon^{-2} + \sigma\varepsilon^{-3}}$ first-order oracle calls of $f$ to achieve an $\varepsilon$-stationary point in expectation under only the smoothness of $f$ and bounded variance $\sigma^2$.
To the best of our knowledge, this is the best-known complexity so far for  \eqref{eq:sopt_prob} under standard assumptions in both the single sample and mini-batch cases.

\vspace{-1.2ex}
\item[$\mathrm{(c)}$]\textbf{Adaptive step-sizes:}
Apart from constant step-size algorithms, we also specify our framework to obtain adaptive step-size variants for both composite and non-composite settings in both single sample and mini-batch cases.
Our adaptive step-sizes are increasing along the inner iterations rather than diminishing as in stochastic proximal gradient descent methods.
The adaptive variants often outperform the constant step-sizes schemes in several test cases.
\vspace{-1.5ex}
\end{itemize}

Our result covers the non-composite setting in the finite-sum case \citep{Nguyen2019_SARAH}, and matches the best-known complexity in \citep{fang2018spider,wang2018spiderboost} for both problems \eqref{eq:sopt_prob} and \eqref{eq:finite_sum}. 
Since the composite setting covers a broader class of nonconvex problems including convex constraints, we believe that our method has better chance to handle new applications than non-composite methods.
It also allows one to deal with composite problems under different type of regularizers such as sparsity or constraints on weights as in neural network training applications. 

\beforepara
\paragraph{Comparison:}
Hitherto, we have found three different variance reduction algorithms of the stochastic proximal gradient method for nonconvex problems that are most related to our work: proximal SVRG (called ProxSVRG) in  \citep{reddi2016proximal}, ProxSVRG+ in \citep{li2018simple}, and ProxSpiderBoost in \citep{wang2018spiderboost}.
Other methods such as proximal stochastic gradient descent (ProxSGD) scheme \citep{ghadimi2016mini}, ProxSAGA in \citep{reddi2016proximal}, and Natasha variants in \citep{allen2017natasha2} are quite different and already intensively compared in previous works \citep{li2018simple,reddi2016proximal,wang2018spiderboost}, and hence we do not include them here.

In terms of theory, Table~\ref{tbl:SFO_compare} compares different methods for solving \eqref{eq:sopt_prob} and \eqref{eq:finite_sum} regarding the stochastic first-order oracle calls (SFO), the applicability to finite-sum and/or expectation and composite settings, step-sizes, and the use of adaptive step-sizes.

\begin{table}[hpt!]
\newcommand{\cellb}[1]{{\!\!}{\color{blue}#1}{\!\!}}
\newcommand{\cellr}[1]{{\!\!}{\color{red}#1}{\!\!}}
\newcommand{\cell}[1]{{\!\!\!}#1{\!\!\!}}
\begin{center}
\resizebox{\textwidth}{!}{
\begin{tabular}{l ccccc}\toprule
\cell{~~~~~~~~~\bf Algorithms} & \cell{\bf Finite-sum} & \cell{\bf Expectation} & \cell{\bf Composite} & \cell{\bf Step-size} & \cell{{\!\!\!}\bf Adaptive step-size} \\ \toprule
\cell{GD \citep{Nesterov2004}} & \cell{$\BigO{n\varepsilon^{-2}}$} & NA & \cellb{\done} & $\BigO{L^{-1}}$ & \cellb{Yes}  \\ \midrule
\cell{SGD \citep{ghadimi2013stochastic}} & NA & \cell{$\BigO{\sigma^2\varepsilon^{-4}}$} & \cellb{\done}  & $\BigO{L^{-1}}$ & \cellb{Yes}  \\ \midrule
\cell{SVRG/SAGA \citep{reddi2016proximal}} & $\BigO{n + n^{2/3}\varepsilon^{-2}}$ & NA & \cellb{\done}  & \cell{$\BigO{(nL)^{-1}}\to\BigO{L^{-1}}$} & \cellr{No}  \\ \midrule
\cell{SVRG+ \citep{li2018simple}} & \cell{$\BigO{n+n^{2/3}\varepsilon^{-2}}$} & \cell{$\BigO{\sigma^2\varepsilon^{-10/3}}$} & \cellb{\done}  & \cell{$\BigO{(nL)^{-1}}\to\BigO{L^{-1}}$} & \cellr{No} \\ \midrule
\cell{SCSG \citep{lei2017non}} & \cell{$\BigO{n + n^{2/3}\varepsilon^{-2}}$} &\cell{ $\BigO{\sigma^2\varepsilon^{-2}+\sigma\varepsilon^{-10/3}}$} & \cellr{\wontfix}  & \cell{$\BigO{L^{-1}(n^{-2/3}\wedge \varepsilon^{4/3})}$} & \cellr{No} \\ \midrule
\cell{SNVRG \citep{zhou2018stochastic}} & \cell{$\BigO{(n+n^{1/2}\varepsilon^{-2})\log(n)}$} & $\BigO{(\sigma^2\varepsilon^{-2}+\sigma\varepsilon^{-3})\log(\varepsilon^{-1})}$ & \cellr{\wontfix}  & $\BigO{L^{-1}q^{-1/2}}$ & \cellr{No}  \\ \midrule
\cell{SPIDER \citep{fang2018spider}} & \cell{$\BigO{n+n^{1/2}\varepsilon^{-2}}$} & \cell{$\BigO{\sigma^2\varepsilon^{-2}+\sigma\varepsilon^{-3}}$} & \cellr{\wontfix}  & \cell{$\BigO{L^{-1}\varepsilon}$} & \cellb{Yes} \\ \midrule
\cell{SpiderBoost \citep{wang2018spiderboost}} & \cell{$\BigO{n+n^{1/2}\varepsilon^{-2}}$} & \cell{$\BigO{\sigma^2\varepsilon^{-2}+\sigma\varepsilon^{-3}}$} & \cellb{\done}  & \cell{$\BigO{L^{-1}}$} & \cellr{No} \\ \midrule
\cellb{\bf ProxSARAH (This work)} & \cellb{$\BigO{n+{n^{1/2}}\varepsilon^{-2}}$} & \cellb{$\BigO{\sigma^2\varepsilon^{-2} + \sigma \varepsilon^{-3}}$} & \cellb{\done}  & \cellb{$\BigO{L^{-1}m^{-1/2}} \to \BigO{L^{-1}}$} &  \cellb{Yes} \\ 
\bottomrule
\end{tabular}}
\caption{Comparison of results on SFO $($stochastic first-order oracle$)$  complexity for nonsmooth nonconvex optimization $($both non-composite and composite case$)$.
{\small
Here, $m$ is the number of inner iterations $($epoch length$)$ and $\sigma$ is the variance in Assumption~\ref{as:A3}.
Note that all the complexity bounds here must depend on the Lipschitz constant $L$ of the smooth components and $F(\widetilde{w}^0) - F^{\star}$, the difference between the initial objective value $F(\widetilde{w}^0)$ and the lower-bound $F^{\star}$.
For the sake of presentation, we assume that $L = \BigO{1}$ and ignore these quantities in the complexity bounds
}
}\label{tbl:SFO_compare}
\vspace{-4ex}
\end{center}
\end{table}

Now, let us compare in detail our algorithms and four methods: ProxSVRG, ProxSVRG+, SPIDER, and ProxSpiderBoost for solving \eqref{eq:sopt_prob} and \eqref{eq:finite_sum}, or their special cases.

\beforepara
\paragraph{\textbf{Assumptions:}}
In the finite-sum setting \eqref{eq:finite_sum}, ProxSVRG, ProxSVRG+, and ProxSpiderBoost all use the smoothness of each component $f_i$ in \eqref{eq:finite_sum}, which is stronger than the average smoothness in Assumption~\ref{as:A2} stated below.
They did not consider \eqref{eq:finite_sum} under Assumption~\ref{as:A2}.

\beforepara
\paragraph{\textbf{Single sample for the finite-sum case:}}
The performance of gradient descent-type algorithms crucially depends on the step-size (i.e., learning rate).
Let us make a comparison between different methods in terms of step-size for single sample case, and the corresponding complexity bound.
\begin{itemize}
\vspace{-1.5ex}
\item As shown in  \citep[Theorem 1]{reddi2016proximal}, in the single sample case, i.e. the mini-batch size of the inner loop $\hat{b} = 1$,  ProxSVRG for solving \eqref{eq:finite_sum} has a small step-size $\eta = \frac{1}{3Ln}$, and its corresponding complexity is $\BigO{n\varepsilon^{-2}}$, see \citep[Corollary 1]{reddi2016proximal}, which is the same as in standard proximal gradient methods.
\item ProxSVRG+ in \citep[Theorem 3]{li2018simple} is a variant of ProxSVRG, and in the single sample case, it uses a different step-size $\eta =  \min\set{\frac{1}{6L}, \frac{1}{6mL}}$. 
This step-size is only better than that of ProxSVRG if $2m < n$. 
With this step-size, the complexity of ProxSVRG+ remains $\BigO{n^{2/3}\varepsilon^{-2}}$ as in ProxSVRG.

\vspace{-1.5ex}
\item In the non-composite case, SPIDER \citep{fang2018spider} relies on an adaptive step-size $\eta_t := \min\set{\frac{\varepsilon}{L\Vert v_t\Vert\sqrt{n}}, \frac{1}{2L\sqrt{n}}}$, where $v_t$ is the SARAH stochastic estimator. 
Clearly, this step-size is very small if the target accuracy $\varepsilon$ is small, and/or $\Vert v_t\Vert$ is large.
However, SPIDER achieves $\BigO{n+ n^{1/2}\varepsilon^{-2}}$ complexity bound, which is nearly optimal.
Note that this step-size is problem dependent since it depends on $v_t$.
We also emphasize that SPIDER did not consider the composite problems.

\vspace{-1.5ex}
\item In our constant step-size ProxSARAH variants, we use two step-sizes: averaging step-size $\gamma = \frac{\sqrt{2}}{\sqrt{3m}L}$ and proximal-gradient step-size $\eta = \frac{2\sqrt{3m}}{4\sqrt{3m} + \sqrt{2}}$, and their product presents a combined step-size, which is $\hat{\eta} := \gamma\eta = \frac{2}{L(4\sqrt{3m} + \sqrt{2})}$ (see \eqref{eq:grad_representation} for our definition of step-size).
Clearly, our step-size $\hat{\eta}$ is much larger than that of both ProxSVRG and ProxSVRG+.
It can be larger than that of SPIDER if $\varepsilon$ is small and $\Vert v_t\Vert$ is large.
With these step-sizes, our complexity bound is $\BigO{n+ n^{1/2}\varepsilon^{-2}}$, and if $\varepsilon \leq \BigO{n^{-1/4}}$, then it reduces to $\BigO{n^{1/2}\varepsilon^{-2}}$, which is also nearly optimal.

\vspace{-1.5ex}
\item 
As we can observe from Algorithm~\ref{alg:prox_sarah} in the sequel, the number of proximal operator calls in our method remains the same as in ProxSVRG and ProxSVRG+.
\vspace{-1.5ex}
\end{itemize}

\vspace{-1.25ex}
\paragraph{\textbf{Mini-batch for the finite-sum case:}}
Now, we consider the case of using mini-batch.
\begin{itemize}
\vspace{-1.5ex}
\item As indicated in \citep[Theorem 2]{reddi2016proximal}, if we choose the batch size $\hat{b} = \lfloor n^{2/3}\rfloor$ and $m = \lfloor n^{1/3}\rfloor$, then the step-size $\eta$ can be chosen as $\eta = \frac{1}{3L}$, and its complexity is improved up to $\BigO{n + n^{2/3}\varepsilon^{-2}}$ for ProxSVRG.
However, the mini-batch size $n^{2/3}$ is close to the full dataset $n$.

\vspace{-1.5ex}
\item For ProxSVRG+ in \citep{li2018simple}, based on Theorem 1, we need to set \nhanp{$\hat{b} =  \lfloor n^{2/3} \rfloor$}  and $m = \lfloor\sqrt{\hat{b}}\rfloor = \lfloor n^{1/3}\rfloor$ to obtain the best complexity bound for this method, which is $\BigO{n+ n^{2/3}\varepsilon^{-2}}$.
Nevertheless, its step-size is $\eta = \frac{1}{6L}$, which is twice smaller than that of ProxSVRG.
In addition, ProxSVRG requires the bounded variance assumption for \eqref{eq:finite_sum}.

\vspace{-1.5ex}
\item For SPIDER, again in the non-composite setting, if we choose the batch-size $\hat{b} = \lfloor n^{1/2}\rfloor$, then its step-size is $\eta_t := \min\set{\frac{\varepsilon}{L\Vert v_t\Vert}, \frac{1}{2L}}$.
In addition, SPIDER limits the batch size $\hat{b}$ in the range of $[1, n^{1/2}]$, and did not consider larger mini-batch sizes.

\vspace{-1.5ex}
\item For SpiderBoost in \citep{wang2018spiderboost}, it requires to  properly set mini-batch size to achieve $\BigO{n + n^{1/2}\varepsilon^{-2}}$ complexity for solving \eqref{eq:finite_sum}.
More precisely, from \citep[Theorem 1]{wang2018spiderboost}, we can see that one needs to set $m = \lfloor\sqrt{n}\rfloor$ and $\hat{b} = \lfloor\sqrt{n}\rfloor$ to achieve such a complexity.
This mini-batch size can be large if $n$ is large, and less flexible to adjust the performance of the algorithm. 
Unfortunately, ProxSpiderBoost does not have theoretical guarantee for the single sample case.

\vspace{-1.5ex}
\item In our methods, it is flexible to choose the epoch length $m$ and the batch size $\hat{b}$ such that we can obtain different step-sizes and complexity bounds.
Our batch-size $\hat{b}$ can be any value in $[1, n-1]$ for \eqref{eq:finite_sum}.
Given $\hat{b} \in [1, \sqrt{n}]$, we can properly choose $m = \BigO{n/\hat{b}}$  to obtain the best-known complexity bound $\BigO{n+ n^{1/2}\varepsilon^{-2}}$ when $n > \BigO{\varepsilon^{-4}}$ and $\BigO{n^{1/2}\varepsilon^{-2}}$, otherwise. 
More details can be found in Subsection~\ref{subsubsec:mini_batch_step_size}.
\vspace{-1.5ex}
\end{itemize}

\vspace{-1.25ex}
\paragraph{\textbf{Online or expectation problems:}}
For online or expectation problems, a mini-batch is required to evaluate snapshot gradient estimators for the outer loop.
\begin{itemize}
\vspace{-1.5ex}
\item In the online or expectation case \eqref{eq:sopt_prob}, SPIDER in \citep[Theorem 1]{fang2018spider} achieves an $\BigO{\sigma \varepsilon^{-3} + \sigma^2 \varepsilon^{-2}}$ complexity.
In the single sample case, SPIDER's step-size becomes $\eta_t := \min\set{\frac{\varepsilon^2}{2\sigma L\Vert v_t\Vert}, \frac{\varepsilon}{4\sigma L}}$, which can be very small, and depends on $v_t$ and $\sigma$.
Note that $\sigma$ is often unknown or hard to estimate.
Moreover, in early iterations, $\Vert v_t\Vert$ is often large potentially making this method slow.

\vspace{-1.5ex}
\item ProxSpiderBoost in \citep{wang2018spiderboost} achieves the same complexity bound as SPIDER for the composite problem \eqref{eq:sopt_prob}, but requires to set the mini-batch for both outer and inner loops.
The size of these mini-batches has to be fixed a priori in order to use a constant step-size, which is certainly less flexible.
The total complexity of this method is $\BigO{\sigma \varepsilon^{-3} + \sigma^2 \varepsilon^{-2}}$.

\vspace{-1.5ex}
\item 
As shown in Theorem~\ref{th:convergence_composite_expectation}, our complexity is $\BigO{\sigma \varepsilon^{-3}}$ given that $\sigma \leq \BigO{\varepsilon^{-1}}$.
Otherwise, it is $\BigO{\sigma \varepsilon^{-3} + \sigma^2 \varepsilon^{-2}}$, which is the same as in ProxSpiderBoost.
Note that our complexity can \nhanp{be achieved} for both single sample and a wide range of mini-batch sizes as opposed to a predefined mini-batch size of ProxSpiderBoost.
\vspace{-1.5ex}
\end{itemize}

From an algorithmic point of view, our method is fundamentally different from existing methods due to its averaging step and large step-sizes in the composite settings.
Moreover, our methods have more chance to improve \nhanp{the} performance due to the use of adaptive step-sizes and an additional damped step-size $\gamma_t$, and \nhanp{the} flexibility to choose  the epoch length $m$, the inner mini-batch size $\hat{b}$, and the snapshot batch size $b_s$.

\beforepara
\paragraph{Paper organization:}
The rest of this paper is organized as follows.
Section~\ref{sec:assumption_optimality} discusses the fundamental assumptions and optimality conditions.
Section~\ref{sec:alg_section} presents the main algorithmic framework and its convergence results for two settings.
Section~\ref{sec:extensions} considers extensions and special cases of our algorithms.
Section~\ref{sec:num_experiments} provides some numerical examples to verify our methods and compare them with existing state-of-the-arts.

\beforesec
\section{Mathematical tools and preliminary results}\label{sec:assumption_optimality}
\aftersec
Firstly, we recall some basic notation and concepts in optimization, which  can be found in  \citep{Bauschke2011,Nesterov2004}.
Next, we state our blanket assumptions and discuss the optimality condition of \eqref{eq:sopt_prob} and \eqref{eq:finite_sum}.
Finally, we provide  preliminary results \nhanp{needed} in the sequel.

\beforesubsec
\subsection{Basic notation and concepts}
\aftersubsec
We work with finite dimensional spaces, $\R^d$, equipped with standard inner product $\iprods{\cdot,\cdot}$ and Euclidean norm $\norms{\cdot}$.
Given a function $f : \R^d\to\Rext$, we use $\dom{f} := \set{w\in\R^d \mid f(w) < +\infty}$ to denote its (effective) domain.
If $f$ is proper, closed, and convex, $\partial{f}(w) := \set{v \in \R^d \mid f(z) \geq f(w) + \iprods{v, z - w},~~\forall z\in\dom{f}}$ denotes its subdifferential at $w$, and $\prox_f(w) := \mathrm{arg}\min_{z}\set{ f(z) + (1/2)\norms{z-w}^2 }$ denotes its proximal operator. 
Note that if $f$ is the indicator of a nonempty, closed, and convex set $\Xc$, i.e. $f(w) = \delta_{\Xc}(w)$, then $\prox_f(\cdot) = \proj_{\Xc}(\cdot)$, the projection of $w$ onto $\Xc$.
Any element $\nabla{f}(w)$ of $\partial{f}(w)$ is called a subgradient of $f$ at $w$. If $f$ is differentiable at $w$, then $\partial{f}(w) = \set{\nabla{f}(w)}$, the gradient of $f$ at $w$.
A continuous differentiable function $f : \R^d\to\R$ is said to be $L_f$-smooth if $\nabla{f}$ is Lipschitz continuous on its domain, i.e. $\norms{\nabla{f}(w) - \nabla{f}(z)} \leq L_f\norms{w - z}$ for $w,z\in\dom{f}$.
We use $\Ub_p(S)$ to denote a finite set $S := \set{s_1, s_2, \cdots, s_n}$ equipped with a probability distribution $p$ over $S$. If $p$ is uniform, then we simply use $\Ub(S)$.
For any real number $a$, $\lfloor a\rfloor$ denotes the largest integer less than or equal to $a$.
We use $[n]$ to denote the set $\set{1,2,\cdots, n}$.

\beforesubsec
\subsection{Fundamental assumptions}
\aftersubsec
To develop numerical methods for solving \eqref{eq:sopt_prob} and \eqref{eq:finite_sum}, we rely on some basic assumptions usually used in stochastic optimization methods.
\begin{assumption}[Bounded from below]\label{as:A1}
Both problems \eqref{eq:sopt_prob} and \eqref{eq:finite_sum} are bounded from below.
That is $F^{\star} := \inf_{w\in\R^d}F(w) > -\infty$.
Moreover, $\dom{F} := \dom{f}\cap\dom{\psi} \neq\emptyset$.
\end{assumption}
This assumption usually holds in practice since $f$ often represents a loss function which is nonnegative or bounded from below. 
In addition, the regularizer $\psi$ is also nonnegative or bounded from below, and its domain intersects $\dom{f}$.

Our next assumption is the smoothness of $f$ with respect to the argument $w$.
\begin{assumption}[$L$-average smoothness]\label{as:A2}
In the expectation setting \eqref{eq:sopt_prob}, for any realization of $\xi\in\Omega$, $f(\cdot; \xi)$ is $L$-smooth $($on average$)$, i.e. $f(\cdot;\xi)$ is continuously differentiable and its gradient $\nabla_w{f}(\cdot;\xi)$ is Lipschitz continuous with the same Lipschitz constant $L \in (0, +\infty)$, i.e.:
\begin{equation}\label{eq:L_smooth}
\Exps{\xi}{\norms{\nabla_w{f}(w;\xi) - \nabla_w{f}(\hat{w};\xi)}^2} \leq L^2\norms{w - \hat{w}}^2,~~w,\hat{w}\in\dom{f}.
\end{equation}
In the finite-sum setting \eqref{eq:finite_sum}, the condition \eqref{eq:L_smooth} reduces to
\begin{equation}\label{eq:L_smooth_fi}
\frac{1}{n}\sum_{i=1}^n\norms{\nabla{f_i}(w) - \nabla{f_i}(\hat{w})}^2 \leq L^2\norms{w - \hat{w}}^2,~~w,\hat{w}\in\dom{f},~~i=1,\cdots, n.
\end{equation}
\end{assumption}
We can write \eqref{eq:L_smooth_fi} as $\Exps{i}{\norms{\nabla{f_i}(w) - \nabla{f_i}(\hat{w})}^2} \leq L^2\norms{w - \hat{w}}^2$.
Note that \eqref{eq:L_smooth_fi} is weaker than assuming that each component $f_i$ is $L_i$-smooth, i.e., $\Vert \nabla{f_i}(w) - \nabla{f_i}(\hat{w})\Vert \leq L_i\Vert w - \hat{w}\Vert$ for all $w, \hat{w}\in\dom{f}$.
Indeed, the individual $L_i$-smoothness implies \eqref{eq:L_smooth_fi} with $L^2 := \frac{1}{n}\sum_{i=1}^nL_i^2$.
Conversely, if \eqref{eq:L_smooth_fi} holds, then $\Vert \nabla{f_i}(w) - \nabla{f_i}(\hat{w})\Vert^2 \leq \sum_{i=1}\Vert \nabla{f_i}(w) - \nabla{f_i}(\hat{w})\Vert^2 \leq nL^2\Vert w - \hat{w}\Vert^2$ for $i=1,\cdots, n$.
Therefore, each component $f_i$ is $\sqrt{n}L$-smooth, which is larger than \eqref{eq:L_smooth_fi} within a factor of  $\sqrt{n}$ in the worst-case. 
We emphasize that ProxSVRG, ProxSVRG+, and ProxSpiderBoost all require the $L$-smoothness of each component $f_i$ in \eqref{eq:finite_sum}.

It is well-known that the $L$-smooth condition leads to the following bound
\begin{equation}\label{eq:upper_bound}
\Exps{\xi}{f(\hat{w};\xi)} \leq \Exps{\xi}{f(w;\xi)} + \Exps{\xi}{\iprods{\nabla_w{f}(w;\xi), \hat{w} - w}} + \frac{L}{2}\norms{\hat{w} - w}^2,~~w,\hat{w}\in\dom{f}.
\end{equation}
Indeed, from \eqref{eq:L_smooth}, we have
\begin{equation*}
\begin{array}{ll}
\norms{\nabla{f}(w) - \nabla{f}(\hat{w})}^2 &= \norms{\Exps{\xi}{\nabla_w{f}(w;\xi) - \nabla_w{f}(\hat{w};\xi)}}^2 \vspace{1ex}\\
& \leq \Exps{\xi}{\norms{\nabla_w{f}(w;\xi) - \nabla_w{f}(\hat{w};\xi)}^2} \vspace{1ex}\\
& \leq L^2\norms{w - \hat{w}}^2,
\end{array}
\end{equation*}
which shows that $\norms{\nabla{f}(w) - \nabla{f}(\hat{w})} \leq L\norms{w - \hat{w}}$.
Hence, using either \eqref{eq:L_smooth} or \eqref{eq:L_smooth_fi}, we get
\begin{equation}\label{eq:upper_bound_fw}
f(\hat{w}) \leq f(w) + \iprods{\nabla{f}(w), \hat{w} - w} + \frac{L}{2}\norms{\hat{w} - w}^2,~~w,\hat{w}\in\dom{f}.
\end{equation}
In the expectation setting \eqref{eq:sopt_prob}, we need the following bounded variance condition:

\begin{assumption}[Bounded variance]\label{as:A3}
For the expectation problem \eqref{eq:sopt_prob}, there exists a uniform constant $\sigma \in (0, +\infty)$ such that
\begin{equation}\label{eq:bounded_variance}
\Exps{\xi}{\norms{\nabla_w{f}(w;\xi) - \nabla{f}(w)}^2} \leq \sigma^2, ~~\forall w\in\R^d.
\end{equation}
\end{assumption}
This assumption is standard in stochastic optimization and often required in almost any solution method for solving \eqref{eq:sopt_prob}, see, e.g. \citep{ghadimi2013stochastic}.
For problem \eqref{eq:finite_sum}, if $n$ is extremely large, passing over $n$ data points is exhaustive or impossible.
We refer to this case as the online case mentioned in \citep{fang2018spider}, and can be cast into Assumption~\ref{as:A3}.
Therefore, we do not consider this case separately.
However, our theory and algorithms developed in this paper do apply to such a setting.

\beforesubsec
\subsection{Optimality conditions}
\aftersubsec
Under Assumption~\ref{as:A1}, we have $\dom{f}\cap\dom{\psi}\neq\emptyset$.
When $f(\cdot;\xi)$ is nonconvex in $w$, the first order optimality condition of \eqref{eq:sopt_prob} can be stated as
\begin{equation}\label{eq:sopt_opt_cond}
0 \in \partial{F}(w^{\star}) \equiv \nabla{f}(w^{\star}) + \partial{\psi}(w^{\star}) \equiv \Exps{\xi}{\nabla_wf(w^{\star};\xi)} + \partial{\psi}(w^{\star}).
\end{equation}
Here, $w^{\star}$ is called a stationary point of $F$. 
We denote $\Sc^{\star}$ the set of all stationary points.
The condition \eqref{eq:sopt_opt_cond} is called the first-order optimality condition, and also holds for \eqref{eq:finite_sum}.

Since $\psi$ is proper, closed, and convex, its proximal operator $\prox_{\eta\psi}$ satisfies the nonexpansiveness, i.e.  $\norms{\prox_{\eta\psi}(w) - \prox_{\eta\psi}(z)} \leq \norms{w - z}$ for all $w, z\in\R^d$.

Now, for any fixed $\eta > 0$, we define the following quantity
\begin{equation}\label{eq:gradient_mapping}
G_{\eta}(w) := \frac{1}{\eta}\big(w - \prox_{\eta\psi}(w - \eta\nabla{f}(w))\big).
\end{equation}
This quantity is called the gradient mapping of $F$ \citep{Nesterov2004}.
Indeed, if $\psi\equiv 0$, then $G_{\eta}(w) \equiv \nabla{f}(w)$, which is exactly the gradient of $f$.
By using $G_{\eta}(\cdot)$, the optimality condition \eqref{eq:sopt_opt_cond} can be equivalently written as
\begin{equation}\label{eq:sopt_opt_fixed_point}
\norms{G_{\eta}(w^{\star})}^2 = 0.
\end{equation}
If we apply gradient-type methods to solve \eqref{eq:sopt_prob} or \eqref{eq:finite_sum}, then we can only aim at finding an $\varepsilon$-approximate stationary point $\widetilde{w}_T$ to $w^{\star}$ in \eqref{eq:sopt_opt_fixed_point} after at most $T$ iterations within a given accuracy $\varepsilon > 0$, i.e.:
\begin{equation}\label{eq:approximate_opt_cond}
\Exp{\norms{G_{\eta}(\widetilde{w}_T)}^2} \leq \varepsilon^2.
\end{equation}
The condition \eqref{eq:approximate_opt_cond} is standard in stochastic nonconvex optimization methods.
Stronger results such as approximate second-order optimality or strictly local minimum require additional assumptions and more sophisticated optimization methods such as cubic regularized Newton-type schemes, see, e.g., \citep{Nesterov2006a}.

\beforesubsec
\subsection{Stochastic gradient estimators}
\aftersubsec
One key step to design a stochastic gradient method for \eqref{eq:sopt_prob} or \eqref{eq:finite_sum} is to query  an estimator for the gradient $\nabla{f}(w)$ at any $w$.
Let us recall some existing stochastic estimators.

\beforepara
\paragraph{Single sample estimators:}
A simple estimator of $\nabla{f}(w)$ can be computed as follows:
\begin{equation}\label{eq:stochastic_estimator}
\widetilde{\nabla}f(w_t) := \nabla_w{f}(w_t;\xi_t),
\end{equation}
where $\xi_t$ is a realization of $\xi$.
This estimator is unbiased, i.e., $\cExp{\widetilde{\nabla}f(w_t)}{\Fc_t} = \nabla{f}(w_t)$, but its variance is fixed for any $w_t$, where $\Fc_t$ is the history of randomness collected up to the $t$-th iteration, i.e.:
\begin{equation}\label{eq:history_Ft}
\Fc_t := \sigma\big(w_0, w_1,\cdots, w_{t} \big).
\end{equation}
This is a $\sigma$-field generated by random variables $\set{w_0, w_1,\cdots, w_{t}}$.
In the finite-sum setting \eqref{eq:finite_sum}, we have $\widetilde{\nabla}f(w_t) := \nabla{f}_{i_t}(w_t)$, where $i_t \sim \Ub([n])$ with $[n] := \set{1,2,\cdots, n}$.

In recent years, there has been  huge interest in designing stochastic estimators with variance reduction properties. 
The first variance reduction method was perhaps proposed in \citep{schmidt2017minimizing} since 2013, and then in \citep{SAGA} for convex optimization.
However, the most well-known method is SVRG introduced by Johnson and Zhang in \citep{johnson2013accelerating} that works for both convex and nonconvex problems.
The SVRG estimator for $\nabla{f}$ in \eqref{eq:finite_sum} is given as
\begin{equation}\label{eq:svrg_estimator}
\widetilde{\nabla}f(w_t) := \nabla{f}(\widetilde{w}) + \nabla{f_{i_t}}(w_t) - \nabla{f_{i_t}}(\widetilde{w}), 
\end{equation}
where $\nabla{f}(\widetilde{w})$ is the full gradient of $f$ at a snapshot point $\widetilde{w}$, and $i_t$ is a uniformly random index in $[n]$.
It is clear that $\cExp{\widetilde{\nabla}f(w_t)}{\Fc_t} = \nabla{f}(w_t)$, which shows that $\widetilde{\nabla}f(w_t)$ is an unbiased estimator of $\nabla{f}(w_t)$.
Moreover, its variance is reduced along the snapshots.

Our methods rely on the SARAH estimator introduced in \citep{Nguyen2017_sarah} for the non-composite convex problem instances of \eqref{eq:finite_sum}.
We instead consider it in a more general setting to cover both \eqref{eq:finite_sum} and \eqref{eq:sopt_prob}, which is defined as follows:
\begin{equation}\label{eq:sarah_estimator}
v_t := v_{t-1} + \nabla_w{f}(w_t; \xi_{t}) - \nabla_w{f}(w_{t-1}; \xi_t),
\end{equation}
for a given realization $\xi_t$ of $\xi$.
Each evaluation of $v_t$ requires two gradient evaluations.
Clearly, the SARAH estimator is biased, since $\cExp{v_t}{\Fc_t} = v_{t-1} + \nabla{f}(w_t) - \nabla{f}(w_{t-1}) \neq \nabla{f}(w_t)$.
But it has a variance reduced property.

\beforepara
\paragraph{Mini-batch estimators:}
We consider a mini-batch estimator of the gradient $\nabla{f}$ in \eqref{eq:stochastic_estimator} and of the SARAH estimator \eqref{eq:sarah_estimator} respectively as follows:
\begin{equation}\label{eq:mini_batch}
{\!\!\!}\widetilde{\nabla}f_{\Bc_t}(w_t) := \frac{1}{b_t}\sum_{i\in\Bc_t}\nabla_w{f}(w_t;\xi_i)~~\text{and}~~v_t := v_{t-1} + \frac{1}{b_t}\sum_{i\in\Bc_t}\left(\nabla_w{f}(w_t;\xi_i) - \nabla_w{f}(w_{t-1};\xi_i)\right),{\!\!\!}
\end{equation}
where $\Bc_t$ is a mini-batch of the size $b_t := \abs{\Bc_t} \geq 1$.
For the finite-sum problem \eqref{eq:finite_sum}, we replace $f(\cdot;\xi_i)$ by $f_i(\cdot)$.
In this case, $\Bc_t$ is a uniformly random subset of $[n]$. 
Clearly, if $b_t = n$, then we take the full gradient $\nabla{f}$ as the exact estimator.

\beforesubsec
\subsection{Basic properties of stochastic and SARAH estimators}
\aftersubsec
We recall some basic properties of the standard stochastic and SARAH estimators for \eqref{eq:sopt_prob} and \eqref{eq:finite_sum}.
The following result was proved in \citep{Nguyen2017_sarah}.

\begin{lemma}\label{le:sarah_estimator}
Let $\set{v_t}_{t\geq 0}$ be defined by \eqref{eq:sarah_estimator} and $\Fc_t$ be defined by \eqref{eq:history_Ft}. Then
\begin{equation}\label{eq:biased}
\begin{array}{ll}
&\cExp{v_t}{\Fc_t} = \nabla{f}(w_t) + \epsilon_t \neq \nabla{f}(w_t),~~\text{where}~~\epsilon_t := v_{t-1} - \nabla{f}(w_{t-1}). \vspace{1.75ex}\\
&\cExp{\norms{v_t - \nabla{f}(w_t)}^2}{\Fc_t} = \norms{ v_{t-1} - \nabla{f}(w_{t-1})}^2 + \cExp{\norms{v_t - v_{t-1}}^2}{\Fc_t} \vspace{1ex}\\
&{~~~~~~~~~~~~~~~~~~~~~~~~~~~~~} - \norms{\nabla{f}(w_t) - \nabla{f}(w_{t-1})}^2.
\end{array}
\end{equation}
Consequently, for any $t \geq 0$, we have
\begin{equation}\label{eq:biased_sum}
\begin{array}{ll}
&\Exp{\norms{v_t - \nabla{f}(w_t)}^2} = \Exp{\norms{ v_0 - \nabla{f}(w_0)}^2} + \sum_{j=1}^{t}\Exp{\norms{v_j - v_{j-1}}^2} \vspace{1ex}\\
&{~~~~~~~~~~~~~~~~~~~~~~~~~~~~~} - \sum_{j=1}^{t}\Exp{\norms{\nabla{f}(w_j) - \nabla{f}(w_{j-1})}^2}.
\end{array}
\end{equation}
\end{lemma}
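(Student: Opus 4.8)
The plan is to establish the three claims in sequence, all of which follow from the conditional-expectation structure of the SARAH recursion \eqref{eq:sarah_estimator}. The two facts I would record at the outset are that $v_{t-1}$, $w_{t-1}$, and $w_t$ are all $\Fc_t$-measurable (by the convention that $\Fc_t$ encodes the history through iteration $t$), whereas the fresh sample $\xi_t$ is drawn independently of $\Fc_t$. Consequently, applying $\cExp{\cdot}{\Fc_t}$ to either stochastic gradient appearing in \eqref{eq:sarah_estimator} replaces $\nabla_w{f}(w_t;\xi_t)$ and $\nabla_w{f}(w_{t-1};\xi_t)$ by their population counterparts $\nabla{f}(w_t)$ and $\nabla{f}(w_{t-1})$, while any $\Fc_t$-measurable quantity passes through unchanged.

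For the first identity I would apply $\cExp{\cdot}{\Fc_t}$ directly to \eqref{eq:sarah_estimator}. Since $v_{t-1}$ survives the conditioning, this gives $\cExp{v_t}{\Fc_t} = v_{t-1} + \nabla{f}(w_t) - \nabla{f}(w_{t-1})$, and rewriting $v_{t-1} = \nabla{f}(w_{t-1}) + \epsilon_t$ with $\epsilon_t := v_{t-1} - \nabla{f}(w_{t-1})$ yields the claimed formula, which in particular shows the estimator is biased whenever $\epsilon_t \neq 0$.

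The second identity is the heart of the lemma, and the key step is the algebraic split $v_t - \nabla{f}(w_t) = \epsilon_t + \big[(\nabla_w{f}(w_t;\xi_t) - \nabla_w{f}(w_{t-1};\xi_t)) - (\nabla{f}(w_t) - \nabla{f}(w_{t-1}))\big]$, in which the first term $\epsilon_t$ is $\Fc_t$-measurable and the bracketed term has zero conditional mean by the first paragraph. Expanding the squared norm and taking $\cExp{\cdot}{\Fc_t}$, the cross term vanishes, leaving $\norms{\epsilon_t}^2$ plus the conditional variance of the bracketed term. Writing that variance as a conditional second moment minus a squared conditional mean, and noting $\nabla_w{f}(w_t;\xi_t) - \nabla_w{f}(w_{t-1};\xi_t) = v_t - v_{t-1}$ with conditional mean $\nabla{f}(w_t) - \nabla{f}(w_{t-1})$, reproduces exactly the three terms $\norms{v_{t-1} - \nabla{f}(w_{t-1})}^2 + \cExp{\norms{v_t - v_{t-1}}^2}{\Fc_t} - \norms{\nabla{f}(w_t) - \nabla{f}(w_{t-1})}^2$ on the right-hand side of \eqref{eq:biased}.

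Finally, the third identity follows by taking total expectations of the second one through the tower property and telescoping the resulting recursion in $\Exp{\norms{v_j - \nabla{f}(w_j)}^2}$ from $j = 1$ to $t$; the leading first-moment term collapses to $\Exp{\norms{v_0 - \nabla{f}(w_0)}^2}$, and the two summations over $\Exp{\norms{v_j - v_{j-1}}^2}$ and $\Exp{\norms{\nabla{f}(w_j) - \nabla{f}(w_{j-1})}^2}$ remain, giving \eqref{eq:biased_sum}. I expect no serious obstacle: every computation after the first paragraph is a mechanical consequence of decomposing $v_t - \nabla{f}(w_t)$ into an $\Fc_t$-measurable part and a conditionally centered part, so the only point genuinely requiring care is the measurability-and-independence bookkeeping that justifies replacing sampled gradients by full gradients under $\cExp{\cdot}{\Fc_t}$.
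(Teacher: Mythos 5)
Your proof is correct; the paper does not reprove this lemma but cites \citep{Nguyen2017_sarah}, where the argument is the same conditional-expectation computation: split $v_t - \nabla{f}(w_t)$ into the $\Fc_t$-measurable part $\epsilon_t$ plus a conditionally centered increment, kill the cross term, and telescope. Your bias--variance phrasing of the second identity is just a repackaging of that expansion, so there is nothing to flag.
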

Our next result is some properties of the mini-batch estimators in \eqref{eq:mini_batch}.
Most of the proof is presented in \citep{harikandeh2015stopwasting,lohr2009sampling,Nguyen2017_sarahnonconvex,Nguyen2018sgd_dnn}, and we only provide the missing proof of \eqref{eq:mini_batch_est2} and  \eqref{eq:mini_batch_est2b} in Appendix~\ref{apdx:le:mini_batch}.

\begin{lemma}\label{le:mini_batch}
If $\widetilde{\nabla}f_{\Bc_t}(w_t)$ is generated by \eqref{eq:mini_batch}, then, under Assumption~\ref{as:A3}, we have
\begin{equation}\label{eq:mini_batch_est1}
\begin{array}{ll}
&\Exp{\widetilde{\nabla}f_{\Bc_t}(w_t) \mid \Fc_t} = \nabla{f}(w_t)~\text{and} \vspace{1.5ex}\\
&\Exp{\norms{\widetilde{\nabla}f_{\Bc_t}(w_t) - \nabla{f}(w_t)}^2 \mid \Fc_t} = \dfrac{1}{b_t}\Exp{\Vert\nabla_w{f}(w_t;\xi) - \nabla{f}(w_t)\Vert^2 \mid \Fc_t} \leq \dfrac{\sigma^2}{b_t}.
\end{array}
\end{equation}
If $\widetilde{\nabla}f_{\Bc_t}(w_t)$ is generated by \eqref{eq:mini_batch} for the finite support case $\vert\Omega\vert = n$, then
\begin{equation}\label{eq:mini_batch_est1b}
\begin{array}{ll}
& \Exp{\widetilde{\nabla}f_{\Bc_t}(w_t) \mid \Fc_t} = \nabla{f}(w_t) \vspace{1ex}\\
\text{and}~& \Exp{\norms{\widetilde{\nabla}f_{\Bc_t}(w_t) - \nabla{f}(w_t)}^2 \mid \Fc_t} \leq \frac{1}{b_t}\left(\frac{n-b_t}{n-1}\right)\sigma^2_n,
\end{array}
\end{equation}
where $\sigma_n^2$ is defined as
\begin{equation*}
\sigma_n^2 := \frac{1}{n}\sum_{i=1}^n\left[\norms{\nabla{f}_i(w_t)}^2 - \norms{\nabla{f}(w_t)}^2\right].
\end{equation*}
If $v_t$ is generated by \eqref{eq:mini_batch} for the case $\vert\Omega\vert = n$ in the finite-sum problem \eqref{eq:finite_sum}, then
\begin{equation}\label{eq:mini_batch_est2}
\begin{array}{ll}
\Exp{\norms{v_t - v_{t-1}}^2 \mid \Fc_t} &= \frac{n(b_t-1)}{b_t(n-1)}\Vert \nabla{f}(w_t) - \nabla{f}(w_{t-1})\Vert^2  \vspace{1ex}\\
& + {~} \frac{(n - b_t)}{b_t(n-1)}\cdot \frac{1}{n}\; \sum_{i=1}^n\norms{\nabla{f_i}(w_t) - \nabla{f_i}(w_{t-1})}^2.
\end{array}
\end{equation}
If $v_t$ is generated by \eqref{eq:mini_batch} for the case $\vert\Omega\vert \neq n$ in the expectation problem \eqref{eq:sopt_prob}, then
\begin{equation}\label{eq:mini_batch_est2b}
\begin{array}{ll}
\Exp{\norms{v_t - v_{t-1}}^2  \mid \Fc_t } &=  \left( 1 - \frac{1}{b_t} \right) \| \nabla f(w_t) - \nabla f(w_{t-1}) \|^2 \vspace{1ex}\\
&+ {~} \frac{1}{b_t}\Exp{\norms{\nabla_w{f}(w_t;\xi) - \nabla_w{f}(w_{t-1};\xi)}^2  \mid \Fc_t}.
\end{array}
\end{equation}
\end{lemma}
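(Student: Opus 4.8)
The plan is to treat both identities as exact second-moment computations for the average of the per-sample gradient increments, isolating the diagonal from the off-diagonal contributions. Abbreviate $d_i := \nabla f_i(w_t) - \nabla f_i(w_{t-1})$ in the finite-sum case and $d(\xi) := \nabla_w f(w_t;\xi) - \nabla_w f(w_{t-1};\xi)$ in the expectation case. Conditioning on $\Fc_t$ freezes $w_t$ and $w_{t-1}$, so these increments are deterministic in the finite-sum case (randomness lies only in the choice of $\Bc_t$) and i.i.d.\ given $\Fc_t$ in the expectation case. By \eqref{eq:mini_batch}, $v_t - v_{t-1} = \frac{1}{b_t}\sum_{i\in\Bc_t} d_i$, hence $\norms{v_t - v_{t-1}}^2 = \frac{1}{b_t^2}\sum_{i,j\in\Bc_t}\iprods{d_i, d_j}$, and the whole computation reduces to taking conditional expectations of the $i=j$ and $i\neq j$ terms under the relevant sampling law.

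For the expectation identity \eqref{eq:mini_batch_est2b}, the batch $\Bc_t$ is $b_t$ independent draws of $\xi$, so $\cExp{d_i}{\Fc_t} = \nabla f(w_t) - \nabla f(w_{t-1})$ for each $i$. The $b_t$ diagonal terms each contribute $\cExp{\norms{d(\xi)}^2}{\Fc_t}$, while the $b_t(b_t-1)$ off-diagonal terms factor by independence into $\norms{\nabla f(w_t) - \nabla f(w_{t-1})}^2$. Dividing by $b_t^2$ and collecting gives $\frac{1}{b_t}\cExp{\norms{d(\xi)}^2}{\Fc_t} + (1 - \frac{1}{b_t})\norms{\nabla f(w_t) - \nabla f(w_{t-1})}^2$, which is exactly \eqref{eq:mini_batch_est2b}; this case is routine.

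The finite-sum identity \eqref{eq:mini_batch_est2} is the substantive part, since $\Bc_t$ is drawn without replacement, uniformly over the $\binom{n}{b_t}$ subsets of $[n]$, which introduces correlation between indices. I would encode membership by indicators $\mathbf{1}_i := \mathbf{1}_{\{i\in\Bc_t\}}$ and use the first two inclusion moments $\Exp{\mathbf{1}_i} = b_t/n$ and $\Exp{\mathbf{1}_i\mathbf{1}_j} = b_t(b_t-1)/(n(n-1))$ for $i\neq j$. Writing $\sum_{i\in\Bc_t} d_i = \sum_{i=1}^n \mathbf{1}_i d_i$ and expanding, the diagonal sum becomes $\frac{b_t}{n}\sum_{i=1}^n\norms{d_i}^2$ and the cross sum becomes $\frac{b_t(b_t-1)}{n(n-1)}\sum_{i\neq j}\iprods{d_i, d_j}$. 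I would then eliminate the cross sum through the identity $\sum_{i\neq j}\iprods{d_i, d_j} = \norms{\sum_{i=1}^n d_i}^2 - \sum_{i=1}^n\norms{d_i}^2 = n^2\norms{\nabla f(w_t) - \nabla f(w_{t-1})}^2 - \sum_{i=1}^n\norms{d_i}^2$, using $\frac{1}{n}\sum_{i=1}^n d_i = \nabla f(w_t) - \nabla f(w_{t-1})$.

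The only delicate step is the subsequent algebraic simplification. After substitution and division by $b_t^2$, the coefficient multiplying $\sum_{i=1}^n\norms{d_i}^2$ is $\frac{1}{b_t^2}\big(\frac{b_t}{n} - \frac{b_t(b_t-1)}{n(n-1)}\big)$, which I expect to collapse to $\frac{n-b_t}{b_t n(n-1)}$; this is where the finite-population correction $\frac{n-b_t}{n-1}$ emerges once the $\frac{1}{n}$ normalization of $\frac{1}{n}\sum_{i=1}^n\norms{d_i}^2$ is reinstated. Matching the two resulting coefficients against $\frac{n(b_t-1)}{b_t(n-1)}$ and $\frac{n-b_t}{b_t(n-1)}$ then yields \eqref{eq:mini_batch_est2}. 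I anticipate this bookkeeping — keeping the inclusion moments and the $\frac{1}{n}$ factors straight — to be the main (if modest) obstacle, whereas the diagonal/off-diagonal decomposition and the i.i.d.\ case are immediate.
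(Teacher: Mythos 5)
Your proposal is correct and follows essentially the same route as the paper: for \eqref{eq:mini_batch_est2} the paper likewise expands $\Vert\frac{1}{b_t}\sum_{i\in\Bc_t}\Xi_i\Vert^2$ into diagonal and off-diagonal parts and invokes the same two inclusion moments $\frac{b_t}{n}$ and $\frac{b_t(b_t-1)}{n(n-1)}$ for sampling without replacement, then eliminates the cross sum via $\sum_{i,j}\iprods{\Xi_i,\Xi_j}=n^2\Vert\nabla f(w_t)-\nabla f(w_{t-1})\Vert^2$; your coefficient bookkeeping checks out and reproduces $\frac{n(b_t-1)}{b_t(n-1)}$ and $\frac{n-b_t}{b_t(n-1)}\cdot\frac{1}{n}$ exactly. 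For \eqref{eq:mini_batch_est2b} the paper phrases the same i.i.d.\ computation through the variance-of-the-sample-mean identity rather than a direct double-sum expansion, but the content is identical.
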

Note that if $b_t = n$, i.e., we take a full gradient estimate, then the second estimate of \eqref{eq:mini_batch_est1b} is vanished and independent of $\sigma_n$.
The second term of \eqref{eq:mini_batch_est2} is also vanished.

\beforesec
\section{ProxSARAH framework and convergence analysis}\label{sec:alg_section}
\aftersec
We describe our unified algorithmic framework and then specify it to solve different instances of \eqref{eq:sopt_prob} and \eqref{eq:finite_sum} under appropriate structures.
The general algorithm is described in Algorithm~\ref{alg:prox_sarah}, which is abbreviated by ProxSARAH.

\begin{algorithm}[hpt!]\caption{(Proximal SARAH with stochastic recursive gradient estimators)}\label{alg:prox_sarah}
\begin{algorithmic}[1]
   \State{\bfseries Initialization:} An initial point $\widetilde{w}_0$ and necessary parameters $\eta_t > 0$ and $\gamma_t \in (0, 1]$ (will be specified in the sequel).
   \vspace{0.5ex}
   \State{\bfseries Outer Loop:}~{\bfseries For $s := 1, 2, \cdots, S$ do}
   \vspace{0.5ex}   
   \State\hspace{3ex}\label{step:o2} Generate a snapshot $v_0^{(s)}$ at $w_0^{(s)} := \widetilde{w}_{s-1}$.
   \vspace{0.5ex}   
   \State\hspace{3ex}\label{step:o3} Update $\widehat{w}_1^{(s)} := \prox_{\eta_0\psi}(w_0^{(s)} - \eta_0v_0^{(s)})$ and $w_1^{(s)} := (1-\gamma_0)w_0^{(s)} + \gamma_0\widehat{w}_1^{(0)}$.
   \vspace{0.5ex}   
   \State\hspace{3ex}\label{step:o4}{\bfseries Inner Loop:}~{\bfseries For $t := 1,\cdots,m$ do}
   \vspace{0.5ex}   
   \State\hspace{6ex}\label{step:i1} Generate a proper single random  sample or mini-batch $\hat{\Bc}_t^{(s)}$. 
   \vspace{0.5ex}   
   \State\hspace{6ex}\label{step:i2} Evaluate $v_{t}^{(s)} := v_{t-1}^{(s)} + \frac{1}{\vert \hat{\Bc}_t^{(s)}\vert}\sum_{\xi_t^{(s)} \in \hat{\Bc}_t^{(s)}}\big[ \nabla_w{f}(w_{t}^{(s)}; \xi_t^{(s)}) - \nabla_w{f}(w_{t-1}^{(s)}; \xi_t^{(s)}) \big]$.
   \vspace{0.5ex}   
   \State\hspace{6ex}\label{step:i3} Update $\widehat{w}_{t+1}^{(s)} := \prox_{\eta_t\psi}(w_{t}^{(s)} - \eta_t v_{t}^{(s)})$ and $w_{t+1}^{(s)} := (1-\gamma_t)w_t^{(s)} + \gamma_t\widehat{w}_{t+1}^{(s)}$.
   \vspace{0.5ex}   
   \State\hspace{3ex}{\bfseries End For}
   \vspace{0.5ex}   
   \State\hspace{3ex}\label{step:o5} Set $\widetilde{w}_s := w_{m+1}^{(s)}$ 
   \vspace{0.5ex}   
   \State{\bfseries End For}
\end{algorithmic}
\end{algorithm} 

In terms of algorithm, ProxSARAH is different from SARAH where it has one proximal step  followed by an additional averaging step, Step~\ref{step:i3}. 
However, using the gradient mapping $G_{\eta}$ defined by \eqref{eq:gradient_mapping}, we can view Step~\ref{step:i3} as:
\begin{equation}\label{eq:grad_representation}
w_{t+1}^{(s)} := w_t^{(s)} - \eta_t\gamma_tG_{\eta_t}(w_t^{(s)}).
\end{equation}
Hence, this step is similar to a gradient step applying to the gradient mapping $G_{\eta_t}(w_t^{(s)})$.
In particular, if we set $\gamma_t = 1$, then we obtain a vanilla proximal SARAH variant which is similar to ProxSVRG, ProxSVRG+, and ProxSpiderBoost discussed above.
ProxSVRG, ProxSVRG+, and ProxSpiderBoost are simply vanilla proximal gradient-type methods in stochastic setttings.
If $\psi = 0$, then ProxSARAH is reduced to SARAH in \citep{Nguyen2017_sarah,Nguyen2017_sarahnonconvex,Nguyen2018_iSARAH} with a step-size $\hat{\eta}_t := \gamma_t\eta_t$. 
Note that Step~\ref{step:i3} can be represented as a weighted averaging step with given weights $\sets{\tau_j^{(s)}}_{j=0}^m$:
\vspace{-1ex}
\begin{equation*}
w_{t+1}^{(s)} := \frac{1}{\Sigma_t^{(s)}}\sum_{j=0}^t\tau_j^{(s)}\widehat{w}_{j+1}^{(s)}, ~~~\text{where}~~\Sigma_t^{(s)} := \sum_{j=0}^t\tau_j^{(s)} ~~\text{and}~~\gamma_j^{(s)} := \frac{\tau_j^{(s)}}{\Sigma_t^{(s)}}.
\vspace{-1ex}
\end{equation*}
Compared to \citep{ghadimi2012optimal,Nemirovski2009a}, ProxSARAH evaluates $v_t$ at the averaged point $w_t^{(s)}$ instead of $\widehat{w}_t^{(s)}$.
Therefore, it can be written as
\begin{equation*}
w_{t+1}^{(s)} := (1-\gamma_t)w_t^{(s)} +  \gamma_t\prox_{\eta_t\psi}(w_{t}^{(s)} - \eta_t v_{t}^{(s)}),
\end{equation*}
which is similar to averaged fixed-point schemes (e.g.  the Krasnosel'ski\u{i} -- Mann scheme) in the literature, see, e.g., \citep{Bauschke2011}. 

In addition, we will show in our analysis a key difference in terms of step-sizes $\eta_t$ and $\gamma_t$, mini-batch, and epoch length between ProxSARAH and existing methods, including SPIDER \citep{fang2018spider} and SpiderBoost \citep{wang2018spiderboost}. 

\beforesubsec
\subsection{Analysis of the inner-loop: Key estimates}
\aftersubsec
This subsection proves two key estimates of the inner loop for $t = 1$ to $m$.
We break our analysis into two different lemmas, which provide key estimates for our convergence analysis.
We assume that the mini-batch size $\hat{b} := \vert \hat{\Bc}_t^{(s)}\vert$ in the inner loop is fixed.

\begin{lemma}\label{le:key_est1}
Let $\set{(w_t,\widehat{w}_t)}$ be generated by the inner-loop of Algorithm~\ref{alg:prox_sarah} with $\vert \hat{\Bc}_t^{(s)}\vert = \hat{b} \in [n-1]$ fixed. 
Then, under Assumption~\ref{as:A2}, we have
\begin{equation}\label{eq:key_est1}
\begin{array}{ll}
\Exp{F(w_{m+1}^{(s)})} {\!\!\!\!}&\leq  \Exp{F(w_0^{(s)})} + \displaystyle\frac{\rho L^2}{2}\sum_{t=0}^m\beta_t\displaystyle\sum_{j=1}^{t}\gamma_{j-1}^2\Exp{\Vert\widehat{w}_j^{(s)} - w_{j-1}^{(s)}\Vert^2}  \vspace{1ex}\\
& - {~} \displaystyle\frac{1}{2}\sum_{t=0}^m\kappa_t \Exp{\Vert\widehat{w}_{t+1}^{(s)} - w_t^{(s)}\Vert^2} + \dfrac{1}{2}\bar{\sigma}^{(s)}\Big(\displaystyle\sum_{t=0}^m\beta_t\Big)   \vspace{1ex}\\
& - {~} \displaystyle\sum_{t=0}^m\frac{s_t\eta_t^2}{2}\Exp{\Vert G_{\eta_t}(w_t^{(s)})\Vert^2} -  \displaystyle\sum_{t=0}^m\Exp{\sigma_t^{(s)}},
\end{array}
\end{equation}
where $\set{c_t}$, $\set{r_t}$, and $\set{s_t}$ are any given positive sequences, $\bar{\sigma}^{(s)} := \Exp{\norms{v_0^{(s)} - \nabla{f}(w_0^{(s)})}^2} \geq 0$,  $\sigma_t^{(s)} := \frac{\gamma_t}{2c_t}\Vert \nabla{f}(w_t^{(s)}) - v_t^{(s)} - c_t(\widehat{w}_{t+1}^{(s)} - w_t^{(s)})\Vert^2 \geq 0$, and 
\begin{equation}\label{eq:key_est1_param}
\beta_t := \frac{\gamma_t}{c_t} + (1+r_t)s_t\eta_t^2, ~~\text{and}~~\kappa_t := \frac{2\gamma_t}{\eta_t} -  L\gamma_t^2- \gamma_t c_t - s_t\left(1+\frac{1}{r_t} \right).
\end{equation}
Here, $\rho := \frac{1}{\hat{b}}$ if Algorithm~\ref{alg:prox_sarah} solves \eqref{eq:sopt_prob}, and $\rho := \frac{(n-\hat{b})}{\hat{b}(n-1)}$ if Algorithm~\ref{alg:prox_sarah} solves \eqref{eq:finite_sum}.
\end{lemma}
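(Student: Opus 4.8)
The plan is to establish a deterministic one-step descent inequality for the pair $(w_t^{(s)}, \widehat{w}_{t+1}^{(s)})$, take total expectation, telescope the objective values over the inner loop, and finally substitute the SARAH variance recursion from Lemma~\ref{le:sarah_estimator} together with the mini-batch identities of Lemma~\ref{le:mini_batch}. To lighten notation I drop the superscript $(s)$ and write $d_t := \widehat{w}_{t+1} - w_t$. First I would apply the descent bound \eqref{eq:upper_bound_fw} to $f$ at $w_{t+1} = (1-\gamma_t)w_t + \gamma_t\widehat{w}_{t+1}$, using $w_{t+1}-w_t = \gamma_t d_t$, and combine it with the convexity estimate $\psi(w_{t+1}) \leq \psi(w_t) + \gamma_t(\psi(\widehat{w}_{t+1}) - \psi(w_t))$, valid since $\gamma_t \in (0,1]$. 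The algebraic engine is the optimality condition of the proximal step: since $\widehat{w}_{t+1} = \prox_{\eta_t\psi}(w_t - \eta_t v_t)$, there is a subgradient $\xi\in\partial\psi(\widehat{w}_{t+1})$ with $\xi = -v_t - \tfrac{1}{\eta_t}d_t$, so convexity of $\psi$ gives $\psi(\widehat{w}_{t+1}) - \psi(w_t) \leq \iprods{\xi, d_t} = -\iprods{v_t, d_t} - \tfrac{1}{\eta_t}\norms{d_t}^2$. Substituting produces a bound on $F(w_{t+1})$ involving only the cross term $\gamma_t\iprods{\nabla{f}(w_t) - v_t, d_t}$ and $\norms{d_t}^2$.

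Next I would dispatch the cross term by completing the square. The nonnegative quantity $\sigma_t = \tfrac{\gamma_t}{2c_t}\norms{\nabla{f}(w_t) - v_t - c_t d_t}^2$ expands to the exact identity $\gamma_t\iprods{\nabla{f}(w_t) - v_t, d_t} = \tfrac{\gamma_t}{2c_t}\norms{\nabla{f}(w_t) - v_t}^2 + \tfrac{\gamma_t c_t}{2}\norms{d_t}^2 - \sigma_t$, after which the coefficient of $\norms{d_t}^2$ equals $-\big(\tfrac{\gamma_t}{\eta_t} - \tfrac{\gamma_t c_t}{2} - \tfrac{L\gamma_t^2}{2}\big)$. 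To surface the gradient-mapping term I would use nonexpansiveness of $\prox_{\eta_t\psi}$: setting $\bar{w}_{t+1} := \prox_{\eta_t\psi}(w_t - \eta_t\nabla{f}(w_t))$, so that $\eta_t G_{\eta_t}(w_t) = w_t - \bar{w}_{t+1}$ by \eqref{eq:gradient_mapping} and $\norms{\widehat{w}_{t+1} - \bar{w}_{t+1}} \leq \eta_t\norms{v_t - \nabla{f}(w_t)}$, the Young inequality with parameter $r_t$ gives $\eta_t^2\norms{G_{\eta_t}(w_t)}^2 \leq (1+\tfrac{1}{r_t})\norms{d_t}^2 + (1+r_t)\eta_t^2\norms{v_t - \nabla{f}(w_t)}^2$. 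Splitting $-\big(\tfrac{\gamma_t}{\eta_t} - \tfrac{\gamma_t c_t}{2} - \tfrac{L\gamma_t^2}{2}\big)\norms{d_t}^2 = -\tfrac{\kappa_t}{2}\norms{d_t}^2 - \tfrac{s_t}{2}(1+\tfrac{1}{r_t})\norms{d_t}^2$ and feeding the last piece into this Young bound yields exactly $-\tfrac{s_t\eta_t^2}{2}\norms{G_{\eta_t}(w_t)}^2$ while adding $\tfrac{s_t(1+r_t)\eta_t^2}{2}\norms{v_t - \nabla{f}(w_t)}^2$ to the variance coefficient. Collecting the two contributions $\tfrac{\gamma_t}{2c_t} + \tfrac{s_t(1+r_t)\eta_t^2}{2} = \tfrac{\beta_t}{2}$ gives the one-step estimate $F(w_{t+1}) \leq F(w_t) + \tfrac{\beta_t}{2}\norms{\nabla{f}(w_t) - v_t}^2 - \tfrac{\kappa_t}{2}\norms{d_t}^2 - \tfrac{s_t\eta_t^2}{2}\norms{G_{\eta_t}(w_t)}^2 - \sigma_t$.

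Finally I would take total expectation, sum over $t = 0,\dots,m$ so the $F$-values telescope to $\Exp{F(w_{m+1})} - \Exp{F(w_0)}$, and bound the residual sum $\sum_{t=0}^m\tfrac{\beta_t}{2}\Exp{\norms{\nabla{f}(w_t) - v_t}^2}$. Here I invoke the SARAH identity \eqref{eq:biased_sum}, which writes $\Exp{\norms{v_t - \nabla{f}(w_t)}^2}$ as $\bar{\sigma}$ plus $\sum_{j=1}^t\big(\Exp{\norms{v_j - v_{j-1}}^2} - \Exp{\norms{\nabla{f}(w_j) - \nabla{f}(w_{j-1})}^2}\big)$. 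Inserting the mini-batch formula \eqref{eq:mini_batch_est2} (finite-sum) or \eqref{eq:mini_batch_est2b} (expectation), the key observation is that the coefficient of $\norms{\nabla{f}(w_j) - \nabla{f}(w_{j-1})}^2$ collapses to $-\rho$ with $\rho = \tfrac{n-\hat{b}}{\hat{b}(n-1)}$ or $\rho = \tfrac{1}{\hat{b}}$ respectively, so this nonnegative quantity may simply be dropped; the surviving term is controlled through Assumption~\ref{as:A2} by $\rho L^2\Exp{\norms{w_j - w_{j-1}}^2} = \rho L^2\gamma_{j-1}^2\Exp{\norms{\widehat{w}_j - w_{j-1}}^2}$. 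Substituting this nested bound reproduces precisely the double sum $\tfrac{\rho L^2}{2}\sum_{t=0}^m\beta_t\sum_{j=1}^t\gamma_{j-1}^2\Exp{\norms{\widehat{w}_j - w_{j-1}}^2}$ and the term $\tfrac{1}{2}\bar{\sigma}\big(\sum_{t=0}^m\beta_t\big)$, establishing \eqref{eq:key_est1}. I expect the main obstacle to be the careful bookkeeping that forces the two $\norms{v_t - \nabla{f}(w_t)}^2$ coefficients to combine into exactly $\tfrac{\beta_t}{2}$ and that correctly identifies the sign of the $\norms{\nabla{f}(w_j) - \nabla{f}(w_{j-1})}^2$ coefficient as $-\rho$ in both regimes; the rest is a controlled, if lengthy, chain of Young-type inequalities.
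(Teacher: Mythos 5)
Your proposal is correct and follows essentially the same route as the paper's proof: descent lemma plus convexity of $\psi$ and the prox optimality condition, completing the square with parameter $c_t$ to isolate $\sigma_t^{(s)}$, the nonexpansiveness/Young argument with $r_t$ and weight $s_t$ to surface the gradient mapping, and finally the SARAH recursion \eqref{eq:biased_sum} combined with \eqref{eq:mini_batch_est2} or \eqref{eq:mini_batch_est2b} and Assumption~\ref{as:A2} to produce the double sum with coefficient $\rho L^2$. The only (immaterial) difference is that you take total expectation after assembling the full one-step inequality rather than midway through, and your bookkeeping of $\beta_t$, $\kappa_t$, and the $-\rho$ coefficient checks out exactly.
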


The proof of Lemma~\ref{le:key_est1} is deferred to Appendix \ref{apdx:le:key_est1}.
The next lemma shows how to choose constant step-sizes $\gamma$ and $\eta$ by fixing other parameters in Lemma~\ref{le:key_est1} to obtain a descent property. The proof of this lemma is given in Appendix \ref{apdx:le:constant_stepsize}.

\begin{lemma}\label{le:constant_stepsize}
Under Assumption~\ref{as:A2} and $\hat{b} := \vert \hat{\Bc}_t^{(s)}\vert \in [n-1]$, let us choose $\eta_t = \eta > 0$ and $\gamma_t = \gamma > 0$ in Algorithm~\ref{alg:prox_sarah} such that
\begin{equation}\label{eq:constant_stepsize}
\gamma_t = \gamma := \frac{1}{L\sqrt{\omega m}}~~~\text{and}~~\eta_t = \eta := \frac{2\sqrt{\omega m}}{4\sqrt{\omega m} + 1},
\end{equation}
where $\omega := \frac{3}{2\hat{b}}$ if Algorithm~\ref{alg:prox_sarah} solves \eqref{eq:sopt_prob} and $\omega := \frac{3(n - \hat{b})}{2\hat{b}(n-1)}$ if Algorithm~\ref{alg:prox_sarah} solves \eqref{eq:finite_sum}.
Then
\begin{equation}\label{eq:key_est2}
\begin{array}{ll}
\Exp{F(w_{m+1}^{(s)})} &\leq {~} \Exp{F(w_0^{(s)})} - \displaystyle\frac{\gamma\eta^2}{2}\sum_{t=0}^m\Exp{\Vert G_{\eta}(w_t^{(s)})\Vert^2} \\
&  -  {~} \displaystyle\sum_{t=0}^m\Exp{\sigma_t^{(s)}} + \dfrac{\gamma\theta}{2}(m+1)\bar{\sigma}^{(s)},
\end{array}
\end{equation}
where $\theta := 1 + 2\eta^2 \leq \frac{3}{2}$.
\end{lemma}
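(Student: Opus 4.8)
The plan is to specialize the general inner-loop estimate \eqref{eq:key_est1} from Lemma~\ref{le:key_est1} by making a careful, self-consistent choice of the free sequences $\set{c_t}$, $\set{r_t}$, $\set{s_t}$ together with the constant step-sizes $\gamma$ and $\eta$ from \eqref{eq:constant_stepsize}, so that two things happen simultaneously: the coupled double-sum term (the $\frac{\rho L^2}{2}\sum_t\beta_t\sum_{j\leq t}\gamma_{j-1}^2\Exp{\Vert\widehat{w}_j^{(s)}-w_{j-1}^{(s)}\Vert^2}$ term) is dominated by the negative $-\frac12\sum_t\kappa_t\Exp{\Vert\widehat{w}_{t+1}^{(s)}-w_t^{(s)}\Vert^2}$ term, and the remaining negative term in $G_{\eta}$ together with the $\bar\sigma^{(s)}$ term takes the clean form appearing in \eqref{eq:key_est2}. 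First I would fix $c_t$, $r_t$, $s_t$ to constants (the natural guess is $c_t=c$, $s_t=s$, and $r_t$ a fixed constant chosen to balance the $r_t$ and $1/r_t$ contributions in $\kappa_t$), compute $\beta_t=\beta$ and $\kappa_t=\kappa$ as constants via \eqref{eq:key_est1_param}, and then plug in $\gamma=\frac{1}{L\sqrt{\omega m}}$ and $\eta=\frac{2\sqrt{\omega m}}{4\sqrt{\omega m}+1}$.

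The heart of the argument is handling the double sum. Since $\beta_t=\beta$ is constant, I would swap the order of summation in $\sum_{t=0}^m\beta\sum_{j=1}^{t}\gamma^2\Exp{\Vert\widehat{w}_j^{(s)}-w_{j-1}^{(s)}\Vert^2}$, which turns it into $\beta\gamma^2\sum_{j=1}^{m}(m+1-j)\Exp{\Vert\widehat{w}_j^{(s)}-w_{j-1}^{(s)}\Vert^2}$. Bounding $(m+1-j)\leq m$ and reindexing so the summand matches $\Exp{\Vert\widehat{w}_{t+1}^{(s)}-w_t^{(s)}\Vert^2}$, the double-sum is controlled by $\frac{\rho L^2}{2}\beta\gamma^2 m\sum_{t=0}^{m}\Exp{\Vert\widehat{w}_{t+1}^{(s)}-w_t^{(s)}\Vert^2}$. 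The crucial inequality to establish is then
\begin{equation*}
\rho L^2\beta\gamma^2 m \leq \kappa,
\end{equation*}
so that the positive double-sum is absorbed into the negative $\kappa$-term and the net coefficient of $\Exp{\Vert\widehat{w}_{t+1}^{(s)}-w_t^{(s)}\Vert^2}$ becomes nonpositive, allowing those terms to be dropped entirely. Recalling $\rho=\frac{1}{\hat b}$ or $\frac{n-\hat b}{\hat b(n-1)}$ and the definition $\omega=\frac32\rho$, the factor $\rho L^2\gamma^2 m=\rho L^2 m\cdot\frac{1}{L^2\omega m}=\frac{\rho}{\omega}=\frac23$ collapses nicely, which is precisely why the constant $\frac32$ is built into $\omega$; the remaining verification reduces to an inequality relating $\beta$, $\kappa$, $\eta$ alone.

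After the displacement terms $\Vert\widehat{w}_{t+1}^{(s)}-w_t^{(s)}\Vert^2$ are eliminated, what survives is the $G_{\eta}$-term $-\sum_t\frac{s\eta^2}{2}\Exp{\Vert G_{\eta}(w_t^{(s)})\Vert^2}$, the $-\sum_t\Exp{\sigma_t^{(s)}}$ term (carried through unchanged since $\sigma_t^{(s)}\geq0$), and the variance term $\frac12\bar\sigma^{(s)}(\sum_{t=0}^m\beta)=\frac12\bar\sigma^{(s)}(m+1)\beta$. To reach \eqref{eq:key_est2} I would verify that with the chosen constants $s\eta^2=\gamma\eta^2$ (i.e.\ $s=\gamma$ emerges from the balance, matching the $\frac{\gamma\eta^2}{2}$ coefficient on $G_{\eta}$) and that $\beta=\gamma\theta$ with $\theta:=1+2\eta^2$, which identifies the variance coefficient as $\frac{\gamma\theta}{2}(m+1)$. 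The final bound $\theta\leq\frac32$ follows from $\eta=\frac{2\sqrt{\omega m}}{4\sqrt{\omega m}+1}<\frac12$, giving $2\eta^2<\frac12$. The main obstacle I anticipate is the bookkeeping in the absorption inequality: one must choose $c$, $r$, $s$ so that $\kappa$ is simultaneously large enough to dominate $\rho L^2\beta\gamma^2 m$ and yet the leftover on $G_{\eta}$ retains exactly the coefficient $\gamma\eta^2/2$, and confirming this forces the specific closed forms in \eqref{eq:constant_stepsize}; getting all three balances to hold at once with the stated step-sizes, rather than merely some admissible step-sizes, is where the delicate algebra lives.
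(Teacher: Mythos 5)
Your proposal is correct and follows essentially the same route as the paper: specialize Lemma~\ref{le:key_est1} with constant parameters, reorder the double sum so each $\Exp{\Vert\widehat{w}_{j}^{(s)}-w_{j-1}^{(s)}\Vert^2}$ carries a coefficient at most $\rho L^2\beta\gamma^2 m/2$, absorb it into $-\kappa/2$, and observe that $\rho L^2\gamma^2 m=\rho/\omega=2/3$ makes the absorption condition collapse. The paper completes the bookkeeping you flag as the delicate part by taking $c_t=r_t=1$, $s_t=\gamma$ (so $\beta=\gamma(1+2\eta^2)$, $\kappa=\gamma(\tfrac{2}{\eta}-L\gamma-3)$) and normalizing $\tfrac{2}{\eta}-L\gamma-3=1$, which yields exactly the stated $\gamma$ and $\eta$.
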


\beforesubsec
\subsection{Convergence analysis for  the composite finite-sum problem \eqref{eq:finite_sum}}\label{subsec:mini_batch_case}
\aftersubsec
In this subsection, we specify Algorithm~\ref{alg:prox_sarah} to solve the composite finite-sum problem \eqref{eq:finite_sum}.
We replace $v^{(s)}_0$ at \textbf{Step~\ref{step:o2}} and $v^{(s)}_t$ at \textbf{Step~\ref{step:i2}} of Algorithm~\ref{alg:prox_sarah} by the following ones:
\begin{equation}\label{eq:vt_finite_sum}
v^{(s)}_0 := \frac{1}{b_s}\sum_{j\in\Bc_s}\nabla{f_{j}}(w^{(s)}_0),~~\text{and}~~
v^{(s)}_t := v_{t-1}^{(s)} + \frac{1}{\hat{b}_t^{(s)}}\sum_{i\in\hat{\Bc}_t^{(s)}}\left(\nabla{f_{i}}(w^{(s)}_t) - \nabla{f_{i}}(w^{(s)}_{t-1})\right), 
\end{equation}
where $\Bc_s$ is an outer mini-batch of a fixed size $b_s := \vert\Bc_s\vert = b$, and $\hat{\Bc}_t^{(s)}$ is an inner mini-batch of a fixed size $\hat{b}_t^{(s)} := \vert\hat{\Bc}_t^{(s)}\vert = \hat{b}$.
Moreover, $\Bc_s$ is independent of $\Bc_t^{(s)}$.

We consider two separate cases of this algorithmic variant: adaptive step-sizes and constant step-sizes, but with fixed inner mini-batch size $\hat{b} \in [n-1]$. 
The following theorem proves the convergence of the adaptive step-size variant, whose proof is postponed until Appendix~\ref{apdx:th:convergence_composite_finite_sum_b}.

\begin{theorem}\label{th:convergence_composite_finite_sum_b}
Assume that we apply Algorithm~\ref{alg:prox_sarah} to solve \eqref{eq:finite_sum}, where the estimators $v_0^{(s)}$ and $v_t^{(s)}$ are defined by \eqref{eq:vt_finite_sum} such that $b_s = b \in [n]$ and $\hat{b}_t^{(s)} = \hat{b} \in [n-1]$.

Let $\eta_t := \eta \in (0, \frac{2}{3})$ be fixed, $\omega_{\eta} := \frac{(1 + 2\eta^2)(n-\hat{b})}{\hat{b}(n-1)}$, and $\delta := \frac{2}{\eta} - 3 > 0$.
Then, the sequence $\set{\gamma_t}_{t=0}^m$ updated in a backward mode by
\begin{equation}\label{eq:adaptive_update_param_minibatch} 
\gamma_m := \frac{\delta}{L},~~~\text{and}~~\gamma_t := \frac{\delta}{L\big[\eta + \omega_{\eta} L\sum_{j=t+1}^m\gamma_j\big]},~~t=0,\cdots, m-1,
\end{equation}
satisfies 
\begin{equation}\label{eq:Sigma_lower_bound}
{\!\!\!}\frac{\delta}{L(1 + \delta\omega_{\eta} m)} \leq \gamma_0 < \gamma_1 < \cdots < \gamma_m, ~~~\text{and}~~~
\Sigma_m := \sum_{t=0}^m\gamma_t \geq \frac{2\delta(m+1)}{L(\sqrt{2\delta\omega_{\eta} m + 1} + 1)}.
{\!\!\!}
\end{equation}
Moreover, under Assumptions~\ref{as:A1} and \ref{as:A2}, the following bound holds:
\begin{equation}\label{eq:mini_batch_bound}
\frac{1}{S\Sigma_m}\sum_{s=1}^S\sum_{t=0}^m\gamma_t\Exp{\Vert G_{\eta}(w_t^{(s)})\Vert^2}  \leq  \frac{2}{\eta^2S\Sigma_m}\big[F(\widetilde{w}_0) - F^{\star}\big]  + \frac{3\sigma_n^2}{2\eta^2S}\sum_{s=1}^S\frac{(n-b_s)}{nb_s}.
\end{equation}
If we choose $\eta := \frac{1}{2}$, $m := \big\lfloor \frac{n}{\hat{b}}\big\rfloor$, $b_s  := n$, and $\hat{b} \in [1, \sqrt{n}]$, then $\widetilde{w}_T$ chosen by  $\widetilde{w}_T ~\sim \Ub_p\big(\sets{w_t^{(s)}}_{t=0\to m}^{s=1 \to S}\big)$ such that
\begin{equation*}
\Prob{\widetilde{w}_T = w_t^{(s)}} = p_{(s-1)m+t} := \frac{\gamma_t}{S\Sigma_m}, 
\end{equation*}
satisfies
\begin{equation}\label{eq:grad_norm_bound1_c2}
\Exp{\norms{G_{\eta}(\widetilde{w}_T)}^2} \leq   \frac{4\sqrt{6}L\left[F(\widetilde{w}_0) - F^{\star}\right]}{S\sqrt{n}}.
\end{equation}
Consequently,  the number of outer iterations $S$ needed to obtain $\widetilde{w}_T$ such that $\Exp{\norms{G_{\eta}(\widetilde{w}_T)}^2} \leq \varepsilon^2$ is at most $S := \frac{4\sqrt{6}L\left[F(\widetilde{w}_0) - F^{\star}\right]}{\sqrt{n}\varepsilon^2}$.
Moreover, if $n \leq \frac{96L^2\left[F(\widetilde{w}_0) - F^{\star}\right]^2}{\varepsilon^4}$, then $S \geq 1$.

The number of individual stochastic gradient evaluations $\nabla{f_i}$ does not exceed 
\begin{equation*}
\Tc_{\mathrm{grad}} := \frac{20\sqrt{6}L\sqrt{n}\left[F(\widetilde{w}_0) - F^{\star}\right]}{\varepsilon^2} = \BigO{\frac{L\sqrt{n}}{\varepsilon^2}\left[F(\widetilde{w}_0) - F^{\star}\right]}.
\end{equation*}
The number of $\prox_{\eta\psi}$ operations does not exceed $\Tc_{\prox} :=  \frac{4\sqrt{6}(\sqrt{n} + 1)L\left[F(\widetilde{w}_0) - F^{\star}\right]}{\hat{b}\varepsilon^2}$.
\end{theorem}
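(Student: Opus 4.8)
The plan is to treat this as the adaptive-step-size analogue of Lemma~\ref{le:constant_stepsize}, building everything on the inner-loop master estimate \eqref{eq:key_est1} of Lemma~\ref{le:key_est1}. First I would instantiate the three free sequences there: take $s_t = \gamma_t$, so that the gradient-mapping term $\frac{s_t\eta^2}{2}\Exp{\norms{G_\eta(w_t^{(s)})}^2}$ receives exactly the weight $\gamma_t$ appearing on the left of \eqref{eq:mini_batch_bound}, and choose the pair $(c_t,r_t)$ so that $\beta_t$ is proportional to $\gamma_t$, namely $\beta_t = (1+2\eta^2)\gamma_t$, which makes $\rho\beta_t = \omega_{\eta}\gamma_t$ by the definition of $\omega_{\eta}$. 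The key algebraic manoeuvre is then to swap the order of summation in the double sum $\sum_{t=0}^m\beta_t\sum_{j=1}^t\gamma_{j-1}^2\Exp{\norms{\widehat{w}_j^{(s)}-w_{j-1}^{(s)}}^2}$, converting it into a single sum over $\norms{\widehat{w}_{t+1}^{(s)}-w_t^{(s)}}^2$ with coefficient $\frac{\rho L^2}{2}\gamma_t^2\sum_{j=t+1}^m\beta_j$. Collecting this against the $-\tfrac12\kappa_t$ term and forcing the net coefficient of each $\norms{\widehat{w}_{t+1}^{(s)}-w_t^{(s)}}^2$ to be nonpositive produces an inequality $\kappa_t \ge \rho L^2\gamma_t^2\sum_{j>t}\beta_j$; solving it at equality yields precisely the backward rule \eqref{eq:adaptive_update_param_minibatch}, after which those squared-distance terms, together with the nonnegative $\sigma_t^{(s)}$ terms, can be discarded.

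Next I would establish the structural facts in \eqref{eq:Sigma_lower_bound}. Writing $D_t := \eta + \omega_{\eta} L\sum_{j=t+1}^m\gamma_j$, one has $D_t = D_{t+1} + \omega_{\eta} L\gamma_{t+1} > D_{t+1}$, so by downward induction $\gamma_t = \frac{\delta}{LD_t} < \gamma_{t+1}$, giving monotonicity; and since $\gamma_j \le \gamma_m = \delta/L$ we get $D_0 \le \eta + \omega_{\eta}\delta m \le 1 + \omega_{\eta}\delta m$ (using $\eta \le 1$), hence the stated lower bound on $\gamma_0$. The bound on $\Sigma_m$ is the more delicate point: abbreviating $T_t := \sum_{j=t}^m\gamma_j$, the recursion rearranges to $\omega_{\eta} L^2\gamma_t T_{t+1} = \delta - L\eta\gamma_t$, while the identity $T_t^2 - T_{t+1}^2 = 2\gamma_t T_{t+1} + \gamma_t^2$ lets me telescope $\sum_{t=0}^m (T_t^2 - T_{t+1}^2) = \Sigma_m^2$. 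Combining these and discarding the nonnegative $\sum_t\gamma_t^2$ gives a quadratic inequality $\frac{\omega_{\eta} L^2}{2}\Sigma_m^2 + L\eta\Sigma_m - (m+1)\delta \ge 0$, whose positive root, after rationalising the $\sqrt{\cdot}-\eta$ factor and using $\eta \le 1$, yields a lower bound of the form stated in \eqref{eq:Sigma_lower_bound}.

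With the per-epoch descent in hand I would telescope over the outer index $s=1,\dots,S$, using $w_0^{(s)} = \widetilde{w}_{s-1} = w_{m+1}^{(s-1)}$ so that the $F$-values cancel, and then $F(\cdot)\ge F^{\star}$. The residual variance contribution $\tfrac12\bar{\sigma}^{(s)}\sum_t\beta_t = \tfrac{1+2\eta^2}{2}\bar{\sigma}^{(s)}\Sigma_m$ is controlled via $\bar{\sigma}^{(s)} = \Exp{\norms{v_0^{(s)}-\nabla{f}(w_0^{(s)})}^2} \le \frac{1}{b_s}\frac{n-b_s}{n-1}\sigma_n^2$ from \eqref{eq:mini_batch_est1b}, and rearranging gives \eqref{eq:mini_batch_bound}. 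For the quantitative statement I then set $\eta = \tfrac12$ (so $\delta = 1$ and $\frac{2}{\eta^2} = 8$), $b_s = n$ (which makes $\bar{\sigma}^{(s)} = 0$ \emph{exactly}, eliminating the variance term), $m = \lfloor n/\hat{b}\rfloor$, and $\hat{b}\in[1,\sqrt{n}]$; since $\widetilde{w}_T$ is drawn with probability $\gamma_t/(S\Sigma_m)$, the left side of \eqref{eq:mini_batch_bound} equals $\Exp{\norms{G_\eta(\widetilde{w}_T)}^2}$, and plugging in the $\Sigma_m$ bound (which for these choices is of order $\sqrt{n}/L$) yields \eqref{eq:grad_norm_bound1_c2}. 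The complexity claims are then bookkeeping: setting the right-hand side equal to $\varepsilon^2$ gives $S$; the condition $S\ge 1$ amounts to $n\varepsilon^4 \le 96 L^2(F(\widetilde{w}_0)-F^{\star})^2$; the gradient count is $S(n + 2m\hat{b}) = \BigO{Sn}$ since $m\hat{b}\le n$; and the proximal count is $S(m+1)$, which with $m+1 \le (n+\hat{b})/\hat{b}$ and $\hat{b}\le\sqrt{n}$ gives $\Tc_{\prox}$.

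I expect the main obstacle to be the first step: selecting $(c_t,r_t,s_t)$ so that the telescoped squared-distance terms vanish \emph{exactly} against the backward rule \eqref{eq:adaptive_update_param_minibatch}, rather than merely up to a harmless constant, since the $-L\gamma_t^2$ contribution inside $\kappa_t$ and the $\eta$ inside the recursion's denominator must be reconciled with care. The $\Sigma_m$ quadratic is the second delicate point, but it is self-contained once the telescoping-of-squares identity is in place.
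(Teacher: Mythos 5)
Your proposal follows the paper's own route almost step for step: the paper likewise instantiates Lemma~\ref{le:key_est1} with $c_t = r_t = 1$ and $s_t = \gamma_t$ (so $\beta_t = (1+2\eta^2)\gamma_t$ and $\kappa_t = \gamma_t(\delta - L\gamma_t)$), swaps the order of summation, forces the coefficient of each $\Exp{\Vert\widehat{w}_{t+1}^{(s)} - w_t^{(s)}\Vert^2}$ to vanish to obtain the backward recursion, telescopes over $s$ with $b_s = n$ killing the variance term via \eqref{eq:mini_batch_est1b}, and finishes with the same bookkeeping. Two caveats. First, solving $\kappa_t = \rho L^2\gamma_t^2\sum_{j>t}\beta_j$ at equality gives $\gamma_t = \delta/\big(L\big[1 + \omega_{\eta}L\sum_{j>t}\gamma_j\big]\big)$, with a $1$ rather than an $\eta$ in the denominator; this is the rule the paper actually uses in its appendix, and the $\eta$ in the displayed recursion \eqref{eq:adaptive_update_param_minibatch} is an internal inconsistency of the paper. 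Your claim that the equality solution is ``precisely'' the $\eta$-version is therefore off: since $\eta < 2/3 < 1$, the $\eta$-version yields strictly larger $\gamma_t$ that violate the descent condition, so you should carry the $1$-version into your second paragraph, where you currently write $\omega_{\eta}L^2\gamma_t T_{t+1} = \delta - L\eta\gamma_t$. Second, your lower bound on $\Sigma_m$ via the identity $T_t^2 - T_{t+1}^2 = 2\gamma_t T_{t+1} + \gamma_t^2$ and discarding $\sum_t\gamma_t^2$ is a genuinely different, simpler argument than the paper's Lemma~\ref{le:adaptive_step_size}, which keeps $S_m^2 := \sum_t\gamma_t^2$ and absorbs it with the Cauchy--Schwarz bound $(m+1)S_m^2 \geq \Sigma_m^2$; your version gives $2\delta\omega_{\eta}(m+1)$ instead of $2\delta\omega_{\eta}m$ inside the square root, which is harmless for the $\BigO{\sqrt{n}}$ conclusion but means the explicit constants $4\sqrt{6}$ and $20\sqrt{6}$ would come out slightly different and need rechecking.
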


Alternatively,  Theorem \ref{th:convergence_composite_finite_sum_b2} below shows the convergence of Algorithm~\ref{alg:prox_sarah} for  the constant step-size case, whose proof is given in Appendix~\ref{apdx:th:convergence_composite_finite_sum_b2}.

\begin{theorem}\label{th:convergence_composite_finite_sum_b2}
Assume that we apply Algorithm~\ref{alg:prox_sarah} to solve \eqref{eq:finite_sum}, where the estimators $v_0^{(s)}$ and $v_t^{(s)}$ are defined by \eqref{eq:vt_finite_sum} such that $b_s = b \in [n]$ and $\hat{b}_t^{(s)} = \hat{b} \in [n-1]$.

Let us choose constant step-sizes $\gamma_t = \gamma$ and $\eta_t = \eta$ as
\begin{equation}\label{eq:mini_batch_step_size}
\gamma := \frac{1}{L\sqrt{\omega m}}~~~\text{and}~~~\eta := \frac{2\sqrt{\omega m}}{4\sqrt{\omega m} + 1},~~\text{where}~~\omega :=  \frac{3(n-\hat{b})}{2\hat{b}(n-1)}~~\text{and}~~\hat{b} \in [1,\sqrt{n}].
\end{equation}
Then, under Assumptions~\ref{as:A1} and \ref{as:A2}, if we choose $m := \big\lfloor \frac{n}{\hat{b}}\big\rfloor$, $b_s  := n$, and $\widetilde{w}_T ~\sim \Ub\big(\sets{w_t^{(s)}}_{t=0\to m}^{s=1 \to S}\big)$, then
the number of outer iterations $S$ to achieve $\Exp{\norms{G_{\eta}(\widetilde{w}_T)}^2} \leq \varepsilon^2$ does not exceed 
\begin{equation*}
S := \frac{16\sqrt{3}L}{\sqrt{2n}\varepsilon^2}\big[F(\widetilde{w}_0) - F^{\star}\big].
\end{equation*}
Moreover, if $n \leq  \frac{384L^2}{\varepsilon^4}\big[F(\widetilde{w}_0) - F^{\star}\big]^2$, then $S \geq 1$.

Consequently, the number of stochastic gradient evaluations $\Tc_{\mathrm{grad}}$ does not exceed
\begin{equation*}
\Tc_{\mathrm{grad}} := \frac{16\sqrt{3}L\sqrt{n}}{\sqrt{2}\varepsilon^2}\big[F(\widetilde{w}_0) - F^{\star}\big] = \BigO{ \frac{L\sqrt{n}}{\varepsilon^2}\big[F(\widetilde{w}_0) - F^{\star}\big]}.
\end{equation*}
The number of $\prox_{\eta\psi}$ operations does not exceed $\Tc_{\prox} :=  \frac{16\sqrt{3}L(\sqrt{n}+1)}{\hat{b}\sqrt{2}\varepsilon^2}\big[F(\widetilde{w}_0) - F^{\star}\big]$.
\end{theorem}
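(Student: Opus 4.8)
The plan is to specialize Lemma~\ref{le:constant_stepsize} to the finite-sum setting and then telescope across the outer loop. First I would note that the choice $b_s = n$ forces the snapshot estimator in \eqref{eq:vt_finite_sum} to be exact, $v_0^{(s)} = \frac{1}{n}\sum_{j=1}^n\nabla{f_j}(w_0^{(s)}) = \nabla{f}(w_0^{(s)})$, so that $\bar{\sigma}^{(s)} = \Exp{\norms{v_0^{(s)} - \nabla{f}(w_0^{(s)})}^2} = 0$ and the last term of \eqref{eq:key_est2} drops out. Discarding the nonnegative terms $\sum_{t=0}^m\Exp{\sigma_t^{(s)}} \geq 0$, estimate \eqref{eq:key_est2} reduces to $\frac{\gamma\eta^2}{2}\sum_{t=0}^m\Exp{\norms{G_{\eta}(w_t^{(s)})}^2} \leq \Exp{F(w_0^{(s)})} - \Exp{F(w_{m+1}^{(s)})}$. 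Since $w_0^{(s)} = \widetilde{w}_{s-1}$ and $\widetilde{w}_s = w_{m+1}^{(s)}$ by Steps~\ref{step:o2} and \ref{step:o5}, summing over $s = 1,\dots,S$ telescopes the right-hand side to $F(\widetilde{w}_0) - \Exp{F(\widetilde{w}_S)} \leq F(\widetilde{w}_0) - F^{\star}$ by Assumption~\ref{as:A1}.

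Next, because $\widetilde{w}_T$ is drawn uniformly from the $S(m+1)$ iterates $\sets{w_t^{(s)}}$, the left-hand side averages to $\Exp{\norms{G_{\eta}(\widetilde{w}_T)}^2} = \frac{1}{S(m+1)}\sum_{s=1}^S\sum_{t=0}^m\Exp{\norms{G_{\eta}(w_t^{(s)})}^2}$, giving $\Exp{\norms{G_{\eta}(\widetilde{w}_T)}^2} \leq \frac{2[F(\widetilde{w}_0) - F^{\star}]}{\gamma\eta^2S(m+1)}$. It then remains only to lower-bound the combined factor $\gamma\eta^2(m+1)$ under the rule \eqref{eq:mini_batch_step_size}. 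Writing $a := \sqrt{\omega m}$, the rule yields $\gamma\eta^2 = \frac{4a}{L(4a+1)^2}$, hence $\gamma\eta^2(m+1) = \frac{4a(m+1)}{L(4a+1)^2}$.

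Reducing this factor to a clean multiple of $\sqrt{n}/L$ is the main obstacle, and I expect it to require three elementary but careful estimates that all exploit $m = \lfloor n/\hat{b}\rfloor$, $\omega = \frac{3(n-\hat{b})}{2\hat{b}(n-1)}$, and $\hat{b} \in [1,\sqrt{n}]$: (i) $m+1 > n/\hat{b}$, from $\lfloor n/\hat{b}\rfloor > n/\hat{b} - 1$; (ii) $\omega \leq \frac{3}{2\hat{b}}$ since $\frac{n-\hat{b}}{n-1} \leq 1$, whence $a = \sqrt{\omega m} \leq \frac{1}{\hat{b}}\sqrt{3n/2}$; and (iii) the scalar inequality $\frac{4a}{(4a+1)^2} \geq \frac{4}{25a}$, which is equivalent to $(9a+1)(a-1) \geq 0$ and thus holds for $a \geq 1$ (I would verify $a \geq 1$ on the admissible range, the extreme $\hat{b} = \sqrt{n}$ giving $a^2 = \frac{3}{2}\cdot\frac{\sqrt{n}}{\sqrt{n}+1} \geq 1$ for $n \geq 4$). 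Chaining (i)--(iii) gives $\gamma\eta^2(m+1) > \frac{4an}{L\hat{b}(4a+1)^2} \geq \frac{4n}{25L\hat{b}a} \geq \frac{4\sqrt{2n}}{25\sqrt{3}L} \geq \frac{\sqrt{2n}}{8\sqrt{3}L}$, so that $\Exp{\norms{G_{\eta}(\widetilde{w}_T)}^2} \leq \frac{16\sqrt{3}L[F(\widetilde{w}_0) - F^{\star}]}{\sqrt{2n}\,S}$. Setting the right-hand side to $\varepsilon^2$ and solving for $S$ produces the stated $S := \frac{16\sqrt{3}L}{\sqrt{2n}\varepsilon^2}[F(\widetilde{w}_0) - F^{\star}]$; the requirement $S \geq 1$ is then equivalent, after squaring, to $n \leq \frac{384L^2}{\varepsilon^4}[F(\widetilde{w}_0) - F^{\star}]^2$.

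Finally, the two oracle counts are bookkeeping over the $S$ outer iterations. Each outer loop spends $n$ component-gradient evaluations on the full snapshot $v_0^{(s)}$ and $2\hat{b}$ on each of the $m$ inner SARAH updates; since $m\hat{b} \leq n$, this is $n + 2m\hat{b} \leq 3n = \BigO{n}$ per outer loop, so $\Tc_{\mathrm{grad}} = \BigO{Sn} = \BigO{L\sqrt{n}\,\varepsilon^{-2}[F(\widetilde{w}_0) - F^{\star}]}$. Each outer loop performs one $\prox_{\eta\psi}$ per inner step (Step~\ref{step:i3}) plus the initial one (Step~\ref{step:o3}), i.e.\ $m+1 \leq (n+\hat{b})/\hat{b}$ calls; multiplying by $S$ and using $\hat{b} \leq \sqrt{n}$ to bound $\frac{n+\hat{b}}{\sqrt{n}\hat{b}} \leq \frac{\sqrt{n}+1}{\hat{b}}$ then gives $\Tc_{\prox} \leq \frac{16\sqrt{3}L(\sqrt{n}+1)}{\hat{b}\sqrt{2}\varepsilon^2}[F(\widetilde{w}_0) - F^{\star}]$, completing the proof.
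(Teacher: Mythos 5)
Your proposal is correct and follows essentially the same route as the paper's proof in Appendix~B.4: invoke Lemma~\ref{le:constant_stepsize}, observe that $b_s = n$ kills the variance term $\bar{\sigma}^{(s)}$, drop the nonnegative $\sigma_t^{(s)}$ terms, telescope over the outer loop, and then lower-bound $\gamma\eta^2(m+1)$ by $\frac{\sqrt{2n}}{8\sqrt{3}L}$ using $m = \lfloor n/\hat{b}\rfloor$ and $\hat{b}\le\sqrt{n}$. The only difference is cosmetic: you bound the scalar factor via $\frac{4a}{(4a+1)^2}\ge\frac{4}{25a}$ for $a=\sqrt{\omega m}\ge 1$, whereas the paper bounds $\frac{(4\sqrt{\omega m}+1)^2}{4\sqrt{\omega m}(m+1)}\le\frac{8\sqrt{\omega}}{\sqrt{m}}$ directly; both yield the same final constant, and your oracle-count bookkeeping matches the paper's up to the same unoptimized constants it acknowledges in Remark~\ref{re:constant_in_bounds}.
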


Note that the condition $n \leq \BigO{\varepsilon^{-4}}$ is to guarantee that $S \geq 1$ in Theorems~\ref{th:convergence_composite_finite_sum_b} and \ref{th:convergence_composite_finite_sum_b2}.
In this case, our complexity bound is $\BigO{n^{1/2}\varepsilon^{-2}}$.
Otherwise, i.e., $n > \BigO{\varepsilon^{-4}}$, then our complexity becomes  $\BigO{n +  n^{1/2}\varepsilon^{-2}}$ due to the full gradient snapshots.
In the non-composite setting, this complexity is the same as SPIDER \citep{fang2018spider}, and the range of our mini-batch size $\hat{b} \in [1, \sqrt{n}]$, which is the same as in SPIDER, instead of fixed $\hat{b} = \lfloor\sqrt{n}\rfloor$ as in SpiderBoost \citep{wang2018spiderboost}.
Note that we can extend our mini-batch size $\hat{b}$ such that $\sqrt{n} < \hat{b} \leq n-1$, but our complexity bound is no longer the best-known one.

The step-size $\eta$ in \eqref{eq:mini_batch_step_size} can be bounded by $\eta \in [\frac{2}{5}, \frac{1}{2}]$ for any batch size $\hat{b}$ and $m$ instead of fixing at $\eta = \frac{1}{2}$.
Nevertheless, this interval can be enlarged by choosing different $c_t$ and $r_t$ in Lemma~\ref{le:key_est1}.
For example, if we choose $c_t := \frac{1}{2}$ and $r_t := 2$ in Lemma~\ref{le:key_est1}, then $\eta$ can go up to $\frac{2}{3}$.
The step-size $\gamma \in (0, 1]$ can change from a small to a large value close to $1$ as the batch-size $\hat{b}$ and the epoch length $m$ change as we will discuss in Subsection~\ref{subsubsec:mini_batch_step_size}.

\beforesubsec
\subsection{Lower-bound complexity for the finite-sum problem \eqref{eq:finite_sum}}
\aftersubsec
Let us analyze a special case of \eqref{eq:finite_sum} with $\psi = 0$.
We consider any stochastic first-order methods to generate an iterate sequence $\set{w_t}$ as follows: 
\begin{equation}\label{eq:first_order_alg}
[w_t, i_t] := \Ac^{t-1}\left(\omega, \nabla{f_{i_0}}(w^0), \nabla{f_{i_1}}(w^1), \cdots, \nabla{f_{i_{t-1}}}(w^{t-1})\right),~~t\geq 1,
\end{equation}
where $\Ac^{t-1}$ are measure mapping into $\R^{d+1}$, $f_{i_t}$ is an individual function chosen by $\Ac^{t-1}$ at iteration $t$, $\omega\sim\Ub([0, 1])$ is a random vector, and $[w^0, i_0] := \Ac^0(\omega)$.
Clearly, Algorithm~\ref{alg:prox_sarah} can be cast as a special case of \eqref{eq:first_order_alg}.
As shown in \citep[Theorem 3]{fang2018spider} and later in \citep[Theorem 4.5.]{zhou2019lower}, under Assumptions~\ref{as:A1} and \ref{as:A2}, for any $L > 0$ and $2 \leq n \leq \BigO{L^2\left[F(w^0) - F^{\star}\right]^2\varepsilon^{-4}}$, there exists a dimension $d = \widetilde{\mathcal{O}}(L^2\left[F(w^0) - F^{\star}\right]^2n^2\varepsilon^{-4})$ such that the lower-bound complexity of Algorithm~\ref{alg:prox_sarah} to produce an output $\widetilde{w}_T$ such that $\Exp{\norm{\nabla{f}(\widetilde{w}_T)}^2} \leq \varepsilon^2$ is $\Omega\left(\frac{L\left[F(w^0) - F^{\star}\right]\sqrt{n}}{\varepsilon^2}\right)$.
This lower-bound  clearly matches the upper bound $\Tc_{\mathrm{grad}}$ in Theorems~\ref{th:convergence_composite_finite_sum_b} and \ref{th:convergence_composite_finite_sum_b2} up to a given constant factor.

\beforesubsec
\subsection{Mini-batch size and learning rate trade-offs}\label{subsubsec:mini_batch_step_size}
\aftersubsec
Although our step-size defined by \eqref{eq:mini_batch_step_size} in the single sample case is much larger than that of ProxSVRG in \citep[Theorem 1]{reddi2016proximal}, it still depends on $\sqrt{m}$, where $m$ is the epoch length.
To obtain larger step-sizes, we can choose $m$ and the mini-batch size $\hat{b}$ using the same trick as in  \citep[Theorem 2]{reddi2016proximal}.
Let us first fix $\gamma := \bar{\gamma} \in (0, 1]$.
From \eqref{eq:mini_batch_step_size}, we have $\omega m = \frac{1}{L^2\bar{\gamma}^2}$.
It makes sense to choose $\bar{\gamma}$ close to $1$ in order to use new information from $\widehat{w}^{(s)}_{t+1}$ instead of the old one in $w^{(s)}_t$.

Our goal is to choose $m$ and $\hat{b}$ such that $\omega m = \frac{3(n-\hat{b})m}{2\hat{b}(n-1)} = \frac{1}{L^2\bar{\gamma}^2}$.
If we define $C := \frac{2}{3L^2\bar{\gamma}^2}$, then the last condition implies that $\hat{b} := \frac{mn}{Cn + m - C} \leq \frac{m}{C}$ provided that $m \geq C$.
Our suggestion is to choose
\vspace{-1ex}
\begin{equation}\label{eq:mini_batch_vs_stepsize}
\gamma := \bar{\gamma} \in (0, 1], ~~\hat{b} := \Big\lfloor\frac{mn}{Cn+m-C} \Big\rfloor,~~\text{and}~~\eta := \frac{2}{4 + L\bar{\gamma}}.
\vspace{-1ex}
\end{equation}
If we choose $m = \lfloor n^{1/3}\rfloor$, then $\hat{b} = \BigO{n^{1/3}} \leq \frac{n^{1/3}}{C}$.
This mini-batch size is much smaller than $\lfloor n^{2/3}\rfloor$ in ProxSVRG.
Note that, in ProxSVRG, they set $\gamma := 1$ and $\eta := \frac{1}{3L}$.

In ProxSpiderBoost \citep{wang2018spiderboost}, $m$ and the mini-batch size $\hat{b}$ were chosen as $m = \hat{b} = \lfloor n^{1/2}\rfloor$ so that they can use constant step-sizes $\gamma = 1$ and $\eta = \frac{1}{2L}$.
In our case, if $\gamma = 1$, then $\eta = \frac{2}{4 + L}$. Hence, if $L = 1$, then $\eta_{\mathrm{ProxSpiderBoost}} = \frac{1}{2} > \eta_{\mathrm{ProxSARAH}} = \frac{2}{5} > \eta_{\mathrm{ProxSVRG}} = \frac{1}{3}$.
But if $L > 4$, then our step-size $\eta_{\mathrm{ProxSARAH}}$ dominates $\eta_{\mathrm{ProxSpiderBoost}}$.
However, if we choose  $c_t := \frac{1}{2}$, $r_t := 2$ in Lemma~\ref{le:key_est1}, then $\eta_{\mathrm{ProxSARAH}}= \frac{2}{3} > \eta_{\mathrm{ProxSpiderBoost}} = \frac{1}{2}$.
%
%

If we choose $m = \BigO{n^{1/2}}$ and $\hat{b} = \BigO{n^{1/2}}$, then we maintain the same complexity bound $\BigO{n^{1/2}\varepsilon^{-2}}$ as in Theorems \ref{th:convergence_composite_finite_sum_b} and \ref{th:convergence_composite_finite_sum_b2}.
However, if we choose $m = \BigO{n^{1/3}}$ and $\hat{b} = \BigO{n^{1/3}}$, then the complexity bound becomes $\BigO{(n^{2/3} + n^{1/3})\varepsilon^{-2}}$, which is similar to ProxSVRG.

\beforesubsec
\subsection{Convergence analysis for the composite expectation problem \eqref{eq:sopt_prob}}
\aftersubsec
In this subsection, we apply Algorithm~\ref{alg:prox_sarah} to solve the general expectation setting \eqref{eq:sopt_prob}. 
In this case, we generate the snapshot at \textbf{Step~\ref{step:o2}} of Algorithm~\ref{alg:prox_sarah} as follows:
\begin{equation}\label{eq:snapshot_v0_2}
v_0^{(s)} := \frac{1}{b_s}\sum_{\zeta_i^{(s)}\in\Bc_s}\nabla_w{f}(w_0^{(s)}; \zeta_i^{(s)}), 
\end{equation}
where $\Bc_s := \set{\zeta_1^{(s)},\cdots, \zeta_{b_s}^{(s)}}$ is a mini-batch of i.i.d. realizations of $\xi$ at the $s$-th outer iteration and independent of $\xi_{t}$ from the inner loop, and $b_s := \vert \Bc_s\vert = b \geq 1$ is fixed.

Now, we analyze the convergence of Algorithm~\ref{alg:prox_sarah} for solving \eqref{eq:sopt_prob} using \eqref{eq:snapshot_v0_2} above.
For simplicity of discussion, we only consider the constant step-size case.
The adaptive step-size variant can be derived similarly as in Theorem~\ref{th:convergence_composite_finite_sum_b} and we omit the details.
The proof of the following theorem can be found in Appendix~\ref{apdx:th:convergence_composite_expectation}.

\begin{theorem}\label{th:convergence_composite_expectation}
Let us apply Algorithm~\ref{alg:prox_sarah} to solve \eqref{eq:sopt_prob} using \eqref{eq:snapshot_v0_2} for $v_0^{(s)}$ at \textbf{Step~\ref{step:o2}} of Algorithm~\ref{alg:prox_sarah} with fixed outer loop batch-size $b_s = b \geq 1$ and inner loop batch-size $\hat{b} := \vert \Bc_t^{(s)}\vert \geq 1$.

If we choose fixed step-sizes $\gamma$ and $\eta$ as 
\begin{equation}\label{eq:step_size3}
\gamma := \frac{1}{L\sqrt{\bar{\omega} m}}~~~~~\text{and}~~~~\eta := \frac{2\sqrt{\bar{\omega} m}}{4\sqrt{\bar{\omega} m} + 1},~~~\text{with}~~\bar{\omega} := \frac{3}{2\hat{b}},
\end{equation}
then, under Assumptions~\ref{as:A1} and \ref{as:A2}, we have the following estimate:
\begin{equation}\label{eq:key_est6}
\frac{1}{(m+1)S}\sum_{s=1}^S\sum_{t=0}^m \Exp{\Vert G_{\eta}(w_t^{(s)})\Vert^2} \leq \frac{2}{\gamma\eta^2(m+1)S}\big[F(\widetilde{w}_0) - F^{\star}\big] + \frac{3 \sigma^2}{2\eta^2b}.
\end{equation}
In particular, if we choose $b := \left\lfloor\frac{75\sigma^2}{\varepsilon^2}\right\rfloor$ and $m := \left\lfloor\frac{\sigma^2}{\hat{b}\varepsilon^2}\right\rfloor$ for $\hat{b} \leq \frac{\sigma^2}{\varepsilon^2}$, then after at most
\begin{equation*}
S :=  \frac{32 L[F(\widetilde{w}_0) - F^{\star}]}{\sigma\varepsilon}
\end{equation*}
outer iterations, we obtain $\Exp{\norms{G_{\eta}(\widetilde{w}_T)}^2} \leq \varepsilon^2$, where  $\widetilde{w}_T ~\sim \Ub\big(\sets{w_t^{(s)}}_{t=0\to m}^{s=1\to S}\big)$.

Consequently, the number of individual stochastic gradient evaluations $\nabla_wf(w_t^{(s)}; \xi_t)$  and the number of proximal operations $\prox_{\eta\psi}$, respectively do not exceed:
\begin{equation*}
\Tc_{\mathrm{grad}} :=   \frac{2464\sigma L[F(\widetilde{w}_0) - F^{\star}]}{\varepsilon^3},~~~\text{and}~~\Tc_{\prox} := \frac{32\sigma L[F(\widetilde{w}_0) - F^{\star}]}{\hat{b}\varepsilon^2}.
\end{equation*}
\end{theorem}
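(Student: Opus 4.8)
The plan is to convert the single-epoch descent inequality of Lemma~\ref{le:constant_stepsize} into a global estimate by telescoping over the outer iterations, and then to balance the two resulting error terms through the prescribed choices of $b$, $m$, and $S$. I would first instantiate Lemma~\ref{le:constant_stepsize} in the expectation regime, noting that the choice $\bar{\omega} = \frac{3}{2\hat{b}}$ in \eqref{eq:step_size3} matches the parameter $\omega$ of that lemma, so the step-sizes \eqref{eq:step_size3} are exactly those of \eqref{eq:constant_stepsize}. This yields, for each outer index $s$,
\begin{equation*}
\Exp{F(w_{m+1}^{(s)})} \leq \Exp{F(w_0^{(s)})} - \frac{\gamma\eta^2}{2}\sum_{t=0}^m\Exp{\norms{G_{\eta}(w_t^{(s)})}^2} - \sum_{t=0}^m\Exp{\sigma_t^{(s)}} + \frac{\gamma\theta}{2}(m+1)\bar{\sigma}^{(s)}.
\end{equation*}
The essential difference from the finite-sum analysis is that the snapshot $v_0^{(s)}$ is now the mini-batch estimator \eqref{eq:snapshot_v0_2}, so $\bar{\sigma}^{(s)}$ does not vanish. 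Instead I would invoke the variance bound \eqref{eq:mini_batch_est1} of Lemma~\ref{le:mini_batch} to get $\bar{\sigma}^{(s)} = \Exp{\norms{v_0^{(s)} - \nabla{f}(w_0^{(s)})}^2} \leq \frac{\sigma^2}{b}$, drop the nonnegative terms $\sigma_t^{(s)} \geq 0$, and use $\theta = 1 + 2\eta^2 \leq \frac{3}{2}$.

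Next I would telescope. Since $w_0^{(s)} = \widetilde{w}_{s-1}$ and $w_{m+1}^{(s)} = \widetilde{w}_s$, summing over $s = 1,\dots,S$ collapses the function-value terms to $F(\widetilde{w}_0) - \Exp{F(\widetilde{w}_S)} \leq F(\widetilde{w}_0) - F^{\star}$ by Assumption~\ref{as:A1}. Dividing through by $\frac{\gamma\eta^2}{2}(m+1)S$ and observing that the uniform random output satisfies $\Exp{\norms{G_{\eta}(\widetilde{w}_T)}^2} = \frac{1}{(m+1)S}\sum_{s=1}^S\sum_{t=0}^m\Exp{\norms{G_{\eta}(w_t^{(s)})}^2}$ produces exactly \eqref{eq:key_est6}, with the optimization-error term $\frac{2[F(\widetilde{w}_0)-F^{\star}]}{\gamma\eta^2(m+1)S}$ and the variance term $\frac{3\sigma^2}{2\eta^2 b}$.

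For the explicit complexity I would force each of the two terms of \eqref{eq:key_est6} to be at most $\frac{\varepsilon^2}{2}$. Substituting $b = \lfloor 75\sigma^2/\varepsilon^2\rfloor$ handles the variance term once $\eta$ is bounded below; substituting $m = \lfloor\sigma^2/(\hat{b}\varepsilon^2)\rfloor$ together with \eqref{eq:step_size3} turns the combined factor $\gamma\eta^2(m+1)$ into a quantity of order $\frac{\sigma}{L\varepsilon}$ in which the $\hat{b}$-dependence cancels, so the optimization-error term drops below $\frac{\varepsilon^2}{2}$ as soon as $S \geq \frac{32L[F(\widetilde{w}_0)-F^{\star}]}{\sigma\varepsilon}$. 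The final step is oracle counting: each outer iteration spends $b$ gradients on the snapshot and $2\hat{b}$ per inner step, i.e. $b + 2m\hat{b}$ gradient evaluations and $m+1$ proximal calls; multiplying by $S$ and inserting the chosen values gives $b + 2m\hat{b} = \BigO{\sigma^2\varepsilon^{-2}}$ with $\Tc_{\mathrm{grad}} = S(b + 2m\hat{b})$ and $\Tc_{\prox} = S(m+1)$, and plugging in $S$, $b$, $m$ reproduces the stated constants ($32 \cdot 77 = 2464$).

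The main obstacle is this last balancing. The step-size $\eta = \frac{2\sqrt{\bar{\omega} m}}{4\sqrt{\bar{\omega} m}+1}$ depends on $\hat{b}$ and $m$ only through $\bar{\omega} m$, and one must verify that, under the chosen $m$, it stays bounded away from $0$ so that both $\frac{1}{\eta^2}$ and $\frac{1}{\gamma\eta^2(m+1)}$ remain controlled and, crucially, so that the $\hat{b}$-dependence genuinely cancels in $\gamma\eta^2(m+1)$. The remaining care is purely bookkeeping: tracking how the floor operations in $b$ and $m$ perturb the absolute constants while keeping enough slack that the clean values $75$, $32$, and hence $2464$ still hold.
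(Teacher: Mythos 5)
Your proposal is correct and follows essentially the same route as the paper's proof: instantiate Lemma~\ref{le:constant_stepsize} with $\bar{\omega}=\tfrac{3}{2\hat{b}}$, bound $\bar{\sigma}^{(s)}\le \sigma^2/b$ via \eqref{eq:mini_batch_est1}, drop the nonnegative $\sigma_t^{(s)}$ terms, telescope over $s$, and then balance the two terms of \eqref{eq:key_est6} using $\eta\ge\tfrac{2}{5}$ and the cancellation $\tfrac{1}{\gamma\eta^2(m+1)}\le\tfrac{8L}{\sqrt{\hat{b}m}}$ with $\sqrt{\hat{b}m}=\sigma/\varepsilon$, exactly as the paper does. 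The oracle count $S(b+2m\hat{b})$ with $32\cdot 77=2464$ matches the paper's bookkeeping as well.
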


Theorem~\ref{th:convergence_composite_expectation} achieves the best-known complexity $\BigO{ \sigma L \varepsilon^{-3}}$ for the composite expectation problem \eqref{eq:sopt_prob}  as long as $\sigma \leq \frac{32L[F(\widetilde{w}_0) - F^{\star}]}{\varepsilon^2}$. 
Otherwise, our complexity is  $\BigO{\sigma \varepsilon^{-3} + \sigma^2\varepsilon^{-2}}$ due to the snapshot gradient for evaluating $v_0^{(s)}$. 
This complexity is the same as  SPIDER \citep{fang2018spider} in the non-composite setting and ProxSpiderBoost \citep{wang2018spiderboost} in the mini-batch setting.
Note that our method does not require to perform mini-batch in the inner loop, i.e., it is independent of $\hat{\Bc}_t^{(s)}$, and the mini-batch is independent of the number of iterations $m$ of the inner loop, while in \citep{wang2018spiderboost}, the mini-batch size $\vert \hat{\Bc}_t^{(s)}\vert$ must be proportional to $\sqrt{\vert\Bc_s\vert} = \BigO{\varepsilon^{-1}}$, where $\Bc_s$ is the mini-batch of the outer loop.
This is perhaps the reason why ProxSpiderBoost  can take a large constant step-size $\eta = \frac{1}{2L}$ as discussed in Subsection~\ref{subsubsec:mini_batch_step_size}.

\begin{remark}\label{re:constant_in_bounds}
We have not attempted to optimize the constants in the complexity bounds of all theorems above, Theorem \ref{th:convergence_composite_finite_sum_b}, Theorem~\ref{th:convergence_composite_finite_sum_b2}, and Theorem~\ref{th:convergence_composite_expectation}.
Our analysis can be refined to obtain smaller constants in these complexity bounds.
\end{remark}

\beforesec
\section{Adaptive methods for non-composite problems}\label{sec:extensions}
\aftersec
In this section, we consider the non-composite settings of \eqref{eq:sopt_prob} and  \eqref{eq:finite_sum} as  special cases  of Algorithm~\ref{alg:prox_sarah}. 
Note that if we solely apply Algorithm~\ref{alg:prox_sarah} with constant stepsizes to solve the non-composite case of \eqref{eq:sopt_prob} and \eqref{eq:finite_sum} when $\psi \equiv 0$, then by using the same step-size as in Theorems~\ref{th:convergence_composite_finite_sum_b}, \ref{th:convergence_composite_finite_sum_b2}, and \ref{th:convergence_composite_expectation},  we can obtain the same complexity as stated in Theorems~\ref{th:convergence_composite_finite_sum_b}, \ref{th:convergence_composite_finite_sum_b2}, and \ref{th:convergence_composite_expectation}, respectively.
However, we will modify our proof of Theorem~\ref{th:convergence_composite_finite_sum_b} to take advantage of the extra term $\sum_{t=0}^m\Exp{\sigma_t^{(s)}}$ in Lemma~\ref{le:key_est1}. 
The proof of this theorem is given in Appendix~\ref{apdx:thm:noncomposite_convergence}. 

\begin{theorem}\label{thm:noncomposite_convergence}
Let $\sets{w^{(s)}_t}$ be the sequence generated by a variant of Algorithm~\ref{alg:prox_sarah} to solve the non-composite instance of \eqref{eq:sopt_prob} or  \eqref{eq:finite_sum} using the following update:
\begin{equation}\label{eq:non_composite_update}
w_{t+1}^{(s)} := w_t^{(s)} - \hat{\eta}_tv_t^{(s)}
\end{equation}
for both Step~\ref{step:o3} and Step~\ref{step:i3} and using  \eqref{eq:snapshot_v0_2} for the expectation problem and \eqref{eq:vt_finite_sum} for the finite-sum problem.

Let $\rho := \frac{1}{\hat{b}}$ for the expectation problem and $\rho := \frac{n-\hat{b}}{\hat{b}(n-1)}$ for the finite-sum problem, and  the step-size $\hat{\eta}_t$ is computed recursively in a backward mode from $t=m$ down to $t=0$ as
\begin{equation}\label{eq:adaptive_stepsize}
\hat{\eta}_m = \frac{1}{L},~~\text{and}~~\hat{\eta}_{m-t} := \frac{1}{L\big(1 + \rho L\sum_{j=1}^t\hat{\eta}_{m-j+1}\big)},~~~\forall t=1,\cdots, m.
\end{equation}
Then, we have $\Sigma_m := \sum_{t=0}^m\hat{\eta}_t \geq \frac{2(m+1)}{(\sqrt{2\rho m+1}+1)L}$.

\noindent Suppose that Assumptions~\ref{as:A1} and \ref{as:A2} hold, and  $\widetilde{w}_T \sim \Ub_{p}\big(\sets{w_t^{(s)}}_{t=0\to m}^{s=1\to S}\big)$ such that
\begin{equation*} 
\Prob{\widetilde{w}_T = w_t^{(s)}} = p_{(s-1)m+t} := \frac{\hat{\eta}_t}{S\Sigma_m},~~~~\forall s=1,\cdots, S, ~t = 0,\cdots, m.
\end{equation*}
Then, we have
\begin{equation}\label{eq:key_est11}
\begin{array}{ll}
\Exp{\Vert \nabla{f}(\widetilde{w}_T)\Vert^2} &= \dfrac{1}{S\Sigma_m}\displaystyle\sum_{s=1}^S\displaystyle\sum_{t=0}^m\hat{\eta}_t\Exp{\Vert \nabla{f}(w_t^{(s)})\Vert^2} \vspace{0ex}\\
& \leq \dfrac{(\sqrt{2\nu m+1}+1)L}{S(m+1)} \big[f(\widetilde{w}_0) - f^{\star}\big] +  \dfrac{1}{S}\displaystyle\sum_{s=1}^S\hat{\sigma}_s,
\end{array}
\end{equation}
where  $\hat{\sigma}_s := \Exp{\norms{\nabla{f}(w_0^{(s)}) - v_0^{(s)}}^2}$.

We consider two cases:
\begin{itemize}
\vspace{-1ex}
\item[$\mathrm{(a)}$]\textbf{The finite-sum case:} 
If we apply this variant of Algorithm~\ref{alg:prox_sarah} to solve the non-composite instance of \eqref{eq:finite_sum} $($i.e. $\psi = 0$$)$ using full gradient snapshot $b_s := n$, $m := \lfloor \frac{n}{\hat{b}}\rfloor$, and $\hat{b} \in [1, \sqrt{n}]$, then
\begin{equation}\label{eq:key_est11a}
\Exp{\Vert \nabla{f}(\widetilde{w}_T)\Vert^2} \leq \frac{2L}{S\sqrt{n}}[ f(\widetilde{w}_0) - f^{\star}].
\end{equation}
Consequently, the total of outer iterations $S$ to achieve an $\varepsilon$-stationary point $\widetilde{w}_T$ such that $\Exp{\Vert \nabla{f}(\widetilde{w}_T)\Vert^2} \leq \varepsilon^2$ does not exceed $S := \frac{2L [f(\widetilde{w}_0) - f^{\star}]}{\sqrt{n}\varepsilon^2}$.
The number of individual stochastic gradient evaluations $\nabla{f_i}$ does not exceed $\Tc_{\mathrm{grad}} := \frac{10\sqrt{n}L [f(\widetilde{w}_0) - f^{\star}]}{\varepsilon^2}$.

\item[$\mathrm{(b)}$]\textbf{The expectation case:}
If we apply this variant of Algorithm~\ref{alg:prox_sarah} to solve the non-composite expectation instance of \eqref{eq:sopt_prob} $($i.e. $\psi = 0$$)$ using mini-batch size $b_s = b := \frac{2\sigma^2}{\varepsilon^2}$ for the outer-loop, $m :=  \frac{\sigma^2}{\hat{b}\varepsilon^2}$, and $\hat{b} \leq \frac{\sigma^2}{\varepsilon^2}$, then
\begin{equation}\label{eq:key_est11b}
\Exp{\Vert \nabla{f}(\widetilde{w}_T)\Vert^2} \leq \frac{2L}{S\sqrt{\hat{b}m}}\big[ f(\widetilde{w}_0) - f^{\star}\big] + \frac{\sigma^2}{b}.
\end{equation}
Consequently, the total of outer iterations $S$ to achieve an $\varepsilon$-stationary point $\widetilde{w}_T$ such that $\Exp{\Vert \nabla{f}(\widetilde{w}_T)\Vert^2} \leq \varepsilon^2$ does not exceed $S := \frac{4L[ f(\widetilde{w}_0) - f^{\star}]}{\sigma\varepsilon}$.
The number of individual stochastic gradient evaluations does not exceed $\Tc_{\mathrm{grad}} := \frac{16\sigma L[ f(\widetilde{w}_0) - f^{\star}]}{\varepsilon^3}$, provided that $\sigma \leq \frac{8L[ f(\widetilde{w}_0) - f^{\star}]}{\varepsilon}$.
\end{itemize}
\end{theorem}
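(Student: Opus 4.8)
The plan is to specialize the inner-loop estimate of Lemma~\ref{le:key_est1} to the case $\psi\equiv 0$ and to retain, rather than discard, the nonnegative term $\sum_{t=0}^m\Exp{\sigma_t^{(s)}}$ that was thrown away in the composite analysis of Theorem~\ref{th:convergence_composite_finite_sum_b}. When $\psi\equiv 0$ the proximal step in Step~\ref{step:i3} is the identity, so $\widehat{w}_{t+1}^{(s)} - w_t^{(s)} = -\eta_t v_t^{(s)}$, the update \eqref{eq:non_composite_update} holds with the combined step-size $\hat{\eta}_t = \gamma_t\eta_t$, and the gradient mapping reduces to $G_{\eta_t}(w) = \nabla{f}(w)$. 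First I would substitute these identities into $\sigma_t^{(s)} = \frac{\gamma_t}{2c_t}\Vert \nabla{f}(w_t^{(s)}) - v_t^{(s)} - c_t(\widehat{w}_{t+1}^{(s)} - w_t^{(s)})\Vert^2$ and choose $c_t := 1/\eta_t$; then $-c_t(\widehat{w}_{t+1}^{(s)}-w_t^{(s)}) = v_t^{(s)}$, so the inner expression collapses to exactly $\nabla{f}(w_t^{(s)})$ and $\sigma_t^{(s)} = \frac{\hat{\eta}_t}{2}\Vert\nabla{f}(w_t^{(s)})\Vert^2$. This retained term now supplies the full $\sum_{t=0}^m\frac{\hat{\eta}_t}{2}\Exp{\Vert\nabla{f}(w_t^{(s)})\Vert^2}$ descent and is precisely what buys the larger step-size $\hat{\eta}_m = 1/L$ relative to the composite case.

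Next I would design the backward recursion \eqref{eq:adaptive_stepsize} so that the error-accumulation coefficients cancel. The double sum $\frac{\rho L^2}{2}\sum_t\beta_t\sum_{j\le t}\gamma_{j-1}^2\Exp{\Vert\widehat{w}_j^{(s)}-w_{j-1}^{(s)}\Vert^2}$ in \eqref{eq:key_est1} must be absorbed into the negative $-\frac12\sum_t\kappa_t\Exp{\Vert\widehat{w}_{t+1}^{(s)}-w_t^{(s)}\Vert^2}$ term; swapping the order of summation and matching the coefficient of each $\Exp{\Vert\widehat{w}_{t+1}^{(s)}-w_t^{(s)}\Vert^2}$ yields a per-index inequality that, after inserting $c_t=1/\eta_t$ and the relation $\hat{\eta}_t=\gamma_t\eta_t$, is solved exactly by the backward rule \eqref{eq:adaptive_stepsize}. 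Summing the resulting descent inequality over $t=0,\dots,m$, then over the outer iterations $s=1,\dots,S$, and dividing by $S\Sigma_m$ with the randomized output $\widetilde{w}_T$ produces \eqref{eq:key_est11}; the residual $\frac1S\sum_s\hat{\sigma}_s$ is the average snapshot variance $\Exp{\Vert\nabla{f}(w_0^{(s)})-v_0^{(s)}\Vert^2}$ coming from $\bar{\sigma}^{(s)}$.

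For the lower bound on $\Sigma_m$ I would reuse the argument behind \eqref{eq:Sigma_lower_bound}: set $u_t:=1/\hat{\eta}_{m-t}$, read \eqref{eq:adaptive_stepsize} as $u_{t+1}=u_t+\rho L^2/u_t$ with $u_0=L$, square to obtain $u_{t+1}^2\ge u_t^2+2\rho L^2$ and hence $u_t\ge L\sqrt{1+2\rho t}$, and combine this with the telescoping identity $\Sigma_m=(u_{m+1}-L)/(\rho L^2)$ to reach $\Sigma_m\ge \frac{2(m+1)}{L(\sqrt{2\rho m+1}+1)}$. The two special cases are then routine substitutions: in part~(a) the full-gradient snapshot $b_s=n$ with \eqref{eq:vt_finite_sum} forces $\hat{\sigma}_s=0$, and plugging $m=\lfloor n/\hat{b}\rfloor$, $\hat{b}\in[1,\sqrt{n}]$, $\rho=\frac{n-\hat{b}}{\hat{b}(n-1)}$ makes $\sqrt{2\rho m+1}=\BigO{\sqrt{n}/\hat b}$ and collapses \eqref{eq:key_est11} to \eqref{eq:key_est11a}; in part~(b) the mini-batch snapshot \eqref{eq:snapshot_v0_2} bounds $\hat{\sigma}_s\le\sigma^2/b$ via Lemma~\ref{le:mini_batch}, and the choices $b=2\sigma^2/\varepsilon^2$, $m=\sigma^2/(\hat{b}\varepsilon^2)$ give \eqref{eq:key_est11b}, whence the stated $S$, $\Tc_{\mathrm{grad}}$, and $\Tc_{\prox}$ follow by counting $b_s+2m\hat{b}$ gradient calls per outer loop. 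The main obstacle is the coefficient bookkeeping in the second paragraph: verifying that the backward rule \eqref{eq:adaptive_stepsize} keeps every $\kappa_t$-type coefficient nonnegative while still leaving the full $\frac{\hat{\eta}_t}{2}\Vert\nabla{f}(w_t^{(s)})\Vert^2$ descent, since it is the interplay of $c_t=1/\eta_t$, the retained $\sigma_t^{(s)}$, and the swapped double sum that must conspire exactly for the clean recursion to emerge.
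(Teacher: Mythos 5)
Your proposal matches the paper's own proof in every essential respect: the specialization $\widehat{w}_{t+1}^{(s)}-w_t^{(s)}=-\eta_t v_t^{(s)}$ with the choice $c_t=1/\eta_t$ so that the retained term $\sigma_t^{(s)}$ collapses to $\frac{\hat{\eta}_t}{2}\Vert\nabla f(w_t^{(s)})\Vert^2$, the coefficient-matching that turns the swapped double sum into the per-index conditions solved by the backward rule \eqref{eq:adaptive_stepsize}, and the routine substitutions for cases (a) and (b) are exactly the paper's argument in Appendix~\ref{apdx:thm:noncomposite_convergence}. The one place you diverge is the lower bound on $\Sigma_m$: the paper invokes Lemma~\ref{le:adaptive_step_size}, which sums the defining equalities and solves a quadratic inequality in $\Sigma_m$ via Cauchy--Schwarz, whereas you telescope the reciprocal recursion $u_{t+1}=u_t+\rho L^2/u_t$; your route is arguably cleaner but, after rationalizing, delivers $\Sigma_m\geq \frac{2(m+1)}{L(\sqrt{2\rho (m+1)+1}+1)}$ rather than the stated $\frac{2(m+1)}{L(\sqrt{2\rho m+1}+1)}$ --- a constant-level weakening that leaves all the complexity conclusions intact but does not literally reproduce the claimed inequality.
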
 

Note that the first statement (a) of Theorem~\ref{thm:noncomposite_convergence} covers the nonconvex case of \citep{Nguyen2019_SARAH} by fixing step-size $\hat{\eta}_t = \hat{\eta} = \frac{2}{L(1 + \sqrt{4m+1})}$.
However, this constant step-size is rather small if $m = \BigO{n}$ is large.
Hence, it is better to update $\hat{\eta}_t$ adaptively increasing as in \eqref{eq:adaptive_stepsize}, where $\hat{\eta}_m = \frac{1}{L}$ is a large step-size.
In addition, \citep{Nguyen2019_SARAH} only studies the finite-sum problem. 

Again, by combining the  first statement (a) of Theorem~\ref{thm:noncomposite_convergence} and the lower-bound complexity in  \citep{fang2018spider}, we can conclude that this algorithmic variant still achieves a nearly-optimal complexity  $\BigO{n^{1/2}\varepsilon^{-2}}$ for the non-composite finite-sum problem in \eqref{eq:finite_sum} to find an $\varepsilon$-stationary point in expectation if $n \leq \BigO{\varepsilon^{-4}}$.
In Statement (b), if $\sigma > \frac{8L[ f(\widetilde{w}_0) - f^{\star}]}{\varepsilon}$, then the complexity of our method is $\BigO{\sigma^2\varepsilon^{-2} + \sigma\varepsilon^{-3}}$ due to the gradient snapshot of the size $b = \BigO{\sigma^2\varepsilon^{-2}}$ to evaluate $v_0^{(s)}$.

\beforesec
\section{Numerical experiments}\label{sec:num_experiments}
\aftersec
We present three numerical examples to illustrate our theory and compare our methods with state-of-the-art algorithms in the literature.
We implement $8$ different variants of our ProxSARAH algorithm:
\begin{itemize}
\vspace{-1.5ex}
\item ProxSARAH-v1: Single sample and fixed step-sizes $\gamma \!:=\! \frac{\sqrt{2}}{L\sqrt{3m}}$ and $\eta \!:=\! \frac{2\sqrt{3m}}{4\sqrt{3m} \!+\! \sqrt{2}}$.
\vspace{-1.5ex}
\item ProxSARAH-v2:  $\gamma := 0.95$ and mini-batch size $\hat{b} := \big\lfloor\frac{\sqrt{n}}{C}\big\rfloor$ and $m := \lfloor\sqrt{n}\rfloor$.
\vspace{-1.5ex}
\item ProxSARAH-v3: $\gamma := 0.99$ and mini-batch size $\hat{b} := \big\lfloor\frac{\sqrt{n}}{C}\big\rfloor$ and $m := \lfloor\sqrt{n}\rfloor$.
\vspace{-1.5ex}
\item ProxSARAH-v4:  $\gamma := 0.95$ and mini-batch size $\hat{b} := \big\lfloor\frac{n^{\frac{1}{3}}}{C}\big\rfloor$ and $m := \lfloor n^{\frac{1}{3}}\rfloor$.
\vspace{-1.5ex}
\item ProxSARAH-v5: $\gamma := 0.99$ and mini-batch size $\hat{b} := \big\lfloor\frac{n^{\frac{1}{3}}}{C}\big\rfloor$ and $m := \lfloor n^{\frac{1}{3}}\rfloor$.
\vspace{-1.5ex}
\item ProxSARAH-A-v1:  Single sample (i.e., $\hat{b} = 1$), and adaptive step-sizes.
\vspace{-1.5ex}
\item ProxSARAH-A-v2: $\gamma_m := 0.99$ and mini-batch size $\hat{b} := \lfloor\sqrt{n}\rfloor$ and $m := \lfloor \sqrt{n}\rfloor$.
\vspace{-1.5ex}
\item ProxSARAH-A-v3: $\gamma_m := 0.99$ and mini-batch size $\hat{b} := \lfloor n^{\frac{1}{3}}\rfloor$ and $m := \lfloor n^{\frac{1}{3}}\rfloor$.
\vspace{-1.5ex}
\end{itemize}
Here, $C$ is given in Subsection~\ref{subsubsec:mini_batch_step_size}.
We also implement $4$ other algorithms:
\begin{itemize}
\vspace{-1.5ex}
\item ProxSVRG: The proximal SVRG algorithm in \citep{reddi2016proximal} for single sample with theoretical step-size $\eta = \frac{1}{3nL}$, and for the mini-batch case with $\hat{b} := \lfloor n^{2/3}\rfloor$, the epoch length $m := \lfloor n^{1/3}\rfloor$, and the step-size $\eta := \frac{1}{3L}$.
\vspace{-1.5ex}
\item ProxSpiderBoost: The proximal SpiderBoost method in \citep{wang2018spiderboost} with $\hat{b} := \lfloor\sqrt{n}\rfloor$, $m := \lfloor\sqrt{n}\rfloor$, and step-size $\eta := \frac{1}{2L}$.
\vspace{-1.5ex}
\item ProxSGD: Proximal Stochastic Gradient Descent scheme \citep{ghadimi2013stochastic} with step-size  $\eta_t \!:=\! \frac{\eta_0}{1 + \tilde{\eta}\lfloor t/n\rfloor}$, where $\eta_0 \! > \! 0$ and $\tilde{\eta}  \!\geq\! 0$ will be given in each example.
\vspace{-1.5ex}
\item ProxGD: Standard Proximal Gradient Descent algorithm with step-size $\eta := \frac{1}{L}$.
\vspace{-1.5ex}
\end{itemize}
All the algorithms are implemented in Python running on a single node of a Linux server (called Longleaf) with configuration: 3.40GHz Intel processors, 30M cache, and 256GB RAM.
For the last example, we implement these algorithms in \href{https://www.tensorflow.org}{\textbf{TensorFlow} (https://www.tensorflow.org)} running on a GPU system.
Our code is available online at
\begin{center}
\vspace{-0.5ex}
{\color{blue}\href{https://github.com/unc-optimization/StochasticProximalMethods}{https://github.com/unc-optimization/StochasticProximalMethods}.}
\vspace{-0.5ex}
\end{center}
To be fair for comparison, we compute the norm of gradient mapping $\Vert G_{\eta}(w^{(s)}_t)\Vert$ for visualization at the same value $\eta := 0.5$ in all methods.
\nhanp{We run the first and second examples for $20$ and $30$ epochs, respectively whereas we increase it up to $150$ and $300$ epochs in the last example.}
Several datasets used in this paper are from \citep{CC01a}, which are available online at \href{https://www.csie.ntu.edu.tw/~cjlin/libsvm/}{https://www.csie.ntu.edu.tw/{$\sim$}cjlin/libsvm/}.
Two other well-known datasets are \texttt{mnist} and \texttt{mnist\_fashion} (\href{http://yann.lecun.com/exdb/mnist/}{http://yann.lecun.com/exdb/mnist/}).

\beforesubsec
\subsection{Nonnegative principal component analysis}\label{subsec:nn_pca_exam}
\aftersubsec
We reconsider the problem of non-negative principal component analysis (NN-PCA) studied in \citep{reddi2016proximal}. 
More precisely, for a given set of samples $\set{z_i}_{i=1}^n$ in $\R^d$,  we solve the following  constrained nonconvex problem:
\begin{equation}\label{eq:nn_pca}
f^{\star} := \min_{w\in\R^d}\Big\{ f(w) := -\frac{1}{2n}\sum_{i=1}^nw^{\top}(z_iz_i^{\top})w \mid \norms{w} \leq 1, ~w \geq 0 \Big\}.
\end{equation}
By defining $f_i(w) := -\frac{1}{2}w^{\top}(z_iz_i^{\top})w$ for $i=1,\cdots, n$, and $\psi(w) := \delta_{\Xc}(w)$, the indicator of $\Xc := \set{w\in\R^d \mid \norms{w} \leq 1, w \geq 0}$, we can formulate \eqref{eq:nn_pca} into \eqref{eq:finite_sum}.
Moreover, since $z_i$ is normalized, the Lipschitz constant of $\nabla{f}_i$ is $L = 1$ for $i=1,\cdots, n$.

\beforepara
\paragraph{\textbf{Small and medium datasets:}}
We test all the algorithms on three different well-known datasets:  \texttt{mnist} ($n=60000$, $d = 784$), \texttt{rcv1-binary} ($n=20242$, $d=47236$), and \texttt{real-sim} ($n=72309$, $d=20958$). 
In ProxSGD, we set $\eta_0 := 0.1$ and $\tilde{\eta} := 1.0$ that allow us  to obtain good performance.

We first verify our theory by running $5$ algorithms with single sample (i.e. $\hat{b} = 1$).
The relative objective residuals and the absolute norm of gradient mappings of these algorithms after $20$ epochs are plotted in Figure~\ref{fig:nn_pca_tes1}.

\begin{figure}[ht!]
\begin{center}
\includegraphics[width = 1\textwidth]{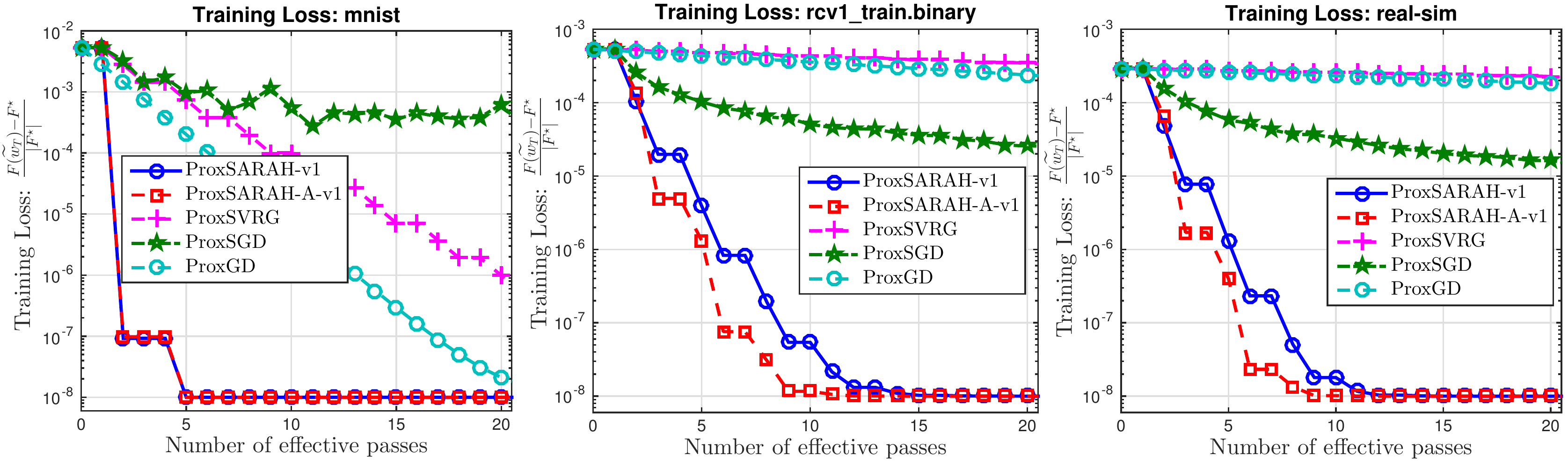} 
\includegraphics[width = 1\textwidth]{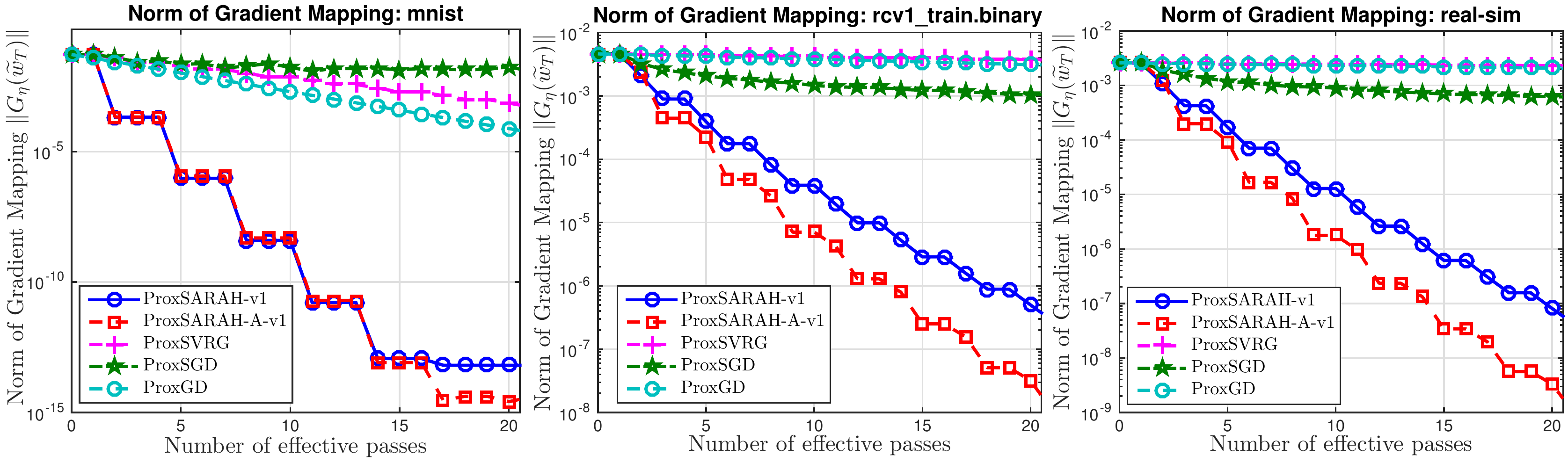}
\vspace{-4ex}
\caption{The objective value residuals and gradient mapping norms of \eqref{eq:nn_pca} on three datasets: \texttt{mnist}, \texttt{rcv1-binary}, and \texttt{real-sim}.}\label{fig:nn_pca_tes1}
\end{center}
\vspace{-3ex}
\end{figure}

Figure~\ref{fig:nn_pca_tes1} shows that both \nhanp{ProxSARAH-v1} and its adaptive variant work really well and dominate all other methods.
ProxSARAH-A-v1 is still better than \nhanp{ProxSARAH-v1}. 
ProxSVRG is slow since its theoretical step-size $\frac{1}{3nL}$ is too small.

Now, we consider the mini-batch case.
In this test, we run all the mini-batch variants of the methods described above. The relative objective residuals and the norms of gradient mapping are plotted in Figure~\ref{fig:nn_pca_tes2}.

\begin{figure}[!hpt]
\vspace{-0ex}
\begin{center}
\includegraphics[width = 1\textwidth]{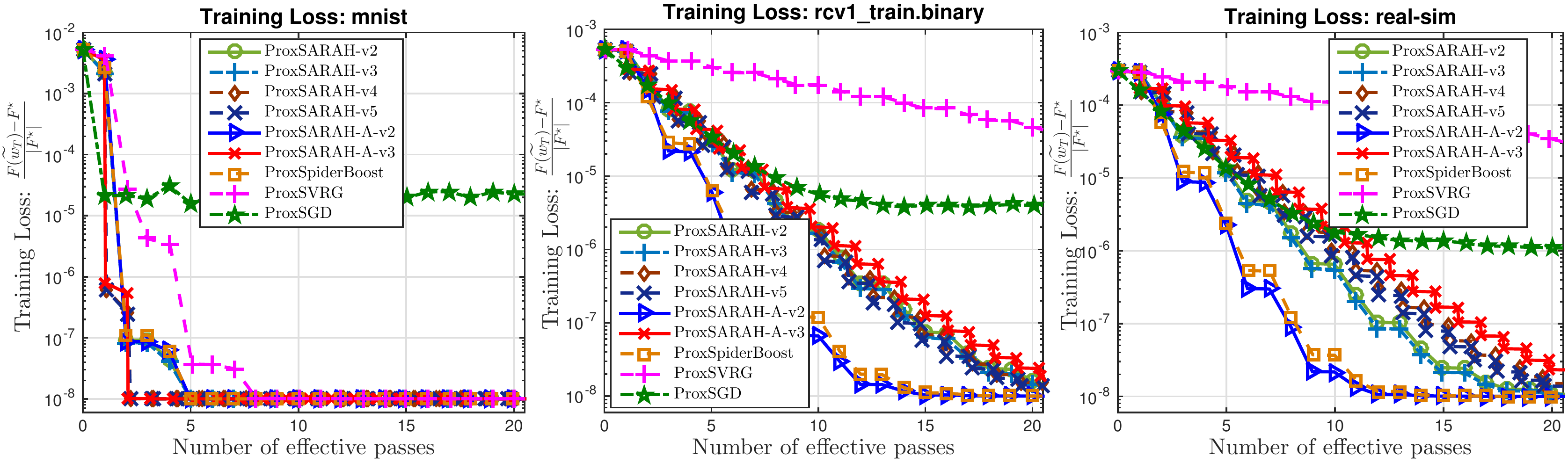}  
\includegraphics[width = 1\textwidth]{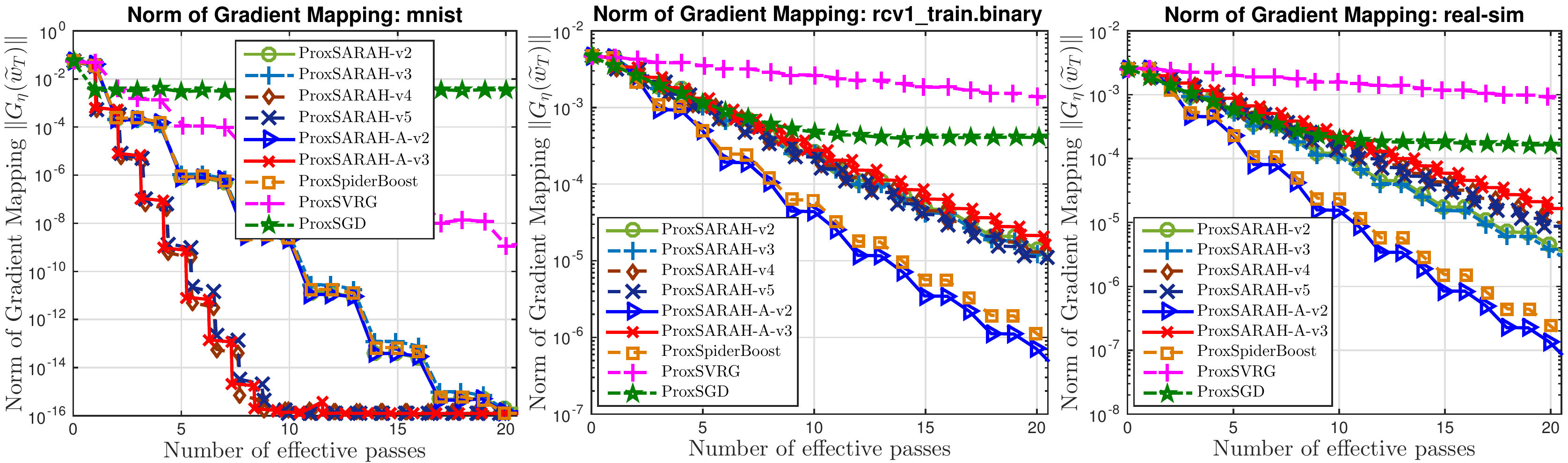}
\vspace{-3ex}
\caption{The relative objective residuals and the norms of gradient mappings of $9$ algorithms for solving \eqref{eq:nn_pca} on three datasets: \texttt{mnist}, \texttt{rcv1-binary}, and \texttt{real-sim}.}\label{fig:nn_pca_tes2}
\end{center}
\vspace{-3ex}
\end{figure}

From Figure~\ref{fig:nn_pca_tes2}, we observe that ProxSpiderBoost works well since it has a large step-size $\eta = \frac{1}{2L}$, and it is comparable with ProxSARAH-A-v2.
Other ProxSARAH variants also work well, and their performance depends on datasets.
Although ProxSVRG takes $\eta = \frac{1}{3L}$, its choice of batch size and epoch length also affects the performance resulting in a slower convergence.
ProxSGD works well but then its relative objective residual is saturated around $10^{-5}$ accuracy.
However, its gradient mapping norms do not significantly decrease as in ProxSARAH variants or ProxSpiderBoost.
Note that ProxSARAH variants with large step-size $\gamma$ (e.g., $\gamma = 0.99$) are very similar to ProxSpiderBoost which results in resemblance in their performance. 

\beforepara
\paragraph{\textbf{Large datasets:}}
Now, we test these algorithms on larger datasets: \texttt{url\_combined} ($n = 2,396,130; d = 3,231,961$), \texttt{news20.binary} ($n=19,996; d=1,355,191$), and \texttt{avazu-app} ($n = 14,596,137; d = 999,990$). 
The relative objective residuals and the absolute norms of gradient mapping of this \nhanp{experiment} are \nhanp{depicted} in Figure~\ref{fig:nn_pca_tes3}.

\begin{figure}[hpt!]
\vspace{-1ex}
\begin{center}
\includegraphics[width = 1\textwidth]{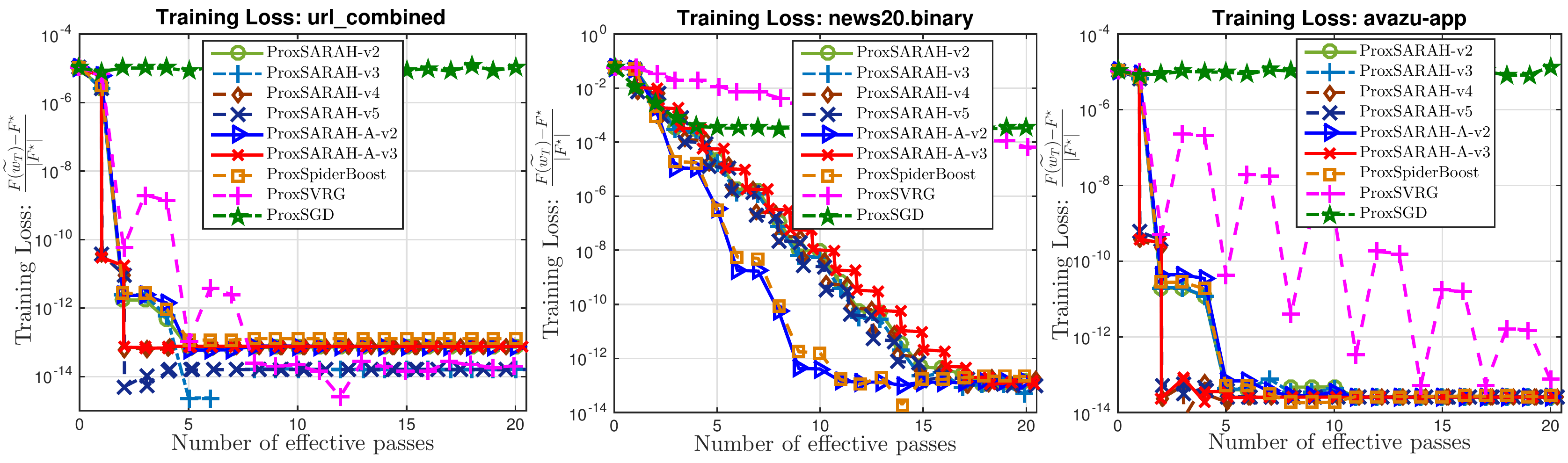}  
\includegraphics[width = 1\textwidth]{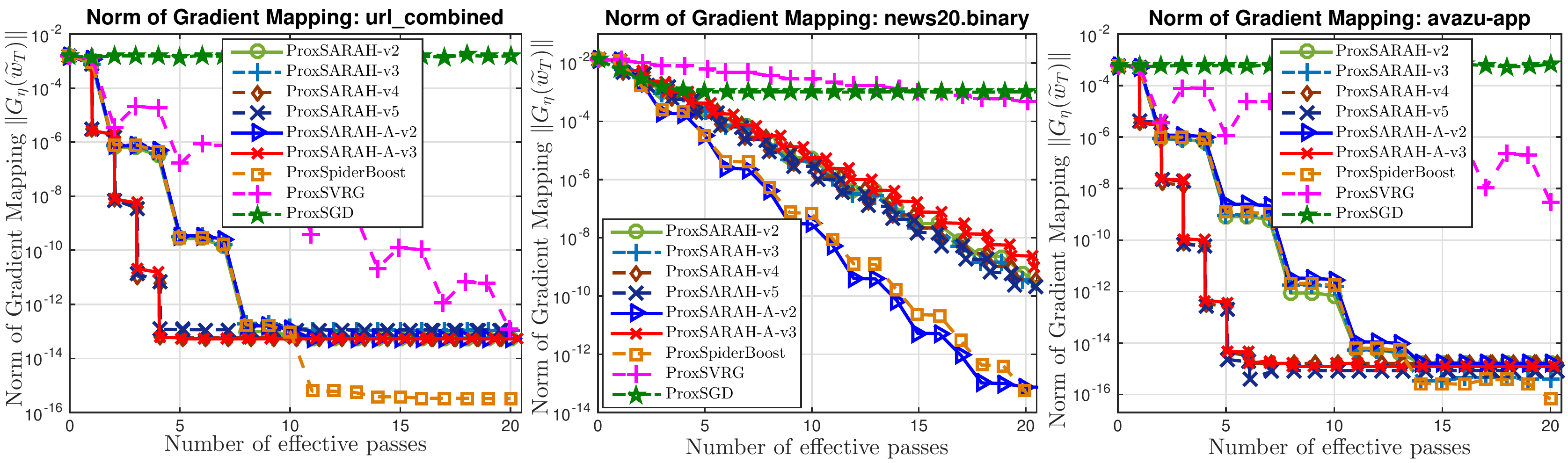}
\vspace{-3ex}
\caption{The relative objective residuals and the gradient mapping norms of $9$ algorithms for solving \eqref{eq:nn_pca} on three datasets: \texttt{url\_combined}, \texttt{news20.binary}, and \texttt{avazu-app}.}\label{fig:nn_pca_tes3}
\end{center}
\vspace{-3ex}
\end{figure}

\nhanp{Figure~\ref{fig:nn_pca_tes3} shows that ProxSARAH variants still work well and depend on the dataset in which ProxSARAH-A-v2 or the variants with $\hat{b} = \mathcal{O}(n^{\frac{1}{3}})$ dominates other algorithms.}
In this experiment, ProxSpiderBoost gives smaller gradient mapping norms for \texttt{url\_combined} and \texttt{avazu-app} in the last epochs than the others. 
However, these algorithms have achieved up to $10^{-13}$ accuracy in absolute values, the improvement of ProxSpiderBoost may not be necessary.
With the same step-size as in the previous test, ProxSGD performs quite poorly in these three datasets.
ProxSVRG does not work well on the \texttt{news20.binary} dataset, but becomes comparable with other methods on \texttt{url\_combined} and \texttt{avazu-app}.

\beforesubsec 
\subsection{Sparse binary classification with nonconvex losses}
\aftersubsec
We consider the following sparse binary classification involving nonconvex loss function:
\vspace{-0.5ex}
\begin{equation}\label{eq:sparse_BinClass_ncvx}
\min_{w\in\R^d}\set{ F(w) := \frac{1}{n}\sum_{i=1}^n\ell(a_i^{\top}w, b_i) + \lambda\norms{w}_1},
\vspace{-0.5ex}
\end{equation}
where $\set{(a_i, b_i)}_{i=1}^n \subset\R^d\times \set{-1,1}^n$ is a given training dataset, $\lambda > 0$ is a regularization parameter, and $\ell(\cdot,\cdot)$ is a given smooth and nonconvex loss function as studied in \citep{zhao2010convex}.
By setting $f_i(w) := \ell(a_i^{\top}w, b_i)$ and $\psi(w) := \lambda\norms{w}_1$ for $i=1,\cdots, n$, we obtain the form \eqref{eq:finite_sum}.

The loss function $\ell$ is chosen from one of the following three cases \citep{zhao2010convex}:
\begin{enumerate}
\vspace{-1ex}
\item \textbf{Normalized sigmoid loss:}
$\ell_1(s, \tau) := 1 - \tanh(\omega \tau s)$ for a given $\omega > 0$.
Since $\left\vert \frac{d^2\ell_1(s,\tau)}{ds^2}\right\vert \leq \frac{8(2+\sqrt{3})(1+\sqrt{3})\omega^2\tau^2}{(3+\sqrt{3})^2}$ and $\vert\tau\vert = 1$, we can show that $\ell_1(\cdot,\tau)$ is $L$-smooth with respect to $s$, where $L :=  \frac{8(2+\sqrt{3})(1+\sqrt{3})\omega^2}{(3+\sqrt{3})^2} \approx 0.7698\omega^2$.

\vspace{-1.5ex}
\item \textbf{Nonconvex loss in 2-layer neural networks:}
$\ell_2(s, \tau) := \left(1 - \frac{1}{1 + \exp(-\tau s)}\right)^2$.
For this function, we have $\left\vert \frac{d^2\ell_2(s,\tau)}{ds^2} \right\vert \leq 0.15405\tau^2$.
If $\vert \tau \vert = 1$, then this function is also $L$-smooth with $L = 0.15405$.

\vspace{-1.5ex}
\item \textbf{Logistic difference loss:} $\ell_3(s,\tau) := \ln(1 + \exp(-\tau s)) - \ln(1 + \exp(-\tau s - \omega))$ for some $\omega > 0$.
With $\omega = 1$, we have  $\vert \frac{d^2\ell_3(s,\tau)}{ds^2}\vert \leq 0.092372\tau^2$.
Therefore, if $\vert\tau\vert = 1$, then this function is also $L$-smooth with $L = 0.092372$.
\vspace{-1ex}
\end{enumerate}
We set the regularization parameter \nhanp{$\lambda := \frac{1}{n}$} in all the tests, which gives us relatively sparse solutions.
We test the above algorithms on different scenarios ranging from small to large datasets.

\beforepara
\paragraph{\textbf{Small and medium  datasets:}}
We consider three small to medium datasets: \texttt{rcv1.binary} ($n=20,242$, $d=47,236$), \texttt{real-sim} $(n = 72,309$, $d = 20,958$), and \texttt{epsilon} ($n = 400,000$, $d =2,000$).

\begin{figure}[htp!]
\begin{center}
\includegraphics[width = 1\textwidth]{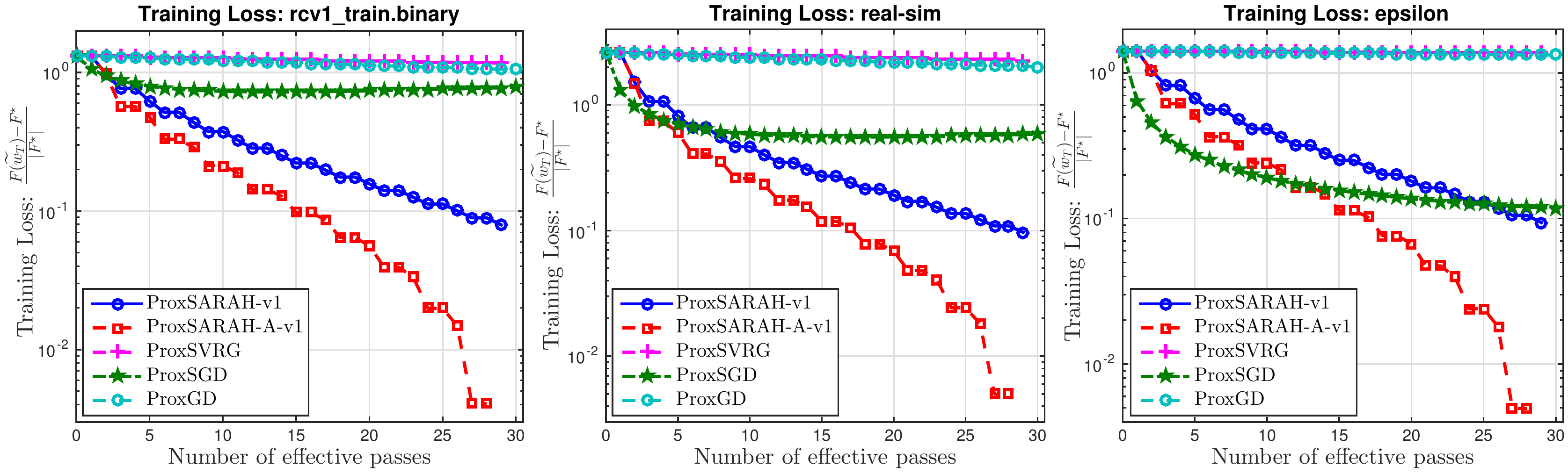}\\
\includegraphics[width = 1\textwidth]{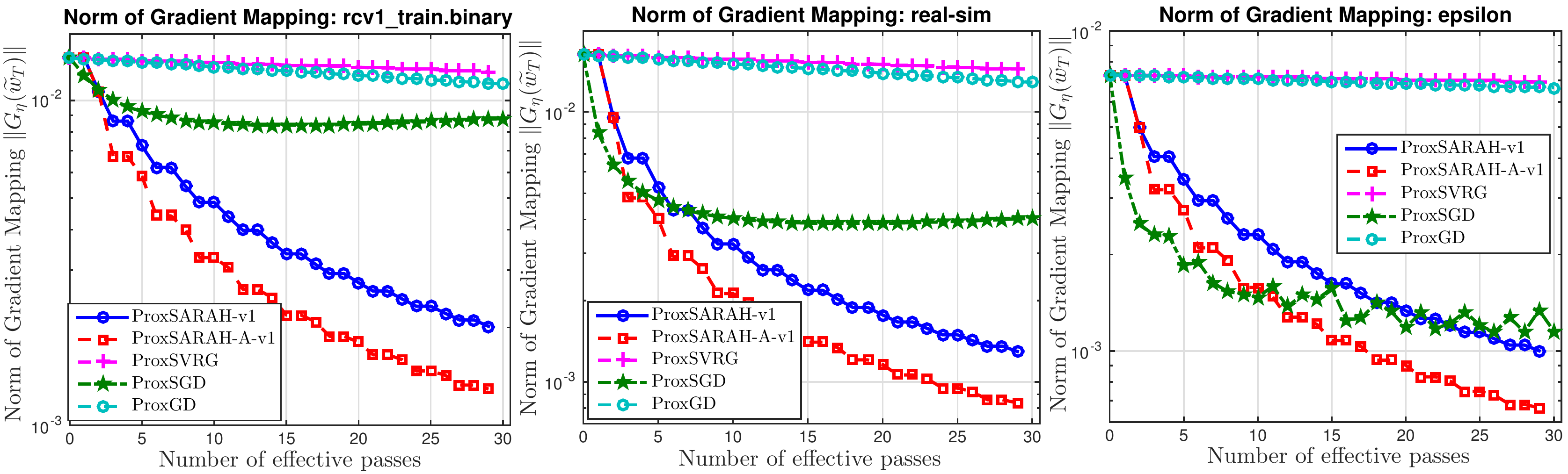}
\vspace{-3ex}
\caption{The relative objective residuals and gradient mapping norms  of \eqref{eq:sparse_BinClass_ncvx} on three datasets using the loss $\ell_2(s, \tau)$ - The single sample case.}\label{fig:bin_class_test1}
\end{center}
\vspace{-3ex}
\end{figure}

Figure \ref{fig:bin_class_test1} shows the relative objective residuals and the gradient mapping norms on these three datasets for the loss function $\ell_2(\cdot)$ in the single sample case.
Similar to the first example, ProxSARAH-v1 and its adaptive variant work well, whereas ProxSARAH-A-v1 is better.
ProxSVRG is still slow due to small step-size.
ProxSGD appears to be better than ProxSVRG and ProxGD within $30$ epochs.

Now, we test the loss function $\ell_2(\cdot)$ with the mini-batch variants using the same three datasets.
Figure~\ref{fig:bin_class_test1b} shows the results of $9$ algorithms on these datasets.
\begin{figure}[htp!]
\begin{center}
\includegraphics[width = 1\textwidth]{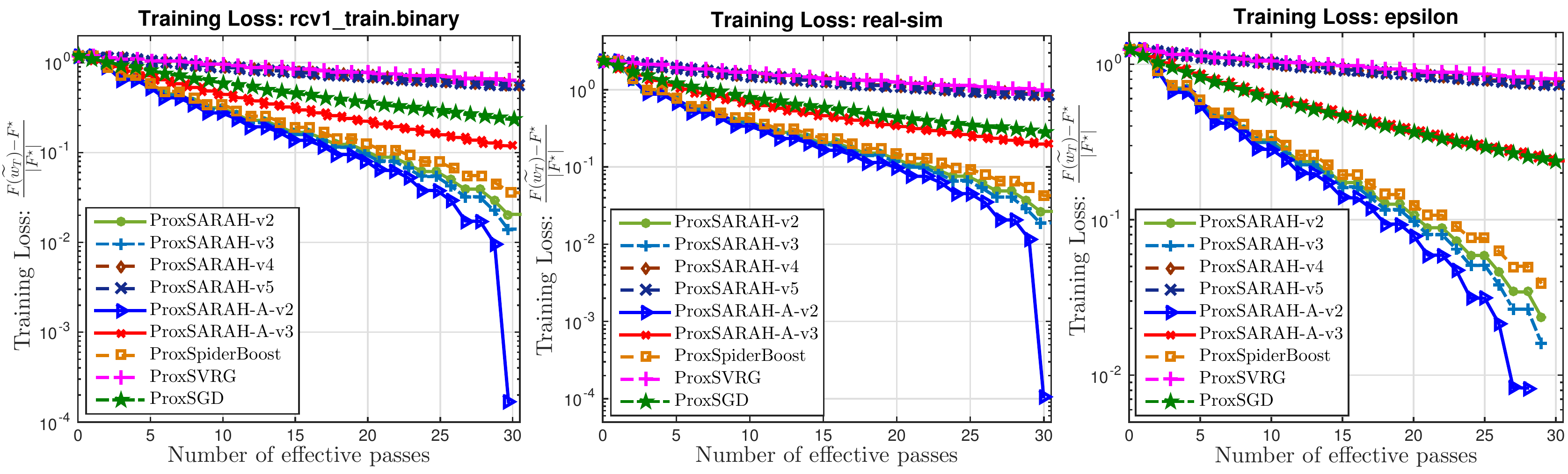}\\
\includegraphics[width = 1\textwidth]{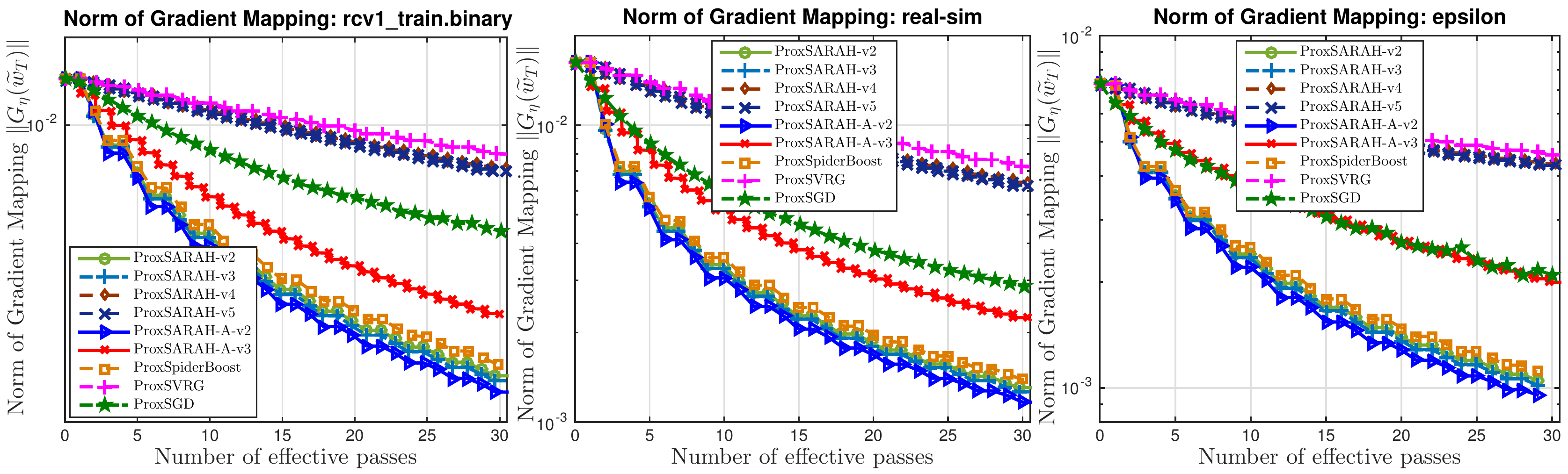}
\vspace{-3ex}
\caption{The relative objective residuals and gradient mapping norms  of \eqref{eq:sparse_BinClass_ncvx} on three datasets using the loss $\ell_2(s, \tau)$ - The mini-batch case.}\label{fig:bin_class_test1b}
\end{center}
\vspace{-3ex}
\end{figure}

We can see that ProxSARAH-A-v2 is the most effective algorithm whereas ProxSpiderBoost also performs well due to large step-size as discussed.
ProxSVRG remains slow in this test, and has similar performance as  ProxSARAH-v4 and -v5 since they all use the same epoch length. \nhanp{Notice that ProxSARAH adaptive variants normally work better than their corresponding fixed step-size variants in this experiment. Additionally,} ProxSARAH-A-v2 still preserves the best-known complexity $\BigO{n + n^{1/2}\varepsilon^{-2}}$.

\beforepara
\paragraph{\textbf{Large datasets:}}
Next, we test  these algorithms on three large datasets: \texttt{url\_combined} ($n = 2,396,130$, $d= 3,231,961$), \texttt{avazu-app} ($n = 14,596,137$, $d = 999,990$), and \texttt{kddb-raw} ($n = 19,264,097$, $d = 3,231,961$).
Figure~\ref{fig:bin_class_test1c} \nhanp{presents} the results of different algorithms on \nhanp{these} datasets.

\begin{figure}[htp!]
\begin{center}
\includegraphics[width = 1\textwidth]{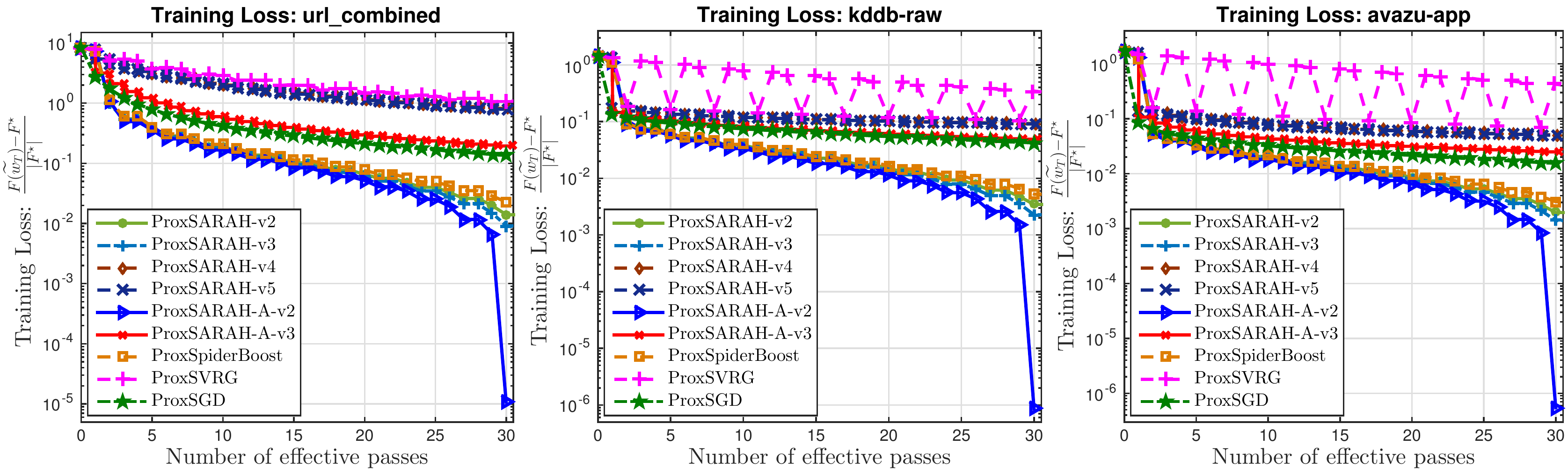}
\includegraphics[width = 1\textwidth]{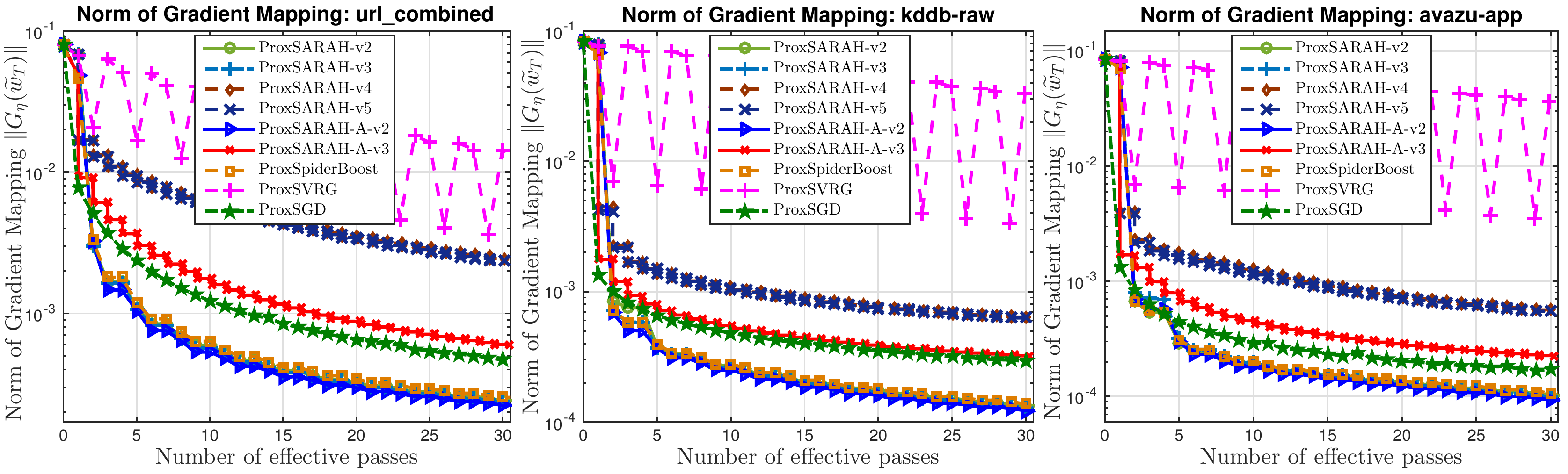}
\vspace{-1ex}
\caption{The relative objective residuals and gradient mapping norms  of \eqref{eq:sparse_BinClass_ncvx} on three large datasets using the loss $\ell_2(s, \tau)$ - The mini-batch case.}\label{fig:bin_class_test1c}
\end{center}
\vspace{-4ex}
\end{figure}

Again, we can observe from Figure~\ref{fig:bin_class_test1c} that, ProxSARAH-A-v2 achieves the best performance.
ProxSpiderBoost also works well in this experiment while ProxSVRG are comparable with ProxSARAH-v1 and ProxSARAH-v2.
ProxSGD also has similar performance as in ProxSARAH-A-v3.

The complete results of $9$ algorithms on these three datasets with three loss functions are presented in Table~\ref{tbl:binary_classification_test}.
Apart from the relative objective residuals and gradient mapping norms, it consists of both training and test accuracies where we use $10\%$ of the dataset to evaluate the test accuracy.

\begin{table}[!t] 
\newcommand{\cellb}[1]{{}{{\bf\color{blue}#1}}{}}
\newcommand{\cellr}[1]{{}{\color{red}#1}{}}
\newcommand{\cell}[1]{{}#1{}}
\begin{center}
\caption{The results of $9$ algorithms on three datasets: \textrm{url\_combined}, \textrm{avazu-app}, and \textrm{kddb-raw}.}\label{tbl:binary_classification_test}
\resizebox{\textwidth}{!}{	
\begin{tabular}{| l | r r r | r r r | r r r | r r r |} \toprule
\multirow{2}{*}{\text{Algorithms}} & \multicolumn{3}{c|}{$\Vert G_{\eta}(\widetilde{w}_T)\Vert^2$ } & \multicolumn{3}{c|}{$(F(w_T) - F^{\star})/\vert F^{\star}\vert$} & \multicolumn{3}{c|}{Training Accuracy} & \multicolumn{3}{c|}{Test Accuracy} \\ 
\cmidrule{2-13}
		& \cell{$\ell_1$-Loss} & \cell{$\ell_2$-Loss} & \cell{$\ell_3$-Loss} & \cell{$\ell_1$-Loss} & \cell{$\ell_2$-Loss} & \cell{$\ell_3$-Loss} &  \cell{$\ell_1$-Loss} & \cell{$\ell_2$-Loss} & \cell{$\ell_3$-Loss} &  \cell{$\ell_1$-Loss} & \cell{$\ell_2$-Loss} & \cell{$\ell_3$-Loss}  \\ 
\midrule
 \multicolumn{13}{|c|}{\texttt{url\_combined} ($n = 2,396,130$, $d= 3,231,961$)}\\ 
\midrule
\cell{ProxSARAH-v2} & \cell{2.534e-06} & \cell{5.827e-08} & \cell{1.181e-07}  & \cell{1.941e-01} & \cell{1.397e-02} & \cell{8.092e-02} & \cell{0.965} & \cell{0.9684} & \cell{0.9657} & \cell{0.9636} & \cell{0.9672} & \cell{0.9646} \\ 
\cell{ProxSARAH-v3} & \cell{2.772e-06} & \cell{5.515e-08} & \cell{1.110e-07}  & \cell{2.065e-01} & \cell{9.149e-03} & \cell{7.399e-02} & \cell{0.965} & \cell{0.9685} & \cell{0.9658} & \cell{0.9635} & \cell{0.9673} & \cell{0.9647} \\ 
\cell{ProxSARAH-v4} & \cell{1.252e-05} & \cell{6.003e-06} & \cell{1.433e-05}  & \cell{4.749e-01} & \cell{8.210e-01} & \cell{1.597e+00} & \cell{0.962} & \cell{0.9617} & \cell{0.9558} & \cell{0.9614} & \cell{0.9607} & \cell{0.9528} \\ 
\cell{ProxSARAH-v5} & \cell{1.182e-05} & \cell{5.595e-06} & \cell{1.346e-05}  & \cell{4.617e-01} & \cell{7.931e-01} & \cell{1.546e+00} & \cell{0.962} & \cell{0.9617} & \cell{0.9568} & \cell{0.9615} & \cell{0.9609} & \cell{0.9537} \\ 
\cell{ProxSARAH-A-v2} & \cell{1.115e-06} & \cellb{4.969e-08} & \cellb{5.215e-08}  & \cell{9.225e-02} & \cellb{1.076e-05} & \cellb{1.268e-05} & \cellb{0.966} & \cellb{0.9687} & \cellb{0.9672} & \cell{0.9645} & \cellb{0.9676} & \cellb{0.9662} \\ 
\cell{ProxSARAH-A-v3} & \cell{1.034e-05} & \cell{3.639e-07} & \cell{4.555e-07}  & \cell{4.325e-01} & \cell{1.946e-01} & \cell{2.619e-01} & \cell{0.962} & \cell{0.9644} & \cell{0.9634} & \cell{0.9616} & \cell{0.9631} & \cell{0.9625} \\ 
\cell{ProxSpiderBoost} & \cell{1.375e-06} & \cell{6.454e-08} & \cell{7.158e-08}  & \cell{1.178e-01} & \cell{2.274e-02} & \cell{2.947e-02} & \cell{0.965} & \cell{0.9681} & \cell{0.9664} & \cell{0.9641} & \cell{0.9669} & \cell{0.9653} \\ 
\cell{ProxSVRG} & \cell{7.391e-03} & \cell{2.043e-04} & \cell{2.697e-04}  & \cell{2.196e+00} & \cell{1.091e+00} & \cell{1.490e+00} & \cell{0.958} & \cell{0.9601} & \cell{0.9595} & \cell{0.9570} & \cell{0.9585} & \cell{0.9579} \\ 
\cell{ProxSGD} & \cellb{5.005e-07} & \cell{2.340e-07} & \cell{5.963e-07}  & \cellb{4.446e-03} & \cell{1.406e-01} & \cell{3.062e-01} & \cell{0.968} & \cell{0.9651} & \cell{0.9633} & \cellb{0.9667} & \cell{0.9637} & \cell{0.9624} \\ 
\midrule
\multicolumn{13}{|c|}{\texttt{avazu-app} ($n = 14,596,137$, $d = 999,990$)}\\ 
\midrule
\cell{ProxSARAH-v2} & \cell{8.647e-09} & \cell{1.053e-08} & \cell{5.074e-10}  & \cell{4.354e-04} & \cell{1.958e-03} & \cell{1.687e-04} & \cell{0.883} & \cell{0.8843} & \cell{0.8834} & \cell{0.8615} & \cell{0.8617} & \cell{0.8615} \\ 
\cell{ProxSARAH-v3} & \cell{9.757e-09} & \cell{9.792e-09} & \cell{4.776e-10}  & \cell{4.615e-04} & \cell{1.397e-03} & \cell{1.554e-04} & \cell{0.883} & \cellb{0.8844} & \cell{0.8834} & \cell{0.8615} & \cell{0.8617} & \cell{0.8615} \\ 
\cell{ProxSARAH-v4} & \cell{9.087e-08} & \cell{3.179e-07} & \cell{1.841e-07}  & \cell{1.738e-03} & \cell{5.102e-02} & \cell{9.816e-03} & \cell{0.883} & \cell{0.8834} & \cell{0.8834} & \cell{0.8615} & \cell{0.8615} & \cell{0.8615} \\ 
\cell{ProxSARAH-v5} & \cell{8.568e-08} & \cell{3.029e-07} & \cell{1.702e-07}  & \cell{1.675e-03} & \cell{5.036e-02} & \cell{9.433e-03} & \cell{0.883} & \cell{0.8834} & \cell{0.8834} & \cell{0.8615} & \cell{0.8615} & \cell{0.8615} \\ 
\cell{ProxSARAH-A-v2} & \cell{3.062e-09} & \cellb{8.724e-09} & \cellb{1.814e-10}  & \cell{2.046e-04} & \cellb{5.467e-07} & \cellb{1.388e-08} & \cell{0.883} & \cellb{0.8844} & \cell{0.8834} & \cell{0.8615} & \cell{0.8617} & \cell{0.8615} \\ 
\cell{ProxSARAH-A-v3} & \cell{7.784e-08} & \cell{5.124e-08} & \cell{4.405e-09}  & \cell{1.604e-03} & \cell{2.499e-02} & \cell{1.223e-03} & \cell{0.883} & \cell{0.8834} & \cell{0.8834} & \cell{0.8615} & \cell{0.8615} & \cell{0.8615} \\ 
\cell{ProxSpiderBoost} & \cell{4.050e-09} & \cell{1.152e-08} & \cell{2.579e-10}  & \cell{2.626e-04} & \cell{3.090e-03} & \cell{5.073e-05} & \cell{0.883} & \cell{0.8842} & \cell{0.8834} & \cell{0.8615} & \cell{0.8617} & \cell{0.8615} \\ 
\cell{ProxSVRG} & \cell{4.218e-03} & \cell{1.309e-03} & \cell{1.202e-04}  & \cell{3.137e-01} & \cell{4.287e-01} & \cell{2.031e-01} & \cell{0.883} & \cell{0.8648} & \cell{0.8834} & \cell{0.8615} & \cell{0.8146} & \cell{0.8615} \\ 
\cell{ProxSGD} & \cellb{9.063e-10} & \cell{2.839e-08} & \cell{3.150e-09}  & \cellb{6.449e-06} & \cell{1.595e-02} & \cell{9.536e-04} & \cell{0.883} & \cell{0.8835} & \cell{0.8834} & \cell{0.8615} & \cell{0.8616} & \cell{0.8615} \\ 
\midrule
\multicolumn{13}{|c|}{\texttt{kddb-raw} ($n = 19,264,097$, $d = 3,231,961$)}\\ 
\midrule
\cell{ProxSARAH-v2} & \cell{2.013e-08} & \cell{1.770e-08} & \cell{5.688e-09}  & \cell{7.235e-04} & \cell{3.455e-03} & \cell{4.295e-03} & \cell{0.862} & \cell{0.8654} & \cell{0.8619} & \cell{0.8531} & \cell{0.8560} & \cell{0.8534} \\ 
\cell{ProxSARAH-v3} & \cell{2.168e-08} & \cell{1.669e-08} & \cell{6.105e-09}  & \cell{7.903e-04} & \cell{2.275e-03} & \cell{3.741e-03} & \cell{0.862} & \cell{0.8655} & \cell{0.8619} & \cell{0.8530} & \cell{0.8561} & \cell{0.8534} \\ 
\cell{ProxSARAH-v4} & \cell{2.265e-07} & \cell{4.066e-07} & \cell{2.796e-07}  & \cell{3.862e-03} & \cell{9.196e-02} & \cell{2.203e-02} & \cell{0.862} & \cell{0.8617} & \cell{0.8615} & \cell{0.8530} & \cell{0.8533} & \cell{0.8531} \\ 
\cell{ProxSARAH-v5} & \cell{2.127e-07} & \cell{3.943e-07} & \cell{2.600e-07}  & \cell{3.725e-03} & \cell{9.098e-02} & \cell{2.152e-02} & \cell{0.862} & \cell{0.8617} & \cell{0.8615} & \cell{0.8530} & \cell{0.8533} & \cell{0.8531} \\ 
\cell{ProxSARAH-A-v2} & \cell{7.955e-09} & \cellb{1.490e-08} & \cellb{2.830e-09}  & \cell{2.106e-04} & \cellb{8.502e-07} & \cell{2.829e-03} & \cell{0.862} & \cellb{0.8656} & \cellb{0.8621} & \cell{0.8531} & \cellb{0.8562} & \cellb{0.8536} \\ 
\cell{ProxSARAH-A-v3} & \cell{1.951e-07} & \cell{1.036e-07} & \cell{9.293e-09}  & \cell{3.539e-03} & \cell{4.887e-02} & \cell{9.223e-03} & \cell{0.862} & \cell{0.8627} & \cell{0.8616} & \cell{0.8530} & \cell{0.8544} & \cell{0.8531} \\ 
\cell{ProxSpiderBoost} & \cell{9.867e-09} & \cell{1.906e-08} & \cell{6.889e-09}  & \cell{3.082e-04} & \cell{5.249e-03} & \cellb{5.026e-07} & \cell{0.862} & \cell{0.8652} & \cell{0.8619} & \cell{0.8531} & \cell{0.8559} & \cell{0.8534} \\ 
\cell{ProxSVRG} & \cell{1.225e-02} & \cell{1.105e-03} & \cell{5.040e-04}  & \cell{3.541e-01} & \cell{3.471e-01} & \cell{2.780e-01} & \cell{0.860} & \cell{0.8611} & \cell{0.8599} & \cell{0.8518} & \cell{0.8529} & \cell{0.8519} \\ 
\cell{ProxSGD} & \cellb{6.027e-09} & \cell{8.899e-08} & \cell{1.331e-08}  & \cellb{2.593e-05} & \cell{4.320e-02} & \cell{9.937e-03} & \cell{0.862} & \cell{0.8629} & \cell{0.8616} & \cell{0.8530} & \cell{0.8546} & \cell{0.8531} \\ 
\bottomrule
\end{tabular}}
\end{center}
\vspace{-4ex}
\end{table}

Among three loss functions, the loss $\ell_2$ gives the best training and testing accuracy.
The accuracy is consistent with the result reported in \cite{zhao2010convex}.
ProxSGD seems to give a good results on the $\ell_1$-loss, but ProxSARAH-A-v2 is the best for the $\ell_2$ and \nhanp{$\ell_3$}-losses in the majority of the test.

\beforesubsec
\subsection{Feedforward Neural Network Training problem}
\aftersubsec
We consider the following composite nonconvex optimization model arising from a feedforward neural network configuration:
\begin{equation}\label{eq:fwnn_exam}
\min_{w\in\R^d}\set{ F(w) := \frac{1}{n}\sum_{i=1}^n\ell\big( h(w, a_i), b_i\big) +  \psi(w)},
\end{equation}
where we concatenate all the weight matrices and bias vectors of the neural network in one vector of variable $w$, $\set{(a_i, b_i)}_{i=1}^n$ is a training dataset, $h(\cdot)$ is a composition between all linear transforms and activation functions as $h(w, a) := \bsigma_l(W_l\bsigma_{l-1}(W_{l-1}\bsigma_{l-2}(\cdots \bsigma_0(W_0a + \mu_0) \cdots ) + \mu_{l-1}) + \mu_l)$, where $W_i$ is a weight matrix, $\mu_i$ is a bias vector, $\bsigma_i$ is an activation function, $l$ is the number of layers,  $\ell(\cdot)$ is the soft-max cross-entropy loss, and $\psi$ is a convex regularizer (e.g., $\psi(w) := \lambda\norms{w}_1$ for some $\lambda > 0$ to obtain sparse weights).
Again, by defining $f_i(w) := \ell(h(w, a_i), b_i)$ for $i=1,\cdots, n$, we can bring \eqref{eq:fwnn_exam} into the same composite finite-sum setting \eqref{eq:finite_sum}.

We implement our algorithms and other methods in TensorFlow and use two datasets \texttt{mnist} and \texttt{fashion\_mnist} to evaluate their performance.
In the first experiment, we use a one-hidden-layer fully connected neural network: $784 \times 100 \times 10$ for both \texttt{mnist} and \texttt{fashion\_mnist}.
The activation function $\bsigma_i$ of the hidden layer is ReLU and the loss function is soft-max cross-entropy.
To estimate the Lipschitz constant $L$, we normalize the input data. 
The regularization parameter $\lambda$ is set at \nhanp{$\lambda := \frac{1}{n}$} and $\psi(\cdot) := \lambda\norm{\cdot}_1$.

We first test ProxSARAH, ProxSVRG, ProxSpiderBoost, and ProxSGD using mini-batch.
For ProxSGD, we use the mini-batch $\hat{b} = 245$, $\eta_0 = 0.1$, and $\tilde{\eta} = 0.5$ for both datasets.
For the  \texttt{mnist} dataset, we tune $L = 1$ then follow the configuration in Subsection~\ref{subsubsec:mini_batch_step_size} to  choose $\eta$, $\gamma$, $m$, and $\hat{b}$ for ProxSARAH variants. 
We also tune the learning rate for ProxSVRG at $\eta = 0.2$, and for ProxSpiderBoost at $\eta = 0.12$.
However, for the \texttt{fashion\_mnist} dataset, it requires a smaller learning rate.
Therefore, we choose \nhanp{$L = 4$} for ProxSARAH and follow the theory in Subsection~\ref{subsubsec:mini_batch_step_size} to set $\eta$, $\gamma$, $m$, and $\hat{b}$.
We also tune the learning rate for ProxSVRG and ProxSpiderBoost until they are stabilized to obtain the best possible step-size in this example as \nhanp{$\eta_{\mathrm{ProxSVRG}} = 0.11$} and \nhanp{$\eta_{\mathrm{ProxSpiderBoost}} = 0.15$}, respectively.

Figure~\ref{fig:nn_exam1_mnist_150} shows the convergence of different variants of ProxSARAH, ProxSpiderBoost, ProxSVRG, and ProxSGD on three criteria for  \texttt{mnist}: training loss values, the absolute norm of gradient mapping, and the test accuracy.

\begin{figure}[hpt!]
\vspace{-1ex}
\begin{center}
\includegraphics[width = 1\textwidth]{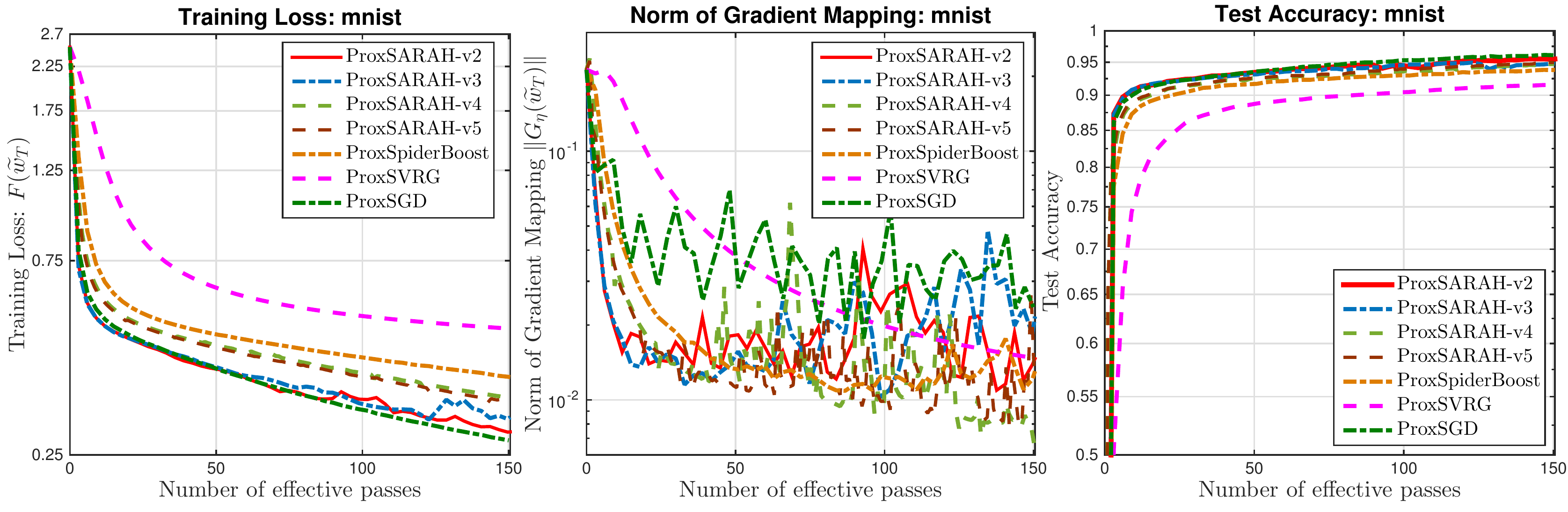}  
\includegraphics[width = 1\textwidth]{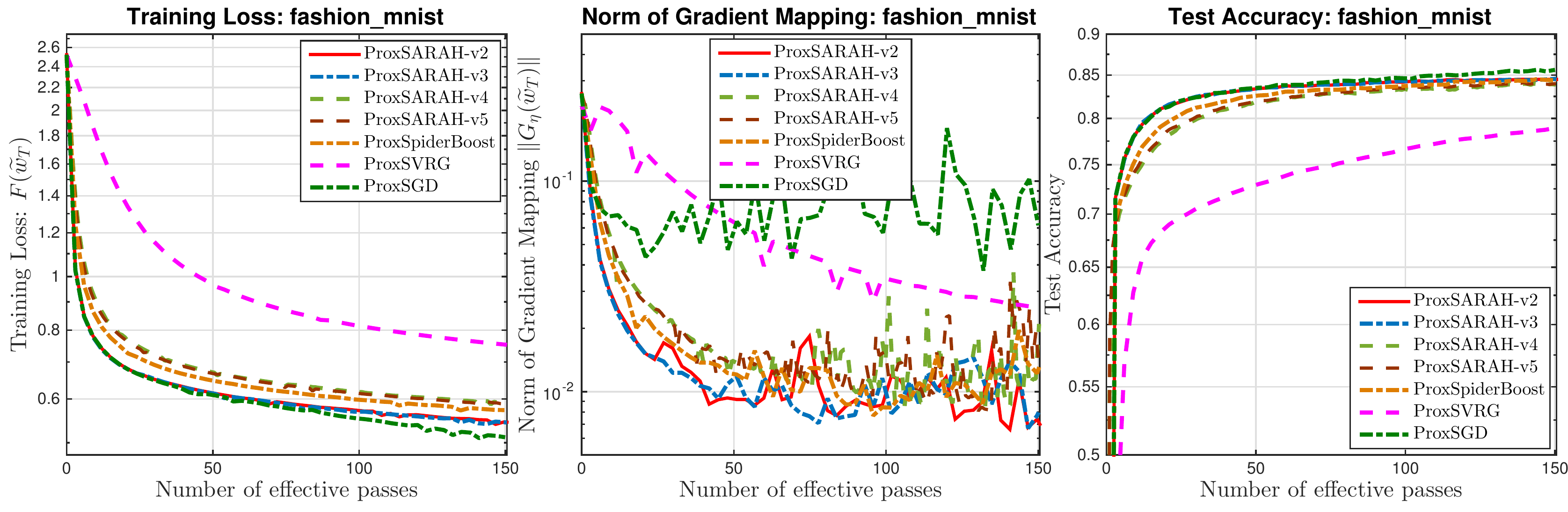} 
\vspace{-2ex}
\caption{The training loss, gradient mapping, and test accuracy on \texttt{mnist} $($top line$)$ and \texttt{fashion\_mnist} $($bottom line$)$ of $7$ algorithms.}\label{fig:nn_exam1_mnist_150}
\end{center}
\vspace{-3ex}
\end{figure}

In this example, ProxSGD appears to be the best in terms of training loss and test accuracy. However, the norm of gradient mapping is rather different from others, relatively large, and oscillated.
ProxSVRG is clearly slower than ProxSpiderBoost due to smaller learning rate.
The four variants of ProxSARAH perform relatively well, but the first and second variants seem to be slightly better. 
Note that the norm of gradient mapping tends to be decreasing but still oscillated since perhaps we are taking the last iterate instead of a random choice of intermediate iterates as stated in the theory.

Finally, we test the above algorithm on \texttt{mnist} using a $784\times 800\times 10$ network as known to give a better test accuracy.
We run all 7 algorithms for 300 epochs and the result is given in Figure~\ref{fig:nn_exam3_mnist_150}.

\begin{figure}[hpt!]
\begin{center}
\includegraphics[width = 1\textwidth]{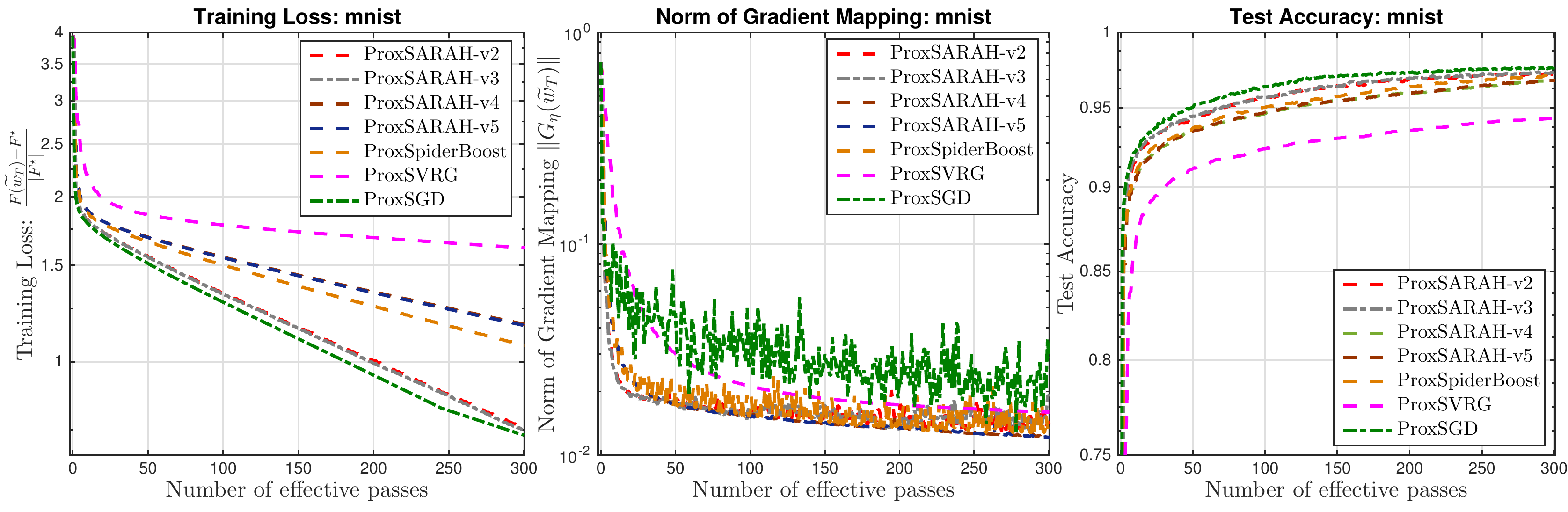}
\vspace{-2ex}
\caption{The training loss, gradient mapping, and test accuracy on \texttt{mnist} of $7$ algorithms on a $784\times 800\times 10$ neural network $($See \href{http://yann.lecun.com/exdb/mnist/}{http://yann.lecun.com/exdb/mnist/}$)$.}\label{fig:nn_exam3_mnist_150}
\end{center}
\vspace{-3ex}
\end{figure}

As we can see from Figure~\ref{fig:nn_exam3_mnist_150} that ProxSARAH-v2, ProxSARAH-v3, and ProxSGD performs really well in terms of training loss and test accuracy.
However, our method can achieve lower as well as less oscillated gradient mapping norm than ProxSGD.
Also, ProxSpiderBoost has similar performance to ProxSARAH-v4 and ProxSARAH-v5.
ProxSVRG again does not have a good performance in this example in terms of loss and test accuracy but is slightly better than ProxSGD regarding gradient mapping norm.

\beforesec
\section{Conclusions}\label{sec:concluding}
\aftersec
We have proposed a unified stochastic proximal-gradient framework using the SARAH estimator to solve both the composite expectation problem \eqref{eq:sopt_prob} and the composite finite sum problem \eqref{eq:finite_sum}.
Our algorithm is different from existing stochastic proximal gradient-type methods such as ProxSVRG and ProxSpiderBoost at which we have  an additional averaging step.
Moreover, it can work with both single sample and mini-batch using either constants or adaptive step-sizes.
Our adaptive step-size is updated in an increasing fashion as opposed to a diminishing step-size in ProxSGD.
We have established the best-known complexity bounds for all cases.
We believe that our methods give more flexibility to trade-off between step-sizes and mini-batch in order to obtain good performance in practice.  
The numerical experiments have shown that our methods are comparable or even outperform existing methods, especially in the single sample case.

\beforesec
\section*{Acknowledgements}
\aftersec
We would like to acknowledge the support for this project from the National Science Foundation (NSF grant DMS-1619884).


\appendix
\beforesec
\section{Technical lemmas}\label{apdx:le:mini_batch}
\aftersec
This appendix provides the missing proofs of Lemma~\ref{le:mini_batch} and one elementary result, Lemma~\ref{le:adaptive_step_size}, used in our analysis in the sequel.

\begin{lemma}\label{le:adaptive_step_size}
Given three positive constants $\nu$, $\delta$, and $L$, let $\set{\gamma_t}_{t=0}^m$ be a positive sequence satisfying the following conditions:
\begin{equation}\label{eq:key_cond} 
\left\{\begin{array}{ll}
L\gamma_m - \delta &\leq 0, \vspace{1ex}\\
\nu L^2\gamma_{t}\sum_{j=t+1}^m\gamma_j - \delta + L\gamma_t & \leq 0,~~ t=0,\cdots, m-1.
\end{array}\right.
\end{equation}
Then, the following statements hold:
\begin{itemize}
\vspace{-1.25ex}
\item[$\mathrm{(a)}$]
The sequence $\set{\gamma_t}_{t=0}^m$ computed recursively in a backward mode as
\begin{equation}\label{eq:update_gamma_t}
\gamma_m := \frac{\delta}{L},~~~\text{and}~~\gamma_t := \frac{\delta}{L\big[1 + \nu L\sum_{j=t+1}^m\gamma_j\big]},~~t=0,\cdots, m-1,
\end{equation}
tightly satisfies \eqref{eq:key_cond}.
Moreover, we have $\frac{\delta}{L(1 + \delta\nu m)} < \gamma_0 < \gamma_1 < \cdots < \gamma_m$ and 
\begin{equation}\label{eq:lower_bound}
\Sigma_m := \sum_{t=0}^m\gamma_t \geq \frac{2\delta(m+1)}{L\big[\sqrt{1 + 2\delta\nu m} + 1\big]}.
\end{equation}
\vspace{-1.5ex}
\item[$\mathrm{(b)}$]
The constant sequence $\set{\gamma_t}_{t=0}^m$ with $\gamma_t := \frac{2\delta}{L(\sqrt{1 + 4\delta\nu m} + 1)}$ satisfies \eqref{eq:key_cond}.
\vspace{-1ex}
\end{itemize}
\end{lemma}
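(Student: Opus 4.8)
The plan is to treat the two statements separately, handling the recursive construction in (a) first and then the constant sequence in (b).

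For part~(a), I would begin by verifying that the recursion \eqref{eq:update_gamma_t} makes both conditions in \eqref{eq:key_cond} hold with equality. The choice $\gamma_m = \delta/L$ gives $L\gamma_m - \delta = 0$ directly, and for $t < m$ the definition $\gamma_t = \delta/\big(L[1 + \nu L\sum_{j=t+1}^m\gamma_j]\big)$ rearranges to $L\gamma_t + \nu L^2\gamma_t\sum_{j=t+1}^m\gamma_j = \delta$, which is precisely the second line of \eqref{eq:key_cond} at equality. Monotonicity $\gamma_0 < \cdots < \gamma_m$ then follows because the tail sum $\sum_{j=t+1}^m\gamma_j$ strictly decreases as $t$ increases (one fewer positive term), so the denominator defining $\gamma_t$ strictly decreases and $\gamma_t$ strictly increases; comparing $\gamma_t$ with $\gamma_{t+1}$ directly makes this rigorous. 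The lower bound on $\gamma_0$ is then immediate: since every $\gamma_j \le \gamma_m = \delta/L$, we have $\sum_{j=1}^m\gamma_j \le m\delta/L$, whence $\gamma_0 \ge \delta/\big(L(1+\delta\nu m)\big)$, with the inequality strict once at least one $\gamma_j$ ($j<m$) lies strictly below $\delta/L$.

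The heart of part~(a) is the lower bound \eqref{eq:lower_bound} on $\Sigma_m$. Here I would sum all the equalities from the previous step: adding $L\gamma_m = \delta$ to $\sum_{t=0}^{m-1}\big(L\gamma_t + \nu L^2\gamma_t\sum_{j=t+1}^m\gamma_j\big) = m\delta$ yields $L\Sigma_m + \nu L^2\sum_{0\le t<j\le m}\gamma_t\gamma_j = (m+1)\delta$. Using the identity $\sum_{0\le t<j\le m}\gamma_t\gamma_j = \tfrac12\big(\Sigma_m^2 - \sum_{t=0}^m\gamma_t^2\big)$ turns this into the exact relation $\tfrac{\nu L^2}{2}\Sigma_m^2 + L\Sigma_m - \tfrac{\nu L^2}{2}\sum_{t=0}^m\gamma_t^2 = (m+1)\delta$. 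The crucial step — and the one I expect to be the main obstacle to recovering the stated constant — is to lower-bound the correction term sharply: the Cauchy--Schwarz (equivalently QM--AM) inequality gives $\sum_{t=0}^m\gamma_t^2 \ge \Sigma_m^2/(m+1)$, and substituting this produces $\tfrac{\nu L^2 m}{2(m+1)}\Sigma_m^2 + L\Sigma_m \ge (m+1)\delta$. Solving this upward-opening quadratic in $\Sigma_m$, keeping the positive root, and rationalizing yields exactly $\Sigma_m \ge \frac{2\delta(m+1)}{L[\sqrt{1+2\delta\nu m}+1]}$. Note that the naive estimate $\sum_{t=0}^m\gamma_t^2\ge 0$ only delivers $m+1$ under the square root, so the factor $\frac{m}{m+1}$ supplied by Cauchy--Schwarz is precisely what sharpens $m+1$ to $m$.

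For part~(b), with $\gamma_t \equiv c$ the tail sum is $\sum_{j=t+1}^m c = (m-t)c$, so the second condition in \eqref{eq:key_cond} reads $\nu L^2 c^2(m-t) + Lc - \delta \le 0$; its left-hand side is largest at $t=0$, so it suffices to check $\nu L^2 m\,c^2 + Lc - \delta \le 0$. I would then observe that the prescribed value $c = \frac{2\delta}{L(\sqrt{1+4\delta\nu m}+1)}$ is exactly the positive root of $\nu L^2 m\,c^2 + Lc - \delta = 0$ (the same rationalization as above, now with $m$ in place of $m+1$ and a factor $4$ in place of $2$), so the second condition holds with equality at $t=0$ and strictly for $t>0$. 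Finally, $\sqrt{1+4\delta\nu m}+1 \ge 2$ gives $c \le \delta/L$, verifying the first condition and completing the proof.
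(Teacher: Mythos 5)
Your proposal is correct and follows essentially the same route as the paper's own proof: verify that the recursion turns \eqref{eq:key_cond} into equalities, sum them, rewrite the cross terms via $\sum_{t<j}\gamma_t\gamma_j = \tfrac12(\Sigma_m^2 - \sum_t\gamma_t^2)$, apply Cauchy--Schwarz in the form $\sum_t\gamma_t^2 \ge \Sigma_m^2/(m+1)$, and solve the resulting quadratic inequality in $\Sigma_m$; part (b) is likewise identical. Your write-up is in fact slightly more explicit than the paper's in justifying monotonicity and in checking the first condition of \eqref{eq:key_cond} for part (b).
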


\begin{proof}
(a)~The sequence $\set{\gamma_t}_{t=0}^m$ given by \eqref{eq:update_gamma_t} is in fact computed from \eqref{eq:key_cond} by setting all the inequalities ``$\leq$'' to equalities ``$=$''.
Hence, it automatically satisfies \eqref{eq:key_cond}.
Moreover, it is obvious that $\gamma_0 < \gamma_1 < \cdots < \gamma_m$.
Since $\sum_{t=1}^m\gamma_t < m\gamma_m = \frac{m\delta}{L}$, we have $\gamma_0 > \frac{\delta}{L(1 + \delta\nu m)}$.

Let $\Sigma_m := \sum_{t=0}^m\gamma_t$.
Using $\Sigma_m$ into \eqref{eq:key_cond} with all equalities, we can rewrite it as
\begin{equation*}
\left\{\begin{array}{lll}
\nu L^2\gamma_m\Sigma_m &= \delta - L\gamma_m &+ {~} \nu L^2(\gamma_m^2 + \gamma_m\gamma_{m-1} +  \gamma_m\gamma_{m-2} + \cdots + \gamma_m\gamma_0) \vspace{1ex}\\
\nu L^2\gamma_{m-1}\Sigma_m &= \delta - L\gamma_{m-1} &+ {~} \nu L^2(\gamma_{m-1}^2 + \gamma_{m-1}\gamma_{m-2} +  \gamma_{m-1}\gamma_{m-3} +  \cdots + \gamma_{m-1}\gamma_0) \vspace{1ex}\\
\cdots & \cdots & \cdots \vspace{1ex}\\ 
\nu L^2\gamma_1\Sigma_m &= \delta - L\gamma_1 &+ {~} \nu L^2(\gamma_1^2 + \gamma_1\gamma_0) \vspace{1ex}\\
\nu L^2\gamma_0\Sigma_m &= \delta - L\gamma_0 &+ {~} \nu L^2\gamma_0^2.
\end{array}\right.
\end{equation*}
Summing up both sides of these equations, and using the definition of $\Sigma_m$ and $S_m^2 := \sum_{t=0}^m\hat{\eta}_t^2$, we obtain 
\begin{equation*}
\nu L^2\Sigma_m^2 = (m+1)\delta - L\Sigma_m + \frac{\nu L^2}{2}(\Sigma_m^2 + S_m^2).
\end{equation*}
Since $(m+1)S_m^2 \geq \Sigma_m^2$ by the Cauchy-Schwarz inequality, the last expression leads to 
\begin{equation*}
\nu L^2\Sigma_m^2 + 2L\Sigma_m - 2\delta (m+1) = \nu L^2S_m^2 \geq \frac{\nu L^2\Sigma_m^2}{m+1}.
\end{equation*}
Therefore,  by solving the quadratic inequation $\nu m L^2\Sigma_m^2 + 2(m+1)L\Sigma_m - 2\delta (m+1)^2 \geq 0$ in $\Sigma_m$ with $\Sigma_m > 0$, we obtain
\begin{equation*}
\Sigma_m \geq \frac{2\delta(m+1)}{L\big[1 + \sqrt{1 + 2\delta\nu m }\big]},
\end{equation*}
which is exactly \eqref{eq:lower_bound}.

\vspace{1ex}
\noindent (b)~Let $\gamma_t := \gamma > 0$ for $t=0,\cdots, m$. 
Then \eqref{eq:key_cond} holds if $\nu L^2\gamma^2m - \delta + L\gamma = 0$.
Solving this quadratic equation in $\gamma$ and noting that $\gamma > 0$, we obtain $\gamma = \frac{2\delta}{L(\sqrt{1 + 4\delta\nu m} + 1)}$.
\end{proof}

\beforepara
\begin{proof}(\textbf{\textit{The proof of Lemma~\ref{le:mini_batch}: Properties of stochastic estimators}}):
We only prove \eqref{eq:mini_batch_est2}, since other statements were proved in \citep{harikandeh2015stopwasting,lohr2009sampling,Nguyen2017_sarahnonconvex,Nguyen2018sgd_dnn}.
The proof of \eqref{eq:mini_batch_est2} for $\vert \Omega\vert = n$ was also given in \citep{Nguyen2018sgd_dnn} but under the $L$-smoothness of each $f_i$, we conduct this proof here by following the same path as in \citep{Nguyen2018sgd_dnn} for completeness.

Our goal is to prove \eqref{eq:mini_batch_est2b} by upper bounding the following quantity:
\begin{equation}\label{eq:At_term}
\Ac_t := \Exp{\Vert v_t - v_{t-1}\Vert^2 \mid \Fc_t} - \Vert \nabla{f}(w_t) - \nabla{f}(w_{t-1})\Vert^2.
\end{equation}
Let $\Fc_t := \sigma(w_0^{(s)}, \Bc_1, \cdots, \Bc_{t-1})$ be the $\sigma$-field generated by $w_0^{(s)}$ and mini-batches $\Bc_1, \cdots, \Bc_{t-1}$, and $\Fc_0 = \Fc_1 = \sigma(w_0^{(s)})$.
If we define $\Xi_{i} := \nabla{f_i}(w_t) -  \nabla{f_i}(w_{t-1})$, then using the update rule \eqref{eq:mini_batch}, we can upper bound $\Ac_t$ in \eqref{eq:At_term} as
\begin{equation*}
\begin{array}{ll}
\Ac_t &= \Exp{\Vert \frac{1}{b_t}\sum_{i\in\Bc_t}\Xi_i\Vert^2 \mid \Fc_t} - \Vert \frac{1}{n}\sum_{i=1}^n\Xi_i\Vert^2 \vspace{1ex}\\
&= \frac{1}{b_t^2}\Exp{\sum_{i\in\Bc_t}\sum_{j\in\Bc_t}\iprods{\Xi_i,\Xi_j} \mid \Fc_t} - \frac{1}{n^2}\sum_{i=1}^n\sum_{j=1}^n\iprods{\Xi_i,\Xi_j} \vspace{1ex}\\
&= \frac{1}{b_t^2}\Exp{\sum_{i,j\in\Bc_t,i\neq j}\iprods{\Xi_i,\Xi_j} + \sum_{i\in\Bc_t}\Vert\Xi_i \Vert^2 \mid \Fc_t} - \frac{1}{n^2}\sum_{i=1}^n\sum_{j=1}^n\iprods{\Xi_i,\Xi_j} \vspace{1ex}\\
&= \frac{1}{b_t^2}\Big[\frac{b_t(b_t-1)}{n(n-1)}\sum_{i,j=1,i\neq j}^n\iprods{\Xi_i,\Xi_j} + \frac{b_t}{n}\sum_{i=1}^n\Vert\Xi_i \Vert^2\Big]  - \frac{1}{n^2}\sum_{i=1}^n\sum_{j=1}^n\iprods{\Xi_i,\Xi_j} \vspace{1ex}\\
&= \frac{(b_t-1)}{b_tn(n-1)}\sum_{i,j=1}^n\iprods{\Xi_i,\Xi_j} +  \frac{(n - b_t)}{b_tn(n-1)}\sum_{i=1}^n\Vert\Xi_i \Vert^2 - \frac{1}{n^2}\sum_{i=1}^n\sum_{j=1}^n\iprods{\Xi_i,\Xi_j} \vspace{1ex}\\
&= \frac{(n - b_t)}{b_tn(n-1)}\sum_{i=1}^n\Vert \Xi_i\Vert^2 - \frac{(n - b_t)}{(n-1)b_t}\Vert \frac{1}{n} \sum_{i=1}^n\Xi_i\Vert^2 \vspace{1ex}\\
&= \frac{(n - b_t)}{b_t(n-1)}\frac{1}{n}\sum_{i=1}^n\Vert \nabla{f_i}(w_t) - \nabla{f_i}(w_{t-1})\Vert^2 - \frac{(n - b_t)}{(n-1)b_t}\Vert \nabla{f}(w_t) - \nabla{f}(w_{t-1})\Vert^2,
\end{array}
\end{equation*}
where we use the facts that 
\begin{equation*}
\begin{array}{ll}
& \Exp{\sum_{i,j\in\Bc_t,i\neq j}\iprods{\Xi_i,\Xi_j} \mid \Fc_t} = \frac{b_t(b_t-1)}{n(n-1)}\sum_{i,j=1,i\neq j}^n\iprods{\Xi_i,\Xi_j}\vspace{1ex}\\
\text{and}~& \Exp{ \sum_{i\in\Bc_t}\Vert \Xi_i\Vert^2  \mid \Fc_t} = \frac{b_t}{n}\sum_{i=1}^n\Vert\Xi_i\Vert^2
\end{array}
\end{equation*}
in the third line of the above derivation.
Rearranging the estimate $\Ac_t$, we obtain \eqref{eq:mini_batch_est2}.

To prove \eqref{eq:mini_batch_est2b}, we define $\Xi_i := \nabla_w{f}(w_t;\xi_i) - \nabla_w{f}(w_{t-1};\xi_i)$.
Clearly, $\Exp{\Xi_i \mid \Fc_t} = \nabla{f}(w_t) - \nabla{f}(w_{t-1})$ and  $v_t - v_{t-1} = \frac{1}{b_t}\sum_{i\in\Bc_t}\Xi_i$.
Similar to \eqref{eq:mini_batch_est1}, we have
\begin{equation*}
\begin{array}{ll}
\Exp{\Vert (v_t - v_{t-1}) - \Exp{\Xi_i\mid \Fc_t} \Vert^2 \mid \Fc_t} &= \frac{1}{b_t}\Exp{ \Vert \Xi_i - \Exp{\Xi_i\mid \Fc_t} \Vert^2 \mid \Fc_t}.
\end{array}
\end{equation*}
Using the fact that $\Exp{\Vert X - \Exp{X}\Vert^2} = \Exp{\Vert X\Vert^2} - \Vert\Exp{X}\Vert^2$, after rearranging, we obtain from the last expression that 
\begin{equation*}
\begin{array}{ll}
\Exp{\Vert v_t - v_{t-1}\Vert^2 \mid \Fc_t} &= \left(1 - \frac{1}{b_t}\right)\Vert \nabla{f}(w_t) - \nabla{f}(w_{t-1})\Vert^2 \vspace{1ex}\\
& + {~} \frac{1}{b_t}\Exp{\Vert \nabla_w{f}(w_t;\xi) - \nabla_w{f}(w_{t-1};\xi)\Vert^2 \mid \Fc_t},
\end{array}
\end{equation*}
which is indeed \eqref{eq:mini_batch_est2b}.
\end{proof}

\beforesec
\section{The proof of technical results in Section~\ref{sec:alg_section}}
\aftersec
We provide the full proof of the results in Section~\ref{sec:alg_section}.

\beforesubsec
\subsection{The proof of Lemma~\ref{le:key_est1}: The analysis of the inner loop}\label{apdx:le:key_est1}
\aftersubsec
From the update $w_{t+1}^{(s)} := (1-\gamma_t)w_t^{(s)} + \gamma_t\widehat{w}_{t+1}^{(s)}$, we have $w_{t+1}^{(s)} - w_t^{(s)} = \gamma_t(\widehat{w}_{t+1}^{(s)} - w_t^{(s)})$.
Firstly, using the $L$-smoothness of $f$ from \eqref{eq:upper_bound_fw} of Assumption~\ref{as:A2}, we can derive
\begin{equation}\label{eq:lm31_est1}
\begin{array}{ll}
f(w_{t+1}^{(s)}) &\leq f(w_t^{(s)}) + \iprods{\nabla{f}(w_t^{(s)}), w_{t+1}^{(s)} - w_t^{(s)}} + \frac{L}{2}\Vert w_{t+1}^{(s)} - w_t^{(s)}\Vert^2 \vspace{1ex}\\
&= f(w_t^{(s)}) + \gamma_t\iprods{\nabla{f}(w_t^{(s)}), \widehat{w}_{t+1}^{(s)} - w_t^{(s)}} + \frac{L\gamma_t^2}{2}\Vert \widehat{w}_{t+1}^{(s)} - w_t^{(s)}\Vert^2.
\end{array}
\end{equation}
Next, using the convexity of $\psi$, one can show that
\begin{equation}\label{eq:lm31_est2}
\psi(w_{t+1}^{(s)}) \leq (1-\gamma_t)\psi(w_t^{(s)}) + \gamma_t\psi(\widehat{w}_{t+1}^{(s)}) \leq \psi(w_t^{(s)}) + \gamma_t\iprods{\nabla{\psi}(\widehat{w}_{t+1}^{(s)}), \widehat{w}_{t+1}^{(s)} - w_t^{(s)}},
\end{equation}
where $\nabla{\psi}(\widehat{w}_{t+1}^{(s)}) \in \partial{\psi}(\widehat{w}_{t+1}^{(s)})$.

By the optimality condition of $\widehat{w}_{t+1}^{(s)} := \prox_{\eta_t\psi}(w_t^{(s)} - \eta_tv_t^{(s)})$, we have $\nabla{\psi}(\widehat{w}_{t+1}^{(s)}) = -v_t^{(s)} - \frac{1}{\eta_t}(\widehat{w}_{t+1}^{(s)} - w_t^{(s)})$ for some $\nabla{\psi}(\widehat{w}_{t+1}^{(s)}) \in \partial{\psi}(\widehat{w}_{t+1}^{(s)})$.
Substituting this expression into \eqref{eq:lm31_est2}, we obtain
\begin{equation}\label{eq:lm31_est3}
\psi(w_{t+1}^{(s)}) \leq \psi(w_t^{(s)}) + \gamma_t\iprods{v_t^{(s)}, w_t^{(s)} - \widehat{w}_{t+1}^{(s)}} - \frac{\gamma_t}{\eta_t}\Vert\widehat{w}_{t+1}^{(s)} - w_t^{(s)} \Vert^2.
\end{equation}
Combining \eqref{eq:lm31_est1} and \eqref{eq:lm31_est3}, and then using $F(w) := f(w) + \psi(w)$ yields
\begin{equation}\label{eq:lm31_est4}
F(w_{t+1}^{(s)}) \leq F(w_t^{(s)}) + \gamma_t\iprods{\nabla{f}(w_t^{(s)}) - v_t^{(s)}, \widehat{w}_{t+1}^{(s)} - w_t^{(s)}} - \Big(\frac{\gamma_t}{\eta_t} - \frac{L\gamma_t^2}{2}\Big)\Vert\widehat{w}_{t+1}^{(s)} - w_t^{(s)}\Vert^2.
\end{equation}
Now, for any $c_t > 0$, we have
\begin{equation*}
\begin{array}{ll}
\iprods{\nabla{f}(w_t^{(s)}) - v_t^{(s)}, \widehat{w}_{t+1}^{(s)} - w_t^{(s)}} &= \frac{1}{2c_t}\Vert\nabla{f}(w_t^{(s)}) - v_t^{(s)} \Vert^2 + \frac{c_t}{2}\Vert \widehat{w}_{t+1}^{(s)} - w_t^{(s)}\Vert^2 \vspace{1ex}\\
&- {~} \frac{1}{2c_t}\Vert \nabla{f}(w_t^{(s)}) - v_t^{(s)} - c_t(\widehat{w}_{t+1}^{(s)} - w_t^{(s)})\Vert^2.
\end{array}
\end{equation*}
Utilizing this inequality, we can rewrite \eqref{eq:lm31_est4} as
\begin{equation*} 
F(w_{t+1}^{(s)}) \leq F(w_t^{(s)}) + \frac{\gamma_t}{2c_t}\Vert\nabla{f}(w_t^{(s)}) - v_t^{(s)} \Vert^2 - \Big(\frac{\gamma_t}{\eta_t} - \frac{L\gamma_t^2}{2}- \frac{\gamma_tc_t}{2}\Big)\Vert\widehat{w}_{t+1}^{(s)} - w_t^{(s)}\Vert^2 - \sigma_t^{(s)},
\end{equation*}
where $\sigma_t^{(s)} := \frac{\gamma_t}{2c_t}\Vert \nabla{f}(w_t^{(s)}) - v_t^{(s)} - c_t(\widehat{w}_{t+1}^{(s)} - w_t^{(s)})\Vert^2 \geq 0$.

Taking expectation both sides of this inequality over the entire history, we obtain
\begin{equation}\label{eq:lm31_est6} 
\begin{array}{ll}
\Exp{F(w_{t+1}^{(s)})} &\leq \Exp{F(w_t^{(s)})} + \frac{\gamma_t}{2c_t}\Exp{\Vert\nabla{f}(w_t^{(s)}) - v_t^{(s)}\Vert^2} \vspace{1ex}\\
& - {~} \Big(\frac{\gamma_t}{\eta_t} - \frac{L\gamma_t^2}{2}- \frac{\gamma_tc_t}{2}\Big)\Exp{\Vert\widehat{w}_{t+1}^{(s)} - w_t^{(s)}\Vert^2} - \Exp{\sigma_t^{(s)}}.
\end{array}
\end{equation}
Next, recall from \eqref{eq:gradient_mapping} that $G_{\eta}(w) := \frac{1}{\eta}\big(w - \mathrm{prox}_{\eta\psi}(w - \eta\nabla{f}(w))\big)$ is the gradient mapping of $F$.
In this case, it is obvious that
\begin{equation*}
 \eta_t\Vert G_{\eta_t}(w_t^{(s)})\Vert = \Vert w_t^{(s)} - \mathrm{prox}_{\eta_t\psi}(w_t^{(s)} - \eta_t\nabla{f}(w_t^{(s)}))\Vert.   
\end{equation*}
Using this definition, the triangle inequality, and the nonexpansive property $\Vert \prox_{\eta\psi}(z) - \prox_{\eta\psi}(w)\Vert \leq \Vert z - w\Vert$ of $\prox_{\eta\psi}$, we can derive that
\begin{equation*}
\begin{array}{ll}
\eta_t\Vert G_{\eta_t}(w_t^{(s)})\Vert &\leq \Vert \widehat{w}_{t+1}^{(s)} - w_t^{(s)}\Vert + \Vert \mathrm{prox}_{\eta_t\psi}(w_t^{(s)} - \eta_t\nabla{f}(w_t^{(s)})) - \widehat{w}_{t+1}^{(s)}\Vert \vspace{1ex}\\
& = \Vert \widehat{w}_{t+1}^{(s)} - w_t^{(s)}\Vert + \Vert \prox_{\eta_t\psi}(w_t^{(s)} - \eta_t\nabla{f}(w_t^{(s)})) - \prox_{\eta_t\psi}(w_t^{(s)} - \eta_tv^{(s)}_t)\Vert \vspace{1ex}\\
& \leq \Vert\widehat{w}_{t+1}^{(s)} - w_t^{(s)}\Vert + \eta_t\Vert \nabla{f}(w_t^{(s)}) - v_t^{(s)}\Vert.
\end{array}
\end{equation*}
Now, for any $r_t > 0$, the last estimate leads to
\begin{equation*} 
\eta_t^2\Exp{\Vert G_{\eta_t}(w_t^{(s)})\Vert^2} \leq \left(1+\tfrac{1}{r_t}\right)\Exp{\Vert\widehat{w}_{t+1}^{(s)} - w_t^{(s)}\Vert^2} +  (1+r_t)\eta_t^2\Exp{\Vert \nabla{f}(w_t^{(s)}) - v_t^{(s)}\Vert^2}.
\end{equation*}
Multiplying this inequality by $\frac{s_t}{2} > 0$ and adding the result to \eqref{eq:lm31_est6}, we finally get
\begin{equation*} 
\begin{array}{ll}
\Exp{F(w_{t+1}^{(s)})} &\leq \Exp{F(w_t^{(s)})} - \frac{s_t\eta_t^2}{2}\Exp{\Vert G_{\eta_t}(w_t^{(s)})\Vert^2} \vspace{1ex}\\
& +  {~} \frac{1}{2}\Big[\frac{\gamma_t}{c_t} +  (1+r_t)s_t\eta_t^2\Big] \Exp{\Vert\nabla{f}(w_t^{(s)}) - v_t^{(s)}\Vert^2} \vspace{1ex}\\
&- {~} \frac{1}{2}\Big[\frac{2\gamma_t}{\eta_t} - L\gamma_t^2 -  \gamma_tc_t - s_t\left(1+\frac{1}{r_t}\right)\Big] \Exp{\Vert\widehat{w}_{t+1}^{(s)} - w_t^{(s)}\Vert^2}  - \Exp{\sigma_t^{(s)}}.
\end{array}
\end{equation*}
Summing up this inequality from $t=0$ to $t=m$, we obtain
\begin{equation}\label{eq:lm31_est7}
{\!\!\!\!}\begin{array}{ll}
\Exp{F(w_{m+1}^{(s)})} &\leq \Exp{F(w_0^{(s)})} +\dfrac{1}{2}\displaystyle\sum_{t=0}^m \Big[\frac{\gamma_t}{c_t} + (1+r_t)s_t\eta_t^2\Big] \Exp{\Vert\nabla{f}(w_t^{(s)}) - v_t^{(s)}\Vert^2}  \vspace{1ex}\\
& - {~} \dfrac{1}{2}\displaystyle\sum_{t=0}^m\Big[\frac{2\gamma_t}{\eta_t} - L\gamma_t^2 - \gamma_tc_t - s_t\Big(1+\frac{1}{r_t}\Big)\Big] \Exp{\Vert\widehat{w}_{t+1}^{(s)} - w_t^{(s)}\Vert^2} \vspace{1ex}\\
&- {~} \displaystyle\sum_{t=0}^m\frac{s_t\eta_t^2}{2}\Exp{\Vert G_{\eta_t}(w_t^{(s)})\Vert^2} - \displaystyle\sum_{t=0}^m\Exp{\sigma_t^{(s)}}.
\end{array}{\!\!\!\!}
\end{equation}
We consider two cases:

\noindent\textbf{Case 1:}
If $\vert \Omega \vert = n$, i.e. Algorithm~\ref{alg:prox_sarah} solves \eqref{eq:finite_sum}, then 
from \eqref{eq:mini_batch_est2} of Lemma~\ref{le:mini_batch}, the $L$-smoothness condition \eqref{eq:L_smooth_fi} in Assumption~\ref{as:A2}, the choice $\hat{b}_t^{(s)} = \hat{b} \geq 1$, and $w_j^{(s)} - w_{j-1}^{(s)} = \gamma_{j-1}(\widehat{w}_j^{(s)} - w_{j-1}^{(s)})$, we can estimate
\begin{equation*}
\begin{array}{ll}
\cExp{\norms{v^{(s)}_j - v^{(s)}_{j-1}}^2}{\Fc_{j}}  &\overset{\tiny\eqref{eq:mini_batch_est2} }{=} \frac{n(\hat{b}-1)}{\hat{b}(n-1)} \norms{\nabla{f}(w_j) - \nabla{f}(w_{j-1})}^2 \vspace{1ex}\\
& + {~} \frac{n-\hat{b}}{\hat{b}(n-1)}\frac{1}{n}\sum_{i=1}^n\norms{\nabla{f_i}(w_j^{(s)}) - \nabla{f_i}(w^{(s)}_{j-1})}^2  \vspace{1ex}\\
&\overset{\tiny\eqref{eq:L_smooth_fi}}{\leq} \norms{\nabla{f}(w_j) - \nabla{f}(w_{j-1})}^2 +  \frac{(n-\hat{b})L^2}{\hat{b}(n-1)}\norms{w_j^{(s)} - w_{j-1}^{(s)}}^2 \vspace{1ex}\\
&=  \norms{\nabla{f}(w_j) - \nabla{f}(w_{j-1})}^2 +  \frac{(n-\hat{b}) L^2\gamma_{j-1}^2}{\hat{b}(n-1)}\norms{\widehat{w}_j^{(s)} - w_{j-1}^{(s)}}^2.
\end{array}
\end{equation*}
\noindent\textbf{Case 2:}
If $\vert \Omega \vert \neq n$, i.e. Algorithm~\ref{alg:prox_sarah} solves \eqref{eq:sopt_prob}, then from \eqref{eq:mini_batch_est2b} of Lemma~\ref{le:mini_batch}, we have
\begin{equation*}
\begin{array}{ll}
\cExp{\norms{v^{(s)}_j - v^{(s)}_{j-1}}^2}{\Fc_{j}}  &\overset{\tiny\eqref{eq:mini_batch_est2b} }{=} \left(1 - \frac{1}{\hat{b}}\right)\Vert \nabla{f}(w_j) - \nabla{f}(w_{j-1})\Vert^2 \vspace{1ex}\\
&  + {~}  \frac{1}{\hat{b}}\Exp{\norms{\nabla_w{f}(w_j; \xi) - \nabla_w{f}(w_{j-1};\xi)}^2  \mid \Fc_j}  \vspace{1ex}\\
& \overset{\tiny\eqref{eq:L_smooth}}{\leq} \Vert \nabla{f}(w_j) - \nabla{f}(w_{j-1})\Vert^2 + \frac{L^2}{\hat{b}}\norms{w_j^{(s)} - w_{j-1}^{(s)}}^2 \vspace{1ex}\\
&=\Vert \nabla{f}(w_j) - \nabla{f}(w_{j-1})\Vert^2 +  \frac{L^2\gamma_{j-1}^2}{\hat{b}}\norms{\widehat{w}_j^{(s)} - w_{j-1}^{(s)}}^2.
\end{array}
\end{equation*}
Using either one of the two last inequalities and \eqref{eq:biased_sum}, then taking the full expectation, we can derive
\begin{equation}\label{eq:lm31_est8}
{\!\!\!\!\!\!\!\!}\begin{array}{ll}
\Exp{\Vert\nabla{f}(w_t^{(s)}) - v_t^{(s)}\Vert^2} {\!\!\!}&= \Exp{\norms{\nabla{f}(w_0^{(s)}) - v_0^{(s)}}^2}  \sum_{j=1}^{t}\Exp{\Vert v_j^{(s)} - v_{j-1}^{(s)}\Vert^2} \vspace{1ex}\\
& - {~} \sum_{j=1}^t \Exp{\Vert \nabla{f}(w_j) - \nabla{f}(w_{j-1})\Vert^2} \vspace{1ex}\\
&\leq  \Exp{\norms{\nabla{f}(w_0^{(s)}) - v_0^{(s)}}^2}  + \rho L^2\sum_{j=1}^t\gamma_{j-1}^2\Exp{\norms{\widehat{w}_j^{(s)} - w_{j-1}^{(s)}}^2} \vspace{1ex}\\
&=  \bar{\sigma}^{(s)} +  \rho L^2\sum_{j=1}^{t}\gamma_{j-1}^2\Exp{\Vert\widehat{w}_j^{(s)} - w_{j-1}^{(s)}\Vert^2},
\end{array}{\!\!\!\!\!\!\!\!}
\end{equation}
where $\bar{\sigma}^{(s)}  := \Exp{\norms{\nabla{f}(w_0^{(s)}) - v_0^{(s)}}^2} \geq 0$, and $\rho := \frac{1}{\hat{b}}$ if Algorithm~\ref{alg:prox_sarah} solves \eqref{eq:sopt_prob}, and $\rho := \frac{n-\hat{b}}{\hat{b}(n-1)}$ if Algorithm~\ref{alg:prox_sarah} solves \eqref{eq:finite_sum}.

\noindent Substituting the estimate \eqref{eq:lm31_est8} into \eqref{eq:lm31_est7}, we finally arrive at
\begin{equation*} 
\begin{array}{ll}
\Exp{F(w_{m+1}^{(s)})} {\!\!\!}&\leq \Exp{F(w_0^{(s)})} + \frac{\rho L^2}{2}\displaystyle\sum_{t=0}^m\Big[\frac{\gamma_t}{c_t} + (1+r_t)s_t\eta_t^2\Big]\displaystyle\sum_{j=1}^{t}\gamma_{j-1}^2\Exp{\Vert\widehat{w}_j^{(s)} - w_{j-1}^{(s)}\Vert^2} \vspace{1ex}\\
& - {~} \displaystyle\frac{1}{2}\sum_{t=0}^m\Big[\frac{2\gamma_t}{\eta_t} - L\gamma_t^2 - \gamma_t c_t  -  s_t\Big(1+\frac{1}{r_t}\Big)\Big]\Exp{\Vert\widehat{w}_{t+1}^{(s)} - w_t^{(s)}\Vert^2} \vspace{1ex}\\
& - {~} \displaystyle\sum_{t=0}^m\frac{s_t\eta_t^2}{2}\Exp{\Vert G_{\eta_t}(w_t^{(s)})\Vert^2} - \displaystyle\sum_{t=0}^m\Exp{\sigma_t^{(s)}} + \displaystyle\frac{1}{2}\sum_{t=0}^m\Big[\dfrac{\gamma_t}{c_t} +  (1+r_t)s_t\eta_t^2\Big]\bar{\sigma}^{(s)},
\end{array}
\end{equation*}
which is exactly \eqref{eq:key_est1}.
\Eproof

\beforesubsec
\subsection{The proof of Lemma~\ref{le:constant_stepsize}: The selection of constant step-sizes}\label{apdx:le:constant_stepsize}
\aftersubsec
Let us first fix all the parameters and step-sizes as constants as follows:
\begin{equation*} 
c_t := 1, ~\gamma_t := \gamma \in (0, 1], ~~\eta_t := \eta > 0, ~~r_t := 1, ~~\text{and}~~s_t := \gamma > 0.
\end{equation*}
We also denote  $a_t^{(s)} := \Exp{\Vert \widehat{w}_{t+1}^{(s)} - w_t^{(s)}\Vert^2} \geq 0$.

\noindent 
Let  $\rho := \frac{1}{\hat{b}}$ if Algorithm~\ref{alg:prox_sarah} solves \eqref{eq:sopt_prob} and $\rho := \frac{n-\hat{b}}{\hat{b}(n-1)}$ if Algorithm~\ref{alg:prox_sarah} solves \eqref{eq:finite_sum}.
Using these expressions into \eqref{eq:key_est1}, we can easily show that
\begin{equation}\label{eq:est12}
\begin{array}{ll}
\Exp{F(w_{m+1}^{(s)})} &\leq \Exp{F(w_0^{(s)})} + \frac{\rho L^2\gamma^3}{2}\big[1 + 2\eta^2\big]\displaystyle\sum_{t=0}^m\sum_{j=1}^{t}a_{j-1}^{(s)} \vspace{1ex}\\
& - {~} \frac{\gamma}{2}\left[\frac{2}{\eta} - L\gamma - 3\right]\displaystyle\sum_{t=0}^ma_t^{(s)} - \displaystyle\tfrac{\gamma\eta^2}{2} \sum_{t=0}^m\Exp{\Vert G_{\eta_t}(w_t^{(s)})\Vert^2 } \vspace{1ex}\\
& + {~} \tfrac{\gamma}{2}\left[1 + 2\eta^2\right](m+1)\bar{\sigma}^{(s)}  -  \displaystyle\sum_{t=0}^m\Exp{\sigma_t^{(s)}}  \vspace{1ex}\\
&= \Exp{F(w_0^{(s)})}  -  \frac{\gamma\eta^2}{2}\displaystyle\sum_{t=0}^m\Exp{\Vert G_{\eta_t}(w_t^{(s)})\Vert^2} - \displaystyle\sum_{t=0}^m\Exp{\sigma_t^{(s)}} \vspace{1ex}\\
& + {~} \tfrac{\gamma}{2}\left[1 + 2\eta^2\right](m+1)\bar{\sigma}^{(s)}  + \Tc_m,
\end{array}
\end{equation}
where $\Tc_m$ is defined as
\begin{equation*}
\Tc_m :=  \frac{\rho L^2\gamma^3\left(1 + 2\eta^2\right)}{2}\sum_{t=0}^m\sum_{j=1}^{t}a_{j-1}^{(s)} - \frac{\gamma}{2}\left[\frac{2}{\eta} - L\gamma - c - \left(1 + \frac{1}{r}\right)\right]\sum_{t=0}^ma_t^{(s)}.   
\end{equation*}
Our goal is to choose $\eta > 0$, and $\gamma\in (0, 1]$ such that $\Tc_m \leq 0$.
We first rewrite $\Tc_m$ as follows:
\begin{equation*}
\begin{array}{ll}
\Tc_m &=  \frac{\rho L^2\gamma^3\left(1 + 2\eta^2\right)}{2}\Big[m a_0^{(s)} + (m-1)a_1^{(s)} + \cdots + 2a_{m-2}^{(s)} + a_{m-1}^{(s)}\Big]  \vspace{1ex}\\
& - {~} \frac{\gamma}{2}\Big[\frac{2}{\eta} - L\gamma - 3\Big] \left[a_0^{(s)} + a_1^{(s)} + \cdots + a_m^{(s)}\right].
\end{array}
\end{equation*}
By synchronizing the coefficients of the terms $a_0^{(s)}, a_1^{(s)}, \cdots, a_m^{(s)}$, to guarantee $\Tc_m \leq 0$, we need to \nhanp{satisfy}
\begin{equation}\label{eq:key_cond_new}
\left\{\begin{array}{ll}
\rho\left(1 + 2\eta^2\right)L^2\gamma^2m - \left[\frac{2}{\eta} - L\gamma - 3\right] &\leq 0, \vspace{1ex}\\ 
\frac{2}{\eta} - L\gamma - 3 &\geq 0.
\end{array}\right.
\end{equation}
Assume that $\frac{2}{\eta} - L\gamma - 3 = 1 > 0$. 
This implies that $\eta = \frac{2}{L\gamma + 4}$.
Next, since $L\gamma > 0$, we have $\eta \leq \frac{1}{2}$. Therefore, we can upper bound
\begin{equation*}
\rho L^2\gamma^2m(1 + 2\eta^2) - \left[\frac{2}{\eta} - L\gamma - 3\right] \leq  \frac{3\rho L^2\gamma^2m}{2} - 1 = 0.
\end{equation*}
The last equation and $\eta = \frac{2}{L\gamma + 4}$ lead to
\begin{equation*}
\gamma := \frac{1}{L\sqrt{\omega m}}~~~\text{and}~~\eta := \frac{2\sqrt{\omega m}}{4\sqrt{\omega m} + 1},
\end{equation*}
which is exactly \eqref{eq:constant_stepsize}, where $\omega := \frac{3(n - \hat{b})}{2\hat{b}(n-1)}$ for \eqref{eq:finite_sum} and $\omega := \frac{3}{2\hat{b}}$ for \eqref{eq:sopt_prob}.

Finally, using this choice \eqref{eq:constant_stepsize} of the step-sizes, we can derive that
\begin{equation}\label{eq:key_est}
{\!\!\!}\Exp{F(w_{m+1}^{(s)}} \leq \Exp{F(w_0^{(s)})} - \frac{\gamma\eta^2}{2}\sum_{t=0}^m\Exp{\Vert G_{\eta}(w_t^{(s)})\Vert^2} - \sum_{t=0}^m\Exp{\sigma_t^{(s)}} + \frac{\gamma\theta}{2}(m+1)\bar{\sigma}^{(s)},{\!\!\!}
\end{equation}
which is exactly \eqref{eq:key_est2}, where $\theta := 1 + 2\eta^2 \leq  \frac{3}{2}$.
\Eproof

\beforesubsec
\subsection{The proof of Theorem~\ref{th:convergence_composite_finite_sum_b}: The adaptive step-size case}\label{apdx:th:convergence_composite_finite_sum_b}
\aftersubsec
Let $\beta_t := \frac{\gamma_t}{c_t} + (1+r_t)s_t\eta_t^2$ and $\kappa_t := \frac{2\gamma_t}{\eta_t} -  L\gamma_t^2- \gamma_t c_t - s_t\left(1+\frac{1}{r_t} \right)$ be defined in Lemma~\ref{le:key_est1}.
From \eqref{eq:key_est1} of Lemma~\ref{le:key_est1} we have
\begin{equation}\label{eq:key_est1_new}
\begin{array}{ll}
\Exp{F(w_{m+1}^{(s)})} {\!\!\!\!}&\leq  \Exp{F(w_0^{(s)})}    - \displaystyle\sum_{t=0}^m\frac{s_t\eta_t^2}{2}\Exp{\Vert G_{\eta_t}(w_t^{(s)})\Vert^2}\vspace{1ex}\\
& + {~} \dfrac{1}{2}\bar{\sigma}^{(s)}\Big(\displaystyle\sum_{t=0}^m\beta_t\Big)  -  \displaystyle\sum_{t=0}^m\Exp{\sigma_t^{(s)}} + \Tc_m,
\end{array}
\end{equation}
where
\begin{equation*}
\Tc_m := \displaystyle\frac{L^2(n-\hat{b})}{2\hat{b}(n-1)}\sum_{t=0}^m\beta_t\displaystyle\sum_{j=1}^{t}\gamma_{j-1}^2\Exp{\Vert\widehat{w}_j^{(s)} - w_{j-1}^{(s)}\Vert^2} - \displaystyle\frac{1}{2}\sum_{t=0}^m\kappa_t \Exp{\Vert\widehat{w}_{t+1}^{(s)} - w_t^{(s)}\Vert^2}.
\end{equation*}
Now, to guarantee $\Tc_m \leq 0$, let us choose all the parameters such that
\begin{equation}\label{eq:param_cond10b}
\left\{\begin{array}{ll}
\kappa_m &= 0, \vspace{1ex}\\
\frac{(n-\hat{b})}{\hat{b}(n-1)}L^2\gamma_{t}^2\sum_{j=t+1}^m\beta_j - \kappa_t & = 0,~~ t=0,\cdots, m-1.
\end{array}\right.
\end{equation}
Then, the above inequality reduces to
\begin{equation}\label{eq:another_est} 
\Exp{F(w_{m+1}^{(s)})} \leq \Exp{F(w_0^{(s)})}   - \displaystyle\sum_{t=0}^m\frac{s_t\eta_t^2}{2}\Exp{\Vert G_{\eta_t}(w_t^{(s)})\Vert^2}   +  \frac{1}{2}\sum_{t=0}^m\beta_t\bar{\sigma}^{(s)}.
\end{equation}
If we choose $c_t = r_t = 1$, $s_t = \gamma_t$, fix $\eta_t = \eta \in (0, \frac{2}{3})$, and define $\delta := \frac{2}{\eta} - 3 > 0$, then \eqref{eq:param_cond10b} reduces to
\begin{equation}\label{eq:param_cond10c}
\left\{\begin{array}{ll}
\delta - L\gamma_m &= 0, \vspace{1ex}\\
\frac{L^2(n-\hat{b})(1+2\eta^2)}{\hat{b}(n-1)}\gamma_{t}\sum_{j=t+1}^m\gamma_j - \delta + L\gamma_t & = 0,~~ t=0,\cdots, m-1.
\end{array}\right.
\end{equation}
Applying Lemma~\ref{le:adaptive_step_size}(a) with $\nu = \omega_{\eta} := \frac{(n-\hat{b})(1+2\eta^2)}{\hat{b}(n-1)}$, 
we obtain from \eqref{eq:param_cond10c} that
\begin{equation}\label{eq:update_param20b} 
\gamma_m := \frac{\delta}{L},~~~\text{and}~~\gamma_t := \frac{\delta}{L\big[1 + \omega_{\eta} L \sum_{j=t+1}^m\gamma_j\big]},~~t=0,\cdots, m-1.
\end{equation}
Moreover, we have
\begin{equation*} 
\frac{\delta}{L(1+ \omega_{\eta}\delta m)} < \gamma_0 < \gamma_1 < \cdots < \gamma_m,~~~\text{and}~~~\Sigma_m := \sum_{t=0}^m\gamma_t \geq \frac{2\delta(m+1)}{L(\sqrt{2\omega_{\eta}\delta m + 1} + 1)},
\end{equation*}
which proves \eqref{eq:Sigma_lower_bound}.

On the other hand, by using \eqref{eq:mini_batch_est1b}, the estimate \eqref{eq:another_est} leads to
\begin{equation*}
\frac{1}{S\Sigma_m}\sum_{s=1}^S\sum_{t=0}^m\gamma_t\Exp{\Vert G_{\eta}(w_t^{(s)})\Vert^2}  \leq  \frac{2}{\eta^2S\Sigma_m}\big[F(\widetilde{w}_0) - F^{\star}\big]  + \frac{3\sigma_n^2}{2\eta^2S}\sum_{s=1}^S\frac{(n-b_s)}{nb_s},
\end{equation*}
which is exactly \eqref{eq:mini_batch_bound}.

Now, let us choose $\eta := \frac{1}{2} \in (0, \frac{2}{3})$.
Then, we have $\delta = 1$, $\omega_{\eta} = \frac{3(n-\hat{b})}{2\hat{b}(n-1)}$, and $\Sigma_m \geq \frac{2\delta(m+1)}{L(\sqrt{2\omega_{\eta} m + 1} + 1)}$.
Using these facts,  $\widetilde{w}_T ~\sim \Ub_p\big(\sets{w_t^{(s)}}_{t=0\to m}^{s=1\to S}\big)$ with  $\Prob{\widetilde{w}_T = w_t^{(s)}} = p_{(s-1)m+t} := \frac{\gamma_t}{S\Sigma_m}$, and $b_s = n$, we obtain from \eqref{eq:mini_batch_bound} that
\begin{equation*}
\Exp{\norms{G_{\eta}(\widetilde{w}_T)}^2} = \frac{1}{S\Sigma_m}\sum_{s=1}^S\sum_{t=0}^m\gamma_t\Exp{\Vert G_{\eta}(w_t^{(s)})\Vert^2} \leq  \frac{4L(\sqrt{2\omega m + 1} + 1)}{S(m+1)}\big[F(\widetilde{w}_0) - F^{\star}\big].
\end{equation*}
Next, using $m = \lfloor\frac{n}{\hat{b}}\rfloor$ and $\omega := \omega_{\eta} = \frac{3(n-\hat{b})}{2\hat{b}(n-1)}$, if $\hat{b} \leq \sqrt{n}$, then we can bound
\begin{equation*}
\frac{\sqrt{2\omega m + 1} + 1}{m+1} \leq \frac{2\sqrt{\omega}}{\sqrt{m+1}} \leq \frac{\sqrt{6}}{\sqrt{n}}.
\end{equation*}
Using this bound, we can further bound the above estimate \nhanp{obtained from}
\eqref{eq:mini_batch_bound} \nhanp{as}
\begin{equation*}
\Exp{\norms{G_{\eta}(\widetilde{w}_T)}^2}  \leq  \frac{4\sqrt{6}L\left[F(\widetilde{w}_0) - F^{\star}\right]}{S\sqrt{n}},
\end{equation*}
which is \eqref{eq:grad_norm_bound1_c2}

To achieve $\Exp{\norms{G_{\eta}(\widetilde{w}_T)}^2} \leq \varepsilon^2$, we impose $\frac{4\sqrt{6}L\left[F(\widetilde{w}_0) - F^{\star}\right]}{S\sqrt{n}} = \varepsilon^2$, which shows that the number of outer iterations $S := \frac{4\sqrt{6}L\left[F(\widetilde{w}_0) - F^{\star}\right]}{\sqrt{n}\varepsilon^2}$.
To guarantee $S\geq 1$, we need $n \leq \frac{96L^2\left[F(\widetilde{w}_0) - F^{\star}\right]^2}{\varepsilon^4}$.

Hence, we can estimate the number of gradient evaluations $\Tc_{\mathrm{grad}}$ by 
\begin{equation*}
\Tc_{\mathrm{grad}} = Sn + 2S(m+1)\hat{b} \leq 5Sn  =  \frac{20\sqrt{6}L\sqrt{n}\left[F(\widetilde{w}_0) - F^{\star}\right]}{\varepsilon^2}.
\end{equation*}
We can conclude that the number of stochastic gradient evaluations does not exceed $\Tc_{\mathrm{grad}} = \BigO{\frac{L\sqrt{n}\left[F(\widetilde{w}_0) - F^{\star}\right]}{\varepsilon^2}}$.
The number of proximal operations $\prox_{\eta\psi}$ does not exceed $\Tc_{\prox} :=  S(m+1) \leq  \frac{4\sqrt{6}(\sqrt{n} + 1)L\left[F(\widetilde{w}_0) - F^{\star}\right]}{\hat{b}\varepsilon^2}$.
\Eproof

\beforesubsec
\subsection{The proof of Theorem~\ref{th:convergence_composite_finite_sum_b2}: The constant step-size case}\label{apdx:th:convergence_composite_finite_sum_b2}
If we choose $(\gamma_t, \eta_t) = (\gamma, \eta) > 0$ for all $t = 0, \cdots, m$, then, by applying Lemma~\ref{le:constant_stepsize}, we can update 
\begin{equation*}
\gamma := \frac{1}{L\sqrt{\omega m}}~~~\text{and}~~~\eta := \frac{2\sqrt{\omega m}}{4\sqrt{\omega m} + 1},
\end{equation*}
which is exactly \eqref{eq:mini_batch_step_size}, where $\omega :=  \frac{3(n-\hat{b}) }{2(n-1)\hat{b}}$.
With this update, we can simplify \eqref{eq:key_est2} as
\begin{equation*} 
\Exp{F(w_{m+1}^{(s)})} \leq \Exp{F(w_0^{(s)})} - \frac{\gamma\eta^2}{2}\sum_{t=0}^m\Exp{\Vert G_{\eta}(w_t^{(s)})\Vert^2} + \frac{3\gamma}{4}(m+1)\bar{\sigma}^{(s)}.
\end{equation*}
With the same argument as above, we obtain
\begin{equation*}
\frac{1}{(m+1)S}\sum_{s=1}^S\sum_{t=0}^m\Exp{\Vert G_{\eta}(w_t^{(s)})\Vert^2}  \leq  \frac{2}{\gamma\eta^2(m+1)S}\big[F(\widetilde{w}_0) - F^{\star}\big]  + \frac{3\sigma_n^2}{2\eta^2S}\sum_{s=1}^S\frac{(n-b_s)}{nb_s}.
\end{equation*}
For $\widetilde{w}_T ~\sim \Ub\big(\sets{w_t^{(s)}}_{t=0\to m}^{s=1\to S}\big)$ with $T := (m+1)S$ \nhanp{and $b_s = n$}, the last estimate implies
\begin{equation*}
\Exp{\Vert G_{\eta}(\widetilde{w}_T)\Vert^2 } = \frac{1}{(m+1)S}\sum_{s=1}^S\sum_{t=0}^m\Exp{\Vert G_{\eta}(w_t^{(s)})\Vert^2}  \leq  \frac{2}{\gamma\eta^2(m+1)S}\big[F(\widetilde{w}_0) - F^{\star}\big].
\end{equation*}
By the update rule of $\eta$ and $\gamma$, we can easily show that $\gamma\eta^2 \geq \frac{4\sqrt{\omega m}}{L(4\sqrt{\omega m} + 1)^2}$.
Therefore, using $m := \lfloor \frac{n}{\hat{b}} \rfloor$, we can overestimate
\begin{equation*}
\frac{1}{\gamma\eta^2(m+1)} \leq \frac{L(4\sqrt{\omega m} + 1)^2}{4\sqrt{\omega m}(m+1)} \leq \frac{8L\sqrt{\omega}}{\sqrt{m}} \leq \frac{8\sqrt{3}L}{\sqrt{2n}}.
\end{equation*}
Using this upper bound, to guarantee $\Exp{\Vert G_{\eta}(\widetilde{w}_T)\Vert^2 } \leq \varepsilon^2$, we choose $S$ and $m$ such that $\frac{16\sqrt{3}L}{S\sqrt{2n}}\big[F(\widetilde{w}_0) - F^{\star}\big] = \varepsilon^2$, which leads to $S := \frac{16\sqrt{3}L}{\sqrt{2n}\varepsilon^2}\big[F(\widetilde{w}_0) - F^{\star}\big]$ as the number of outer iterations.
To guarantee $S \geq 1$, we need to choose $n \leq  \frac{384L^2}{\varepsilon^4}\big[F(\widetilde{w}_0) - F^{\star}\big]^2$.

Finally, we can estimate the number of stochastic gradient evaluations $\Tc_{\mathrm{grad}}$ as
\begin{equation*}
\Tc_{\mathrm{grad}} = Sn + 2S(m+1) \leq 5Sn = \frac{16\sqrt{3}L\sqrt{n}}{\sqrt{2}\varepsilon^2}\big[F(\widetilde{w}_0) - F^{\star}\big] = \BigO{ \frac{L\sqrt{n}}{\varepsilon^2}\big[F(\widetilde{w}_0) - F^{\star}\big]}.
\end{equation*}
The number of $\prox_{\eta\psi}$ is $\Tc_{\prox} = S(m+1) \leq \frac{16\sqrt{3}L(\sqrt{n}+1)}{\hat{b}\sqrt{2}\varepsilon^2}\big[F(\widetilde{w}_0) - F^{\star}\big]$.
\Eproof

\beforesubsec
\subsection{The proof of Theorem~\ref{th:convergence_composite_expectation}: The expectation problem}\label{apdx:th:convergence_composite_expectation}
\aftersubsec
Summing up \eqref{eq:key_est2} from $s=1$ to $s = S$, using $w_0^{(0)} = \widetilde{w}_0$, and ignoring the nonnegative term $\Exp{\sigma_t^{(s)}}$, we obtain
\begin{equation}\label{eq:thm33_est1} 
\frac{\gamma\eta^2}{2}\sum_{s=1}^S \sum_{t=0}^m \Exp{\Vert G_{\eta}(w_t^{(s)})\Vert^2} \leq  F(\widetilde{w}_0) - \Exp{F(w_{m+1}^{(S)})} + \frac{\gamma\theta(m+1)}{2}\sum_{s=1}^S\bar{\sigma}^{(s)}.
\end{equation}
Note that $\Exp{F(w_{m+1}^{(S)})} \geq F^{\star}$ by Assumption~\ref{as:A1}.
Moreover, by \eqref{eq:mini_batch_est1}, we have
\begin{equation*}
\bar{\sigma}^{(s)} := \Exp{\norms{v_0^{(s)} - \nabla{f}(w_0^{(s)})}^2} = \Exp{\norms{\widetilde{\nabla}f_{\Bc_s}(w_0^{(s)}) - \nabla{f}(w_0^{(s)})}^2} \leq \frac{\sigma^2}{b_s} = \frac{\sigma^2}{b}.
\end{equation*}
Let us fix $c_t = r_t = 1$ in Lemma~\ref{le:constant_stepsize}. 
Moreover, $\rho := \frac{1}{\hat{b}}$.
Therefore, we have $\theta = 1 + \frac{8\bar{\omega} m}{(1+4\sqrt{\bar{\omega} m})^2} < \frac{3}{2}$, where $\bar{\omega} := \frac{3}{2\hat{b}}$.
Using these estimates into \eqref{eq:thm33_est1}, we obtain \eqref{eq:key_est6}.

Now, since  $\widetilde{w}_T ~\sim \Ub\big(\sets{w_t^{(s)}}_{t=0\to m}^{s=1\to S}\big)$ for $T := S(m+1)$, we have
\begin{align*} 
\Exp{\Vert G_{\eta}(\widetilde{w}_T)\Vert^2} &=  \frac{1}{(m+1)S}\sum_{s=1}^S\sum_{t=0}^m \Exp{\Vert G_{\eta}(w_t^{(s)})\Vert^2} \nonumber\\
& \leq \frac{2}{\gamma\eta^2(m+1)S}[F(\widetilde{w}_0) - F^{\star}] + \frac{3 \sigma^2}{2\eta^2b}.
\end{align*}
Since $\eta = \frac{2\sqrt{\bar{\omega} m}}{4\sqrt{\bar{\omega} m} + 1} \geq  \frac{2}{5}$ and $\frac{1}{\gamma\eta^2(m+1)} \leq \frac{25L\sqrt{\bar{\omega}m}}{4(m+1)} \leq \frac{8L}{\sqrt{\hat{b}m}}$ as proved above, to guarantee $\Exp{\Vert G_{\eta}(\widetilde{w}_T)\Vert^2} \leq \varepsilon^2$, we need to set 
\begin{equation*}
\frac{16L}{S\sqrt{\hat{b}m}}[F(\widetilde{w}_0) - F^{\star}] + \frac{75\sigma^2}{8b} = \varepsilon^2.
\end{equation*}
Let us choose $b$ such that  $\frac{75\sigma^2}{8b} = \frac{\varepsilon^2}{2}$, which leads to $b := \frac{75\sigma^2}{8\varepsilon^2}$.
We also choose $m := \frac{\sigma^2}{\hat{b}\varepsilon^2}$.
To guarantee $m \geq 1$, we have $\hat{b} \leq \frac{\sigma^2}{\varepsilon^2}$.
Then, since $\frac{1}{\sqrt{\hat{b}m}} = \frac{\varepsilon}{\sigma}$, the above condition is equivalent to $\frac{16L\varepsilon}{S\sigma}[F(\widetilde{w}_0) - F^{\star}] = \frac{\varepsilon^2}{2}$, which leads to 
\begin{equation*}
S := \frac{32L}{\sigma\varepsilon}[F(\widetilde{w}_0) - F^{\star}].
\end{equation*}
To guarantee $S \geq 1$, we need to choose $\varepsilon \leq   \frac{32L}{\sigma}[F(\widetilde{w}_0) - F^{\star}]$ if $\sigma$ is sufficiently large.

Now, we estimate the total number of stochastic gradient evaluations as 
\begin{equation*}
\begin{array}{ll}
\Tc_{\mathrm{grad}} &=  \sum_{s=1}^Sb_s + 2m\hat{b}S = (b + 2m\hat{b})S = \frac{32L}{\sigma\varepsilon}[F(\widetilde{w}_0) - F^{\star}]\left(\frac{75\sigma^2}{\varepsilon^2} + \frac{2\sigma^2}{\hat{b}\varepsilon^2}\hat{b}\right) \vspace{1ex}\\
&= \frac{2464 L\sigma}{\varepsilon^3}[F(\widetilde{w}_0) - F^{\star}].
\end{array}
\end{equation*}
Hence, the number of gradient evaluations is $\BigO{ \frac{ L\sigma [F(\widetilde{w}_0) - F^{\star}] }{\varepsilon^3} }$, and the number of proximal operator calls is also $\Tc_{\prox} := S(m+1) = \frac{32\sigma L}{\hat{b}\varepsilon^2}[F(\widetilde{w}_0) - F^{\star}]$.
\Eproof

\beforesec
\section{The proof of Theorem \ref{thm:noncomposite_convergence}: The non-composite cases}\label{apdx:thm:noncomposite_convergence}
\aftersec
Since $\psi = 0$, we have $\widehat{w}_{t+1}^{(s)} = w_t^{(s)} - \eta_tv_t^{(s)}$.
Therefore, $\widehat{w}_{t+1}^{(s)} - w_t^{(s)} = - \eta_tv_t^{(s)}$ and $w_{t+1}^{(s)} = (1-\gamma_t)w_t^{(s)} + \gamma_t\widehat{w}_{t+1}^{(s)} = w_t^{(s)} - \gamma_t\eta_tv_t^{(s)} = w_t^{(s)} - \hat{\eta}_tv_t^{(s)}$, where $\hat{\eta}_t := \gamma_t\eta_t$.
Using these relations and choose $c_t = \frac{1}{\eta_t}$, we can easily show that
\begin{equation*}
\left\{\begin{array}{ll}
&\Exp{\Vert\widehat{w}_{t+1}^{(s)} - w_t^{(s)}\Vert^2} = \eta_t^2\Exp{\Vert v_t^{(s)}\Vert^2}, \vspace{1ex}\\
&\sigma_t^{(s)} := \frac{\gamma_t}{2c_t}\Vert \nabla{f}(w_t^{(s)}) - v_t^{(s)} - c_t(\widehat{w}_{t+1}^{(s)} - w_t^{(s)})\Vert^2 = \frac{\hat{\eta}_t}{2}\Vert \nabla{f}(w_t^{(s)})\Vert^2.
\end{array}\right.
\end{equation*}
Substituting these estimates into \eqref{eq:lm31_est6} and noting that $f = F$ and $\hat{\eta}_t := \gamma_t\eta_t$, we obtain
\begin{equation}\label{eq:co41_est1} 
\begin{array}{ll}
\Exp{f(w_{t+1}^{(s)})} &\leq \Exp{f(w_t^{(s)})} + \frac{\hat{\eta}_t}{2}\Exp{\Vert\nabla{f}(w_t^{(s)}) - v_t^{(s)}\Vert^2} \vspace{1ex}\\
& - {~} \frac{\hat{\eta}_t}{2}\big(1 - L\hat{\eta}_t\big) 
\Exp{\Vert v_t^{(s)}\Vert^2} - \frac{\hat{\eta}_t}{2}\Exp{\Vert \nabla{f}(w_t^{(s)})\Vert^2}.
\end{array}
\end{equation}
On the other hand, from \eqref{eq:biased_sum}, by Assumption~\ref{as:A2}, \eqref{eq:sarah_estimator}, and $w_{t+1}^{(s)} := w_t^{(s)} - \hat{\eta}_tv_t^{(s)}$, we can derive
\begin{equation*} 
\begin{array}{ll}
\Exp{\norms{\nabla{f}(w_t^{(s)}) - v_t^{(s)}}^2} &\leq \Exp{\norms{\nabla{f}(w_0^{(s)}) - v_0^{(s)}}^2} + \sum_{j=1}^{t}\Exp{\norms{v_j^{(s)} - v_{j-1}^{(s)}}^2} \vspace{1ex}\\
&\leq \Exp{\norms{\nabla{f}(w_0^{(s)}) - v_0^{(s)}}^2} \vspace{1ex}\\
& + {~}  \rho \sum_{j=1}^{t}\Exp{\norms{\nabla_w{f}(w_j^{(s)};\xi_j^{(s)}) - \nabla_w{f}(w_{j-1}^{(s)}; \xi_j^{(s)}) }^2} \vspace{1ex}\\
&\leq \Exp{\norms{\nabla{f}(w_0^{(s)}) - v_0^{(s)}}^2} +  \rho L^2\sum_{j=1}^{t}\Exp{\norms{w_j^{(s)} -  w_{j-1}^{(s)} }^2} \vspace{1ex}\\
&\leq \Exp{\norms{\nabla{f}(w_0^{(s)}) - v_0^{(s)}}^2} +  \rho L^2\sum_{j=1}^{t}\hat{\eta}_{j-1}^2\Exp{\norms{v_{j-1}^{(s)} }^2},
\end{array}
\end{equation*}
where $\rho := \frac{1}{\hat{b}}$ if Algorithm~\ref{alg:prox_sarah} solves \eqref{eq:sopt_prob} and $\rho := \frac{n-\hat{b}}{\hat{b}(n-1)}$ if Algorithm~\ref{alg:prox_sarah} solves \eqref{eq:finite_sum}.

Substituting this estimate into \eqref{eq:co41_est1}, and summing up the result from $t=0$ to $t=m$, we eventually get
\begin{align}\label{eq:co41_est2}
{\!\!}\Exp{f(w_{m+1}^{(s)})} &\leq  \Exp{f(w_0^{(s)})}   - \displaystyle\sum_{t=0}^m\frac{\hat{\eta}_t}{2}\Exp{\Vert \nabla{f}(w_t^{(s)})\Vert^2}  + \frac{1}{2}\Big(\sum_{t=0}^m\hat{\eta}_t\Big)\Exp{\norms{\nabla{f}(w_0^{(s)}) - v_0^{(s)}}^2} \nonumber\\
&+ \frac{\rho L^2}{2}\displaystyle\sum_{t=0}^m\hat{\eta}_t\displaystyle\sum_{j=1}^{t}\hat{\eta}_{j-1}^2\Exp{\Vert v_{j-1}^{(s)}\Vert^2} - \displaystyle\sum_{t=0}^m\frac{\hat{\eta}_t(1 - L\hat{\eta}_t)}{2} \Exp{\Vert v_t^{(s)}\Vert^2}. {\!\!}
\end{align}
Our next step is to choose $\hat{\eta}_t$ such that 
\begin{equation*}
\rho L^2\displaystyle\sum_{t=0}^m\hat{\eta}_t\displaystyle\sum_{j=1}^{t}\hat{\eta}_{j-1}^2\Exp{\Vert v_{j-1}^{(s)}\Vert^2} - \displaystyle\sum_{t=0}^m\hat{\eta}_t(1 - L\hat{\eta}_t) \Exp{\Vert v_t^{(s)}\Vert^2} \leq 0.
\end{equation*}
This condition can be rewritten explicitly as
\begin{equation*}
\begin{array}{ll}
&\big[ \rho L^2\hat{\eta}_0^2(\hat{\eta}_1+\cdots + \hat{\eta}_m) - \hat{\eta}_0(1-L\hat{\eta}_0)\big]\Exp{\Vert v_0^{(s)}\Vert^2} \vspace{1ex}\\
& + {~} \big[ \rho L^2\hat{\eta}_1^2(\hat{\eta}_2 + \cdots + \hat{\eta}_m)  - \hat{\eta}_1(1-L\hat{\eta}_1)\big]\Exp{\Vert v_1^{(s)}\Vert^2} + \cdots \vspace{1ex}\\
&+ {~} \big[ \rho L^2\hat{\eta}_{m-1}^2\hat{\eta}_m - \hat{\eta}_{m-1}(1-L\hat{\eta}_{m-1})\big]\Exp{\Vert v_{m-1}^{(s)}\Vert^2} -  \hat{\eta}_m(1 - L\hat{\eta}_m)\Exp{\Vert v_m^{(s)}\Vert^2} \leq 0.
\end{array}
\end{equation*}
Similar to \eqref{eq:key_cond}, to guarantee the last inequality, we impose the following conditions
\begin{equation}\label{eq:stepsize_conds}
\left\{\begin{array}{ll}
-\hat{\eta}_m(1- L\hat{\eta}_m) &\leq 0, \vspace{1ex}\\
\rho L^2\hat{\eta}_t^2\sum_{j=t+1}^m\hat{\eta}_j - \hat{\eta}_0(1-L\hat{\eta}_0) &\leq 0.
\end{array}\right.
\end{equation}
Applying Lemma~\ref{eq:key_cond} (a) with $\nu = \rho$ and $\delta = 1$, we obtain
\begin{equation*}
\hat{\eta}_m = \frac{1}{L},~~\text{and}~~\hat{\eta}_{m-t} := \frac{1}{L\big(1 + \rho L\sum_{j=1}^t\hat{\eta}_{m-j+1}\big)},~~~\forall t=1,\cdots, m,
\end{equation*}
which is exactly \eqref{eq:adaptive_stepsize}.
With this update, we have $\frac{1}{L(1+\rho m)} < \hat{\eta}_0 < \hat{\eta}_1 < \cdots < \hat{\eta}_m$ and $\Sigma_m \geq \frac{2(m+1)}{L(\sqrt{2\rho m + 1} + 1)}$.

Using the update \eqref{eq:adaptive_stepsize}, we can simplify \eqref{eq:co41_est2} as follows:
\begin{equation*} 
{\!\!\!}\begin{array}{ll}
\Exp{f(w_{m+1}^{(s)})} \leq  \Exp{f(w_0^{(s)})}   - \displaystyle\sum_{t=0}^m\frac{\hat{\eta}_t}{2}\Exp{\Vert \nabla{f}(w_t^{(s)})\Vert^2}  + \frac{\sum_{t=0}^m\hat{\eta}_t}{2}\Exp{\norms{\nabla{f}(w_0^{(s)}) - v_0^{(s)}}^2}.
\end{array}{\!\!\!}
\end{equation*}
Let us define $\hat{\sigma}_s := \Exp{\norms{\nabla{f}(w_0^{(s)}) - v_0^{(s)}}^2}$ and noting that $f^{\star} := F^{\star} \leq \Exp{f(w_{m+1}^{(S)})}$ and $\widetilde{w}_0 := w_0^{(0)}$. 
Summing up the last inequality from $s=1$ to $S$ and using these relations, we can further derive
\begin{equation*} 
\sum_{s=1}^S\sum_{t=0}^m\hat{\eta}_t\Exp{\Vert \nabla{f}(w_t^{(s)})\Vert^2}  \leq 2\big[ f(\widetilde{w}_0) - f^{\star}\big] +   \Big(\sum_{t=0}^m\hat{\eta}_t\Big) \sum_{s=1}^S\hat{\sigma}_s.
\end{equation*}
Using the lower bound of $\Sigma_m$ as $\Sigma_m \geq \frac{2(m+1)}{L(\sqrt{2\rho m + 1} + 1)}$, the above inequality leads to
\begin{equation}\label{eq:co41_est2b}
\frac{1}{S\Sigma_m}\sum_{s=1}^S\sum_{t=0}^m\hat{\eta}_t\Exp{\Vert \nabla{f}(w_t^{(s)})\Vert^2}  \leq \frac{(\sqrt{2\rho m+1}+1)L}{S(m+1)}\big[ f(\widetilde{w}_0) - f^{\star}\big] +  \frac{1}{S}\sum_{s=1}^S\hat{\sigma}_s.
\end{equation}
Since $\Prob{\widetilde{w}_T = w_t^{(s)}} = p_{(s-1)m+t}$ with $p_{(s-1)m+t} = \frac{\hat{\eta}_t}{S\Sigma_m}$ for $s=1,\cdots, S$ and $t = 0,\cdots, m$, we have
\begin{equation*}
\Exp{\Vert \nabla{f}(\widetilde{w}_T)\Vert^2} = \frac{1}{S\Sigma_m}\sum_{s=1}^S\sum_{t=0}^m\hat{\eta}_t\Exp{\Vert \nabla{f}(w_t^{(s)})\Vert^2}.
\end{equation*}
Substituting this estimate into \eqref{eq:co41_est2b}, we obtain \eqref{eq:key_est11}.

Now, we consider two cases:

\noindent\textbf{Case (a):}
If we apply this algorithm variant to solve the non-composite finite-sum problem of \eqref{eq:finite_sum} $($i.e. $\psi = 0$$)$ using the full-gradient snapshot for the outer-loop with $b_s = n$, then $v_0^{(s)} = \nabla{f}(w^{(s)}_0)$, which leads to $\hat{\sigma}_s = 0$.
By the choice of epoch length $m = \lfloor \frac{n}{\hat{b}}\rfloor$ and $\hat{b} \leq \sqrt{n}$, we have $\frac{\sqrt{2\rho m + 1} + 1}{m+1} \leq \frac{2}{\sqrt{n}}$.
Using these facts into \eqref{eq:key_est11}, we obtain
\begin{equation*}
\Exp{\Vert \nabla{f}(\widetilde{w}_T)\Vert^2} \leq \frac{2L}{S\sqrt{n}}\big[ f(\widetilde{w}_0) - f^{\star}\big],
\end{equation*}
which is exactly \eqref{eq:key_est11a}.

To achieve $\Exp{\Vert \nabla{f}(\widetilde{w}_T)\Vert^2} \leq \varepsilon^2$, we impose $\frac{2L}{S\sqrt{n}}\big[ f(\widetilde{w}_0) - f^{\star}\big] = \varepsilon^2$.
Hence, the maximum number of outer iterations is at most $S = \frac{2L}{\sqrt{n}\varepsilon^2}[f(\widetilde{w}_0) - f^{\star}]$.
The number of gradient evaluations $\nabla{f_i}$ is at most $\Tc_{\mathrm{grad}} := nS + 2(m+1)\hat{b}S \leq 5nS =  \frac{10L\sqrt{n}}{\varepsilon^2}[f(\widetilde{w}_0) - f^{\star}]$.

\vspace{1ex}
\noindent\textbf{Case (b):} 
Let us apply this algorithm variant to solve the non-composite expectation problem of \eqref{eq:sopt_prob} $($i.e. $\psi = 0$$)$.
Then, by using $\rho := \frac{1}{\hat{b}}$ and $\hat{\sigma}_s :=  \Exp{\norms{\nabla{f}(w_0^{(s)}) - v_0^{(s)}}^2} \leq \frac{\sigma^2}{b_s} = \frac{\sigma^2}{b}$, we have from \eqref{eq:key_est11} that
\begin{equation*}
\Exp{\Vert \nabla{f}(\widetilde{w}_T)\Vert^2} \leq \frac{2L}{S\sqrt{\hat{b}m}}\big[ f(\widetilde{w}_0) - f^{\star}\big] + \frac{\sigma^2}{b}.
\end{equation*}
This is exactly \eqref{eq:key_est11b}.
Using the mini-batch $b := \frac{2\sigma^2}{\varepsilon^2}$ for the outer-loop and $m := \frac{\sigma^2}{\hat{b}\varepsilon^2}$, 
we can show that the number of outer iterations $S := \frac{4L}{\sigma\varepsilon}\big[ f(\widetilde{w}_0) - f^{\star}\big]$.
The number of stochastic gradient evaluations is at most $\Tc_{\mathrm{grad}} := Sb + 2S(m+1)\hat{b} =  \frac{4S\sigma^2}{\varepsilon^2} = \frac{16L\sigma}{\varepsilon^3}\big[ f(\widetilde{w}_0) - f^{\star}\big]$.
This holds if $ \frac{2\sigma^2}{\varepsilon^2} \leq \frac{4S\sigma^2}{\varepsilon^2} = \frac{16L\sigma}{\varepsilon^3}\big[ f(\widetilde{w}_0) - f^{\star}\big]$ leading to $\sigma \leq \frac{8L}{\varepsilon}\big[ f(\widetilde{w}_0) - f^{\star}\big]$.
\Eproof
\bibliographystyle{unsrtnat}

\end{document}